\newtheorem{teore}{Theorem}[section]
\newtheorem{obs}[teore]{Remark}
\newtheorem{defi}{Definition}[section]
\newtheorem{coro}[teore]{Corollary}
\newtheorem{lem}[teore]{Lemma}
\newtheorem{theorem}{}
\newcommand{\E}{\mathcal{E}}
\newcommand{\ka}{\kappa}
\newcommand{\R}{\mathbb{R}}
\newcommand{\N}{\mathbb{N}}
\newcommand{\C}{\mathbb{C}}
\newcommand{\D}{\mathcal{D}}
\newcommand{\Dz}{\dot{\mathbf{H}}^{1}(\R^{6})}
\newcommand{\psib}{\mbox{\boldmath$\psi$}}
\newcommand{\ub}{\mathbf{u}}
\newcommand{\vb}{\mathbf{v}}
\newcommand{\zb}{\mathbf{z}}
\numberwithin{equation}{section}
\title[Blow-up solutions for Schr\"{o}dinger systems ]{Blow-up solutions for a system of Schr\"{o}dinger equations with general quadratic-type nonlinearities in dimensions five and six}
\author[N. Noguera]{Norman Noguera}
\address{SM-UCR, Ciudad Universitaria Carlos Monge Alfaro, Departamento de Ciencias Naturales, Apdo: 111-4250, San Ram\'on, Alajuela, Costa Rica}
\email{norman.noguera@ucr.ac.cr}
\author[A. Pastor]{Ademir Pastor}
\address{IMECC-UNICAMP, Rua S\'ergio Buarque de Holanda, 651, 13083-859, Cam\-pi\-nas-SP, Bra\-zil}
\email{apastor@ime.unicamp.br}
\subjclass[2010]{35Q55, 35B44 35A01, 35J50.}
\keywords{Nonlinear Schr\"{o}dinger system; Quadratic-type interactions; Ground state solutions;  Blow-up; Energy-critical.}
\begin{document}
	
\begin{abstract}
In this work, we show the existence of ground state solutions for an $l$-component system of non-linear Schr\"{o}dinger equations with quadratic-type growth interactions in the energy-critical case.  They are obtained analyzing a critical Sobolev-type inequality and using the concentration-compactness method. As an application, we prove the existence of  blow-up  solutions of the system without the mass-resonance condition in  dimension six (and five), when the  initial data is radial. 
\end{abstract}

\maketitle

\tableofcontents

\section{Introduction}

This paper is concerned with the study of the following initial-value problem 
\begin{equation}\label{system1}
\begin{cases}
\displaystyle i\alpha_{k}\partial_{t}u_{k}+\gamma_{k}\Delta u_{k}-\beta_{k} u_{k}+f_{k}(u_{1},\ldots,u_{l})=0,\\
(u_{1}(x,0),\ldots,u_{l}(x,0))=(u_{10},\ldots,u_{l0}),\qquad k=1,\ldots l,
\end{cases}
\end{equation}
where $u_{k}:\R^{n}\times \R\to \C$, $(x,t)\in \R^{n}\times \R$, $\Delta$ is the Laplacian operator, $\alpha_{k}, \gamma_{k}>0$, $\beta_{k}\geq0$ are real constants and  the nonlinearities $f_{k}$ have a quadratic-type growth. 

Multi-component Schr\"odinger systems with quadratic interactions arise in many physical situations, for instance, in fiber and waveguide nonlinear optics (see \cite{Kiv} for a review and applications). Such systems may be obtained, for instance, by using the so-called multistep cascading mechanism. In particular, multistep cascading can be achieved by second-order nonlinear processes such as second harmonic generation (SHG) and sum-frequency mixing (SFM) (see \cite{KoSa}). An example of a three-step cascading model is
\begin{equation}\label{system1A}
\begin{cases}
\displaystyle 2i\partial_{t}w+\Delta w-\beta w=-\frac{1}{2}(u^{2}+v^{2}),\\
\displaystyle i\partial_{t}v+\Delta v-\beta_{1} v=-\overline{v}w,\\
\displaystyle i\partial_{t}u+\Delta u-u=- \overline{u}w,
\end{cases}
\end{equation}
which represents, in dimensionless variables, the reduced
amplitude equations of a fundamental beam with frequency $\omega$
entering a nonlinear medium with a quadratic response, derived in a slowly varying envelope approximation with the assumption of zero absorption of all interacting waves. Here $\beta, \beta_{1}\geq0$ are real constants and functions, $u,v$, and $w$ represents the associated polarizations. Another example is given by
\begin{equation}\label{system1B2}
\begin{cases}
\displaystyle i\partial_{t}w+\Delta w- w=-(\overline{w}v+\overline{v}u),\\
\displaystyle 2i\partial_{t}v+\Delta v-\beta v=-\left(\frac{1}{2}w^{2}+ \overline{w}u\right),\\
\displaystyle 3i\partial_{t}u+\Delta u-\beta_{1}u=-  vw,
\end{cases}
\end{equation}
where $w$, $v$, and $u$ represent, in dimensionless variables, the complex electric fields envelopes of the fundamental harmonic, second harmonic, and third harmonic, respectively (see \cite{Kiv} for details). A model formally appearing as a non-relativistic version  of some Klein-Gordon system, when the speed of light  tends to infinity is given, in dimensionless variables, by (see \cite{Hayashi})
\begin{equation}\label{system1J}
\begin{cases}
\displaystyle i\partial_{t}u+\Delta u=-2\overline{u}v,\\
\displaystyle i\partial_{t}v+\kappa\Delta v=- u^{2},
\end{cases}
\end{equation}
where $\kappa$ is a real constant. Similar systems can also be rigorously derived as a model in $\chi^{(2)}$ media (see \cite{Colin2}). 

From the mathematical point of view the interest in nonlinear Schr\"odinger systems with quadratic interactions has been increased in the past few years (see \cite{Colin}, \cite{Colin2}, \cite{Hamano}, \cite{Hayashi3}, \cite{Hayashi}, \cite{inui2018blow}, \cite{kishimoto2018scattering}, \cite{iwabuchi2016ill}, \cite{NoPa}, \cite{NoPa2}, \cite{Pastor2}, \cite{ozawa2013small} and references therein). So, in \cite{NoPa2} we initiated the study of system \eqref{system1} with  general quadratic-type nonlinearities. More precisely, we assumed the following (with a slight modification in \ref{H4}).
\renewcommand\thetheorem{(H1)}
\begin{theorem}\label{H1}
 \begin{align*}
f_{k}(0,\ldots,0)=0, \qquad  k=1,\ldots,l. 
\end{align*}
\end{theorem}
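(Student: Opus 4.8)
The statement labeled (H1) is not a proposition to be established by argument; it is the first of the structural hypotheses that the paper imposes on the interaction terms $f_{1},\ldots,f_{l}$. Accordingly, a "proof proposal" here amounts to explaining why $f_{k}(0,\ldots,0)=0$ is the correct normalization and verifying that it is satisfied by the motivating models. The plan is therefore: (i) record the role of the condition at the level of the abstract system \eqref{system1}; and (ii) check it directly in the concrete systems displayed above.

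First, I would read (H1) as a consistency requirement guaranteeing that the trivial state $(u_{1},\ldots,u_{l})=(0,\ldots,0)$ is an equilibrium of the flow generated by \eqref{system1}. This is precisely the background relative to which solitary waves, ground-state solutions and blow-up profiles are defined in the rest of the paper, and it makes the variational framework well posed (the energy functional and the Nehari/Pohozaev-type functionals vanish at the origin). Any additive constant appearing in $f_{k}$ could in any case be absorbed into the linear term $-\beta_{k}u_{k}$ after a shift, so imposing (H1) costs no generality. Second, I would verify the condition in the examples: in \eqref{system1J} one has $f$-components $-2\overline{u}v$ and $-u^{2}$; in \eqref{system1A} the components $-\tfrac12(u^{2}+v^{2})$, $-\overline{v}w$, $-\overline{u}w$; and in \eqref{system1B2} the components $-(\overline{w}v+\overline{v}u)$, $-(\tfrac12 w^{2}+\overline{w}u)$, $-vw$. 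Each of these is a finite linear combination of quadratic monomials in the unknowns and their complex conjugates; every such monomial is homogeneous of degree two and hence vanishes when all arguments are set to zero, so $f_{k}(0,\ldots,0)=0$ for every $k$, as required.

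There is no genuine analytic obstacle here; the only point requiring care is bookkeeping, namely checking that (H1) is compatible with the later hypotheses on $f_{k}$ — in particular with homogeneity of degree two, with the gauge and phase invariances, and with the Hamiltonian structure (the $f_{k}$ arising, up to the weights $\alpha_{k}$, as derivatives of a smooth potential). Compatibility is automatic: any smooth potential whose associated $f_{k}$ are quadratic forces $f_{k}(0)=0$, so (H1) is simply the benign base case of the standing assumptions and is inherited by all the systems treated in the paper.
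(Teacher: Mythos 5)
Your reading is correct: (H1) is a standing hypothesis on the nonlinearities, not a theorem, and the paper offers no proof beyond the remark that assumptions (H1)--(H8) hold for the displayed examples \eqref{system1A}, \eqref{system1B2}, and \eqref{system1J}, whose components are quadratic monomials vanishing at the origin. Your verification matches what the paper implicitly relies on, so there is nothing to add.
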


\renewcommand\thetheorem{(H2)}
\begin{theorem}\label{H2}
 There exists a constant $C>0$ such that for $(z_{1},\ldots,z_{l}),(z_{1}',\ldots,z_{l}')\in \C^{l}$ we have 
\begin{equation*}
\begin{split}
\left|\frac{\partial }{\partial z_{m}}[f_{k}(z_{1},\ldots,z_{l})-f_{k}(z_{1}',\ldots,z_{l}')]\right|&\leq C\sum_{j=1}^{l}|z_{j}-z_{j}'|,\qquad k,m=1,\ldots,l;\\
\left|\frac{\partial }{\partial \overline{z}_{m}}[f_{k}(z_{1},\ldots,z_{l})-f_{k}(z_{1}',\ldots,z_{l}')]\right|&\leq C\sum_{j=1}^{l}|z_{j}-z_{j}'|,\qquad k,m=1,\ldots,l.
\end{split}
\end{equation*}
\end{theorem}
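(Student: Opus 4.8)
The statement \ref{H2} is one of the standing structural hypotheses on the nonlinearities rather than an assertion with an independent proof; what actually has to be checked is that the class we wish to treat — the \emph{quadratic-type} nonlinearities, i.e.\ those built from quadratic monomials in the $2l$ variables $z_{1},\dots,z_{l},\overline{z}_{1},\dots,\overline{z}_{l}$ (so that \ref{H1} holds automatically and there is no linear part) — does satisfy the displayed Lipschitz-type estimate, and to identify the constant $C$. The plan is to reduce the verification to elementary algebra. First I would write each $f_{k}$ in the form $f_{k}(z_{1},\dots,z_{l})=\sum_{p,q} a^{k}_{pq}\,\zeta_{p}\zeta_{q}$, where $(\zeta_{1},\dots,\zeta_{2l})=(z_{1},\dots,z_{l},\overline{z}_{1},\dots,\overline{z}_{l})$ and the $a^{k}_{pq}$ are fixed complex coefficients; the physically relevant models \eqref{system1A}, \eqref{system1B2} and \eqref{system1J} are exactly of this type, with coefficients $\tfrac12$, $1$, $2$, and so on.

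Next I would differentiate. Because each $f_{k}$ is homogeneous of degree two in $(\zeta_{1},\dots,\zeta_{2l})$, the Wirtinger derivatives $\partial f_{k}/\partial z_{m}$ and $\partial f_{k}/\partial \overline{z}_{m}$ are \emph{linear} maps $\C^{2l}\to\C$, say $\partial f_{k}/\partial z_{m}=\sum_{j=1}^{l}\bigl(b^{km}_{j}z_{j}+c^{km}_{j}\overline{z}_{j}\bigr)$ with the $b^{km}_{j},c^{km}_{j}$ linear combinations of the $a^{k}_{pq}$. Interpreting $\frac{\partial}{\partial z_{m}}[f_{k}(z)-f_{k}(z')]$ as $\frac{\partial f_{k}}{\partial z_{m}}(z)-\frac{\partial f_{k}}{\partial z_{m}}(z')$ and subtracting, the expression becomes $\sum_{j}\bigl(b^{km}_{j}(z_{j}-z_{j}')+c^{km}_{j}(\overline{z}_{j}-\overline{z}_{j}')\bigr)$, whose modulus is at most $\bigl(\max_{k,m,j}(|b^{km}_{j}|+|c^{km}_{j}|)\bigr)\sum_{j=1}^{l}|z_{j}-z_{j}'|$; the antiholomorphic derivatives are handled identically and with the same constant. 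This proves \ref{H2} with $C$ the indicated maximum over the finitely many coefficients.

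There is no real analytic obstacle here: the estimate holds globally on $\C^{l}$ precisely because the second-order derivatives of a degree-two polynomial are constant, which is the defining feature of the quadratic-type class. The only points that require a little care are (i) reading $\frac{\partial}{\partial z_{m}}[\,\cdot\,]$ correctly, as the difference of a single partial derivative evaluated at two points, and (ii) checking that the Wirtinger calculus is applied symmetrically, so that the holomorphic and antiholomorphic lines of \ref{H2} genuinely share one constant $C$, uniform in $k,m$. Once these are settled, the same computation also yields the companion bounds on $f_{k}$ itself and on $\nabla f_{k}$ that feed the contraction-mapping well-posedness argument.
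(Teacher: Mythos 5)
You have correctly identified that \ref{H2} is a standing hypothesis on the nonlinearities, not a theorem the paper proves; the paper simply asserts (without written verification) that \ref{H1}--\ref{H8} hold for the model systems \eqref{system1A}, \eqref{system1B2}, \eqref{system1J}. Your verification — writing each $f_k$ as a homogeneous quadratic in $(z_1,\dots,z_l,\overline z_1,\dots,\overline z_l)$, noting that the Wirtinger derivatives are then homogeneous linear forms so that the difference at two points is controlled by $\max_{k,m,j}(|b^{km}_j|+|c^{km}_j|)\sum_j|z_j-z_j'|$ (using $|\overline z_j-\overline z_j'|=|z_j-z_j'|$) — is correct and is exactly the elementary computation one would supply.
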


\renewcommand\thetheorem{(H3)}
\begin{theorem}\label{H3}
There exists a function $F:\C^{l}\to \C$ such that    
\begin{equation*}
f_{k}(z_{1},\ldots,z_{l})=\frac{\partial F}{\partial \overline{z}_{k}}(z_{1},\ldots,z_{l})+\overline{\frac{\partial F }{\partial z_{k}}}(z_{1},\ldots,z_{l}),\qquad k=1\ldots,l. 
\end{equation*}
\end{theorem}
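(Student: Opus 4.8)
Since (H3) is posed as a structural hypothesis rather than a consequence of (H1)--(H2), the pertinent task is to show that it is non-vacuous: to construct the potential $F$ under a natural compatibility condition on the $f_{k}$, and to verify that condition for the physical models \eqref{system1A}, \eqref{system1B2}, \eqref{system1J}. My plan is to reformulate (H3) as an exactness (``curl-free'') problem for a vector field on $\R^{2l}$ and to solve it by the Poincar\'e lemma, with an explicit homotopy formula.

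First I would reduce to a real potential. Writing $F=\mathrm{Re}\,F+i\,\mathrm{Im}\,F$ and using $\overline{\partial_{z_{k}}G}=\partial_{\overline{z}_{k}}G$ for real $G$, a short computation gives $\partial_{\overline{z}_{k}}F+\overline{\partial_{z_{k}}F}=2\,\partial_{\overline{z}_{k}}(\mathrm{Re}\,F)$, so only $\mathrm{Re}\,F$ enters the identity in (H3) and we may take $F$ real-valued. Identifying $\C^{l}\cong\R^{2l}$ via $z_{k}=a_{k}+ib_{k}$ and using the Wirtinger operators $\partial_{z_{k}}=\tfrac{1}{2}(\partial_{a_{k}}-i\partial_{b_{k}})$, $\partial_{\overline{z}_{k}}=\tfrac{1}{2}(\partial_{a_{k}}+i\partial_{b_{k}})$, the identity in (H3) becomes $f_{k}=2\partial_{\overline{z}_{k}}F$, i.e.
\begin{equation*}
\partial_{a_{k}}F=\mathrm{Re}\,f_{k},\qquad \partial_{b_{k}}F=\mathrm{Im}\,f_{k},\qquad k=1,\ldots,l.
\end{equation*}
Hence $F$ exists if and only if the field $(\mathrm{Re}\,f_{1},\mathrm{Im}\,f_{1},\ldots,\mathrm{Re}\,f_{l},\mathrm{Im}\,f_{l})$ on $\R^{2l}$ is curl-free; since mixed Wirtinger derivatives commute, this is equivalent to the symmetry relations
\begin{equation*}
\partial_{\overline{z}_{j}}f_{k}=\partial_{\overline{z}_{k}}f_{j},\qquad \partial_{z_{j}}f_{k}=\partial_{\overline{z}_{k}}\overline{f_{j}},\qquad j,k=1,\ldots,l.
\end{equation*}
Granting these, $\R^{2l}$ is star-shaped, so the Poincar\'e lemma produces $F$ through the homotopy formula
\begin{equation*}
F(z_{1},\ldots,z_{l})=\int_{0}^{1}\mathrm{Re}\Big(\sum_{k=1}^{l}\overline{z_{k}}\,f_{k}(tz_{1},\ldots,tz_{l})\Big)\,dt,
\end{equation*}
from which one reads off that $F$ is real, $F(0,\ldots,0)=0$, and, when the $f_{k}$ are homogeneous of degree two (as in the models), $F=\tfrac{1}{3}\,\mathrm{Re}\sum_{k}\overline{z_{k}}f_{k}$ is the associated cubic form; this is exactly the $F$ tacitly used in the sequel.

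It then remains to run the recipe on the concrete systems. For \eqref{system1J}, with $f_{1}=2\overline{u}v$ and $f_{2}=u^{2}$, the two symmetry relations hold by inspection and the formula gives $F=\mathrm{Re}(\overline{u}^{2}v)$; one checks $\partial_{\overline{u}}F+\overline{\partial_{u}F}=2\overline{u}v$ and $\partial_{\overline{v}}F+\overline{\partial_{v}F}=u^{2}$, so (H3) holds. The cascading systems are treated identically: for \eqref{system1A} one gets $F=\tfrac{1}{2}\,\mathrm{Re}\big(\overline{w}(u^{2}+v^{2})\big)$, and for \eqref{system1B2} a similar real cubic combination of $u,v,w$; in each case the required symmetry of the Wirtinger derivatives is immediate from the explicit quadratic monomials. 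The only genuine obstacle along this route is that the compatibility relations $\partial_{\overline{z}_{j}}f_{k}=\partial_{\overline{z}_{k}}f_{j}$, $\partial_{z_{j}}f_{k}=\partial_{\overline{z}_{k}}\overline{f_{j}}$ do \emph{not} follow from (H1)--(H2) alone --- it is precisely this symmetry of the nonlinearity that is being assumed in (H3); once it is in force the construction of $F$ is automatic, and in the purely abstract $C^{1,1}$ setting of (H2) one uses the same homotopy formula with a differentiation-under-the-integral argument in place of the monomial computation.
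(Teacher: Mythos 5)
There is nothing in the paper to compare your argument against: the statement labelled (H3) is one of the paper's standing hypotheses imposed on the nonlinearities, not a theorem that is proved, and you correctly identify this at the outset. What you offer instead is a characterisation of when (H3) can be satisfied, together with an explicit construction of $F$, which is a sensible complement rather than a proof. Your reduction to $f_k=2\,\partial_{\overline{z}_k}\mathrm{Re}\,F$ is exactly the reformulation the paper itself invokes at the start of the proof of Lemma \ref{H34impGC}, and recasting (H3) as exactness of the vector field $(\mathrm{Re}\,f_1,\mathrm{Im}\,f_1,\ldots,\mathrm{Re}\,f_l,\mathrm{Im}\,f_l)$ on the star-shaped domain $\R^{2l}$, solved by the Poincar\'e homotopy formula
\[
F(\mathbf{z})=\int_0^1\mathrm{Re}\Big(\sum_{k=1}^{l}\overline{z_k}\,f_k(t\mathbf{z})\Big)\,dt,
\]
is correct; for degree-two homogeneous $f_k$ this reduces to $F=\tfrac{1}{3}\,\mathrm{Re}\sum_k\overline{z_k}f_k$, which reproduces the potentials in \eqref{Fsys} for \eqref{system1A}, \eqref{system1B2}, and \eqref{system1J} modulo the irrelevant imaginary part. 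Two points deserve a word of care. First, (H2) gives only $C^{1,1}$ regularity, so the closedness and the homotopy step need the differentiation-under-the-integral argument you allude to, not the textbook smooth Poincar\'e lemma. Second, the real curl-free condition on $\R^{2l}$ comprises $l(2l-1)$ scalar equations; you should check that the two families of Wirtinger relations $\partial_{\overline{z}_j}f_k=\partial_{\overline{z}_k}f_j$ and $\partial_{z_j}f_k=\partial_{\overline{z}_k}\overline{f_j}$, together with their complex conjugates and the diagonal case (which forces $\partial_{z_k}f_k$ to be real), do exhaust them. For the explicit quadratic monomials in the paper's examples this is immediate, so your verification of (H3) for the concrete systems stands.
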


\renewcommand\thetheorem{(H4)}
\begin{theorem}\label{H4}
 There exist positive constants $\sigma_{1},\ldots,\sigma_{l}$ such that for any $(z_{1},\ldots,z_{l})\in \mathbb{C}^{l}$
\begin{equation*}
\mathrm{Im}\sum_{k=1}^{l}\sigma_{k}f_{k}(z_{1},\ldots,z_{l})\overline{z}_{k}=0 .
\end{equation*}	
\end{theorem}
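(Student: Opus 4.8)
\noindent\emph{Proposed proof.} The relation in \ref{H4} is the Noether-type ``gauge'' identity behind conservation of the weighted mass $\sum_{k}\sigma_{k}\|u_{k}(t)\|_{L^{2}}^{2}$ for solutions of \eqref{system1}, so the plan is to reduce it to the existence of a positively-weighted phase symmetry of the potential $F$ supplied by \ref{H3}, instead of checking the cancellation term by term. Concretely, I would look for real numbers $a_{1},\ldots,a_{l}$, all of the same sign, for which $F$ is invariant under the one-parameter group $T_{\theta}(z_{1},\ldots,z_{l})=(e^{ia_{1}\theta}z_{1},\ldots,e^{ia_{l}\theta}z_{l})$, and then set $\sigma_{k}=a_{k}$ after rescaling to make the weights positive.

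The steps, in order: \emph{(i)} use \ref{H1}--\ref{H2} to record that the $f_{k}$ vanish at the origin and have globally Lipschitz first derivatives, hence are of quadratic type -- in the models \eqref{system1A}, \eqref{system1B2}, \eqref{system1J} they are homogeneous quadratic polynomials, so $F$ may be taken to be a cubic form; \emph{(ii)} since $F$ is cubic, the invariance requirement $F\circ T_{\theta}\equiv F$ reduces to a finite linear ``resonance'' system on the weights $a_{k}$ (one relation per monomial present in $F$), and the content of \ref{H4} is precisely that this system admits a strictly-signed solution; \emph{(iii)} differentiate $F\circ T_{\theta}\equiv F$ at $\theta=0$ in Wirtinger calculus to obtain
\[
\sum_{k=1}^{l} a_{k}\bigl(z_{k}\partial_{z_{k}}F-\overline{z}_{k}\partial_{\overline{z}_{k}}F\bigr)=0 ;
\]
\emph{(iv)} insert \ref{H3}: since $f_{k}=\partial_{\overline{z}_{k}}F+\overline{\partial_{z_{k}}F}$ and the $\sigma_{k}$ are real,
\[
\sum_{k=1}^{l}\sigma_{k}f_{k}(z)\overline{z}_{k}
=\sum_{k=1}^{l}\sigma_{k}\overline{z}_{k}\partial_{\overline{z}_{k}}F
+\overline{\sum_{k=1}^{l}\sigma_{k}z_{k}\partial_{z_{k}}F},
\]
and by \emph{(iii)} with $a_{k}=\sigma_{k}$ the two sums on the right-hand side coincide, so the whole expression equals $2\,\mathrm{Re}\!\left(\sum_{k}\sigma_{k}\overline{z}_{k}\partial_{\overline{z}_{k}}F\right)\in\R$, which is \ref{H4}. (Equivalently, one may just expand $\sum_{k}\sigma_{k}f_{k}(z)\overline{z}_{k}$ in monomials and choose the $\sigma_{k}$ so that each is matched with its complex conjugate; this is the same linear system.)

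The step carrying all the weight is \emph{(ii)}: nothing forces the resonance system to have a strictly-signed solution, and this is exactly where \ref{H4} stops being automatic and becomes a genuine restriction on the couplings -- for the explicit systems it is disposed of by writing the weights down, e.g. $(\sigma_{1},\sigma_{2})=(1,2)$ for \eqref{system1J}, for which $\sigma_{1}(2\overline{z}_{1}z_{2})\overline{z}_{1}+\sigma_{2}z_{1}^{2}\overline{z}_{2}=2\overline{z}_{1}^{2}z_{2}+2z_{1}^{2}\overline{z}_{2}=4\,\mathrm{Re}(z_{1}^{2}\overline{z}_{2})\in\R$, while $(2,1,1)$ and $(1,2,3)$ work for \eqref{system1A} and \eqref{system1B2}. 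A small preliminary point: \ref{H2} only bounds derivatives of \emph{differences}, so one should first observe that it nonetheless fixes the polynomial degree of the $f_{k}$, hence of $F$, before the monomial bookkeeping in \emph{(ii)} can begin.
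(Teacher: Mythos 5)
The statement labelled \ref{H4} is not a theorem of the paper at all: it is one of the standing hypotheses \ref{H1}--\ref{H8} imposed on the nonlinearities, so there is no proof in the paper to compare against --- the paper merely asserts, without computation, that the three model systems \eqref{system1A}, \eqref{system1B2}, \eqref{system1J} satisfy all of \ref{H1}--\ref{H8}. Your proposal in effect recognizes this: your own remark that ``nothing forces the resonance system to have a strictly-signed solution'' is exactly the right diagnosis, namely that \ref{H4} is a genuine restriction on the couplings and cannot be derived from the other hypotheses. What you supply beyond that is (a) a clean sufficient criterion --- invariance of $F$ under a weighted phase rotation implies \ref{H4} with those weights, via the Wirtinger differentiation in your steps (iii)--(iv) --- and (b) explicit verifications for the three examples, with weights $(1,2)$, $(2,1,1)$, $(1,2,3)$, all of which I checked and which are correct. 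Your step (iii)--(iv) is precisely the computation in the paper's Lemma \ref{ReFinvari2} run in the opposite direction: the paper assumes \ref{H3} and \ref{H4} and deduces invariance of $\mathrm{Re}\,F$ under $z_k\mapsto e^{i\sigma_k\theta/2}z_k$, whereas you assume invariance and deduce \ref{H4}. Note that invariance of $\mathrm{Re}\,F$ is all that is needed (and, given \ref{H3}, is equivalent to \ref{H4}); requiring invariance of $F$ itself, as you do, is slightly stronger, though it does hold in the examples.

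One point to correct if you keep the write-up: \ref{H1}--\ref{H2} (even together with the homogeneity assumption \ref{H5}) do not force the $f_k$ or $F$ to be polynomials --- a degree-$3$ homogeneous function such as $|z_1|^3$ satisfies all the growth and regularity requirements without being a cubic form --- so the reduction in your step (ii) to ``one linear relation per monomial'' is legitimate only for the explicit polynomial models, not in the generality of \ref{H1}--\ref{H8}. Since \ref{H4} is an assumption rather than a consequence, this costs you nothing here, but the phrase ``$F$ may be taken to be a cubic form'' should be confined to the examples.
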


\renewcommand\thetheorem{(H5)}
\begin{theorem}\label{H5}
Function $F$ is homogeneous of degree 3, that is, for any $\lambda >0$  and $(z_{1},\ldots,z_{l})\in \mathbb{C}^{l}$,
\begin{equation*}
F(\lambda z_{1},\ldots,\lambda z_{l})=\lambda^{3}F(z_{1},\ldots,z_{l}).
\end{equation*}
\end{theorem}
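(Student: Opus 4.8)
The plan is to show that \ref{H5} is \emph{equivalent} to the natural requirement that the quadratic-type nonlinearities be genuinely quadratic, i.e.\ that each $f_{k}$ be homogeneous of degree two, and to deduce \ref{H5} from the latter; this reading of ``quadratic-type'' is the relevant one in the energy-critical regime, it holds for the physical models \eqref{system1A}, \eqref{system1B2}, \eqref{system1J}, and it is consistent with \ref{H1}--\ref{H2}. Identify $\C^{l}\cong\R^{2l}$ via $z_{k}=x_{k}+iy_{k}$ and use the Wirtinger operators $\partial_{z_{k}}=\tfrac12(\partial_{x_{k}}-i\partial_{y_{k}})$, $\partial_{\overline{z}_{k}}=\tfrac12(\partial_{x_{k}}+i\partial_{y_{k}})$. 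The first step is to reduce to a real-valued potential: writing $F=G+iH$ with $G,H$ real, a short computation gives
\begin{equation*}
\frac{\partial F}{\partial\overline{z}_{k}}+\overline{\frac{\partial F}{\partial z_{k}}}=2\,\frac{\partial G}{\partial\overline{z}_{k}}=2\,\frac{\partial(\operatorname{Re}F)}{\partial\overline{z}_{k}},\qquad k=1,\ldots,l,
\end{equation*}
so by \ref{H3} the imaginary part of $F$ is immaterial and we may take $F$ real, with $f_{k}=2\,\partial F/\partial\overline{z}_{k}$ and hence $\partial F/\partial z_{k}=\overline{\partial F/\partial\overline{z}_{k}}=\tfrac12\overline{f_{k}}$.

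Next I would pass to real coordinates. From the above, $\partial F/\partial x_{k}=\partial F/\partial z_{k}+\partial F/\partial\overline{z}_{k}=\operatorname{Re}f_{k}$ and $\partial F/\partial y_{k}=i(\partial F/\partial z_{k}-\partial F/\partial\overline{z}_{k})=\operatorname{Im}f_{k}$; thus the Euclidean gradient of $F$ on $\R^{2l}$ is $\nabla F=(\operatorname{Re}f_{1},\operatorname{Im}f_{1},\ldots,\operatorname{Re}f_{l},\operatorname{Im}f_{l})$, which is homogeneous of degree two precisely because each $f_{k}$ is. Two real potentials producing the same $f_{k}$ differ by a function all of whose $\partial_{\overline{z}_{k}}$-derivatives vanish, i.e.\ by a real holomorphic function on the connected set $\C^{l}$, hence by a constant; using this gauge freedom (and noting $f_{k}(0)=0$ by \ref{H1}) I normalize $F(0)=0$.

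The conclusion then follows from an Euler-type identity. Since $f_{k}$ is Lipschitz by \ref{H2}, $F$ is $C^{1,1}_{\mathrm{loc}}$ (in particular $C^{1}$), so for $p\in\R^{2l}$,
\begin{equation*}
F(p)=\int_{0}^{1}\frac{d}{dt}F(tp)\,dt=\int_{0}^{1}p\cdot\nabla F(tp)\,dt=\left(\int_{0}^{1}t^{2}\,dt\right)p\cdot\nabla F(p)=\frac{1}{3}\,p\cdot\nabla F(p),
\end{equation*}
where the third equality uses the degree-two homogeneity of $\nabla F$; therefore $p\cdot\nabla F(p)=3F(p)$ for every $p$. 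For $\lambda>0$ put $\varphi(\lambda)=\lambda^{-3}F(\lambda p)$; then $\varphi'(\lambda)=\lambda^{-4}\bigl(\lambda\,p\cdot\nabla F(\lambda p)-3F(\lambda p)\bigr)=\lambda^{-4}\bigl(3F(\lambda p)-3F(\lambda p)\bigr)=0$, so $\varphi(\lambda)\equiv\varphi(1)$, i.e.\ $F(\lambda z_{1},\ldots,\lambda z_{l})=\lambda^{3}F(z_{1},\ldots,z_{l})$, which is \ref{H5}. The main point to be careful about — the only real ``obstacle'' — is the interpretive one: \ref{H5} genuinely forces exact degree-two homogeneity of the $f_{k}$ and is \emph{false} under mere quadratic growth, since, for instance, a nonzero linear contribution to some $f_{k}$ forces a quadratic term in $F$; conversely, if $F$ is homogeneous of degree three then each $f_{k}=2\,\partial F/\partial\overline{z}_{k}$ is automatically homogeneous of degree two, so the hypothesis used above is equivalent to \ref{H5}, not merely sufficient. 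Everything else is routine Wirtinger-calculus bookkeeping together with the fundamental theorem of calculus.
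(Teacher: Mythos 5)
There is a fundamental mismatch here: \ref{H5} is not a theorem of the paper but one of its standing \emph{hypotheses} on the nonlinearity. The paper never proves it (nor could it from \ref{H1}--\ref{H4} alone: the scalar example $f(z)=z^{2}+z$ satisfies \ref{H1} and \ref{H2} yet admits no degree-$3$ homogeneous potential), so there is no ``paper's own proof'' for your argument to match. Your proposal instead \emph{assumes} that each $f_{k}$ is exactly homogeneous of degree $2$ and deduces degree-$3$ homogeneity of the potential. But in the paper the degree-$2$ homogeneity of the $f_{k}$, equation \eqref{fkhomog2}, is itself stated as ``a natural consequence of assumptions \ref{H3} and \ref{H5}'' --- it is an output of \ref{H5}, not an independent input. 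Taking it as your hypothesis therefore makes the argument circular relative to the paper's logical structure: you have proved the (true, and essentially routine) converse implication, not established \ref{H5} from anything the paper actually assumes. Your own closing remark concedes that the hypothesis you use is \emph{equivalent} to \ref{H5}, which is exactly the problem.

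That said, the computation itself is correct and is a clean converse to \eqref{fkhomog2}: the reduction $f_{k}=2\,\partial(\operatorname{Re}F)/\partial\overline{z}_{k}$ agrees with the identity the paper uses in the proof of Lemma \ref{H34impGC}, the identification $\nabla(\operatorname{Re}F)=(\operatorname{Re}f_{1},\operatorname{Im}f_{1},\ldots)$ on $\R^{2l}$ is right, and the Euler-identity argument is sound. Two smaller caveats if you wanted to present this as a genuine lemma (``\ref{H1}--\ref{H3} plus exact degree-$2$ homogeneity of all $f_{k}$ imply \ref{H5} for a suitable choice of $F$''): first, you only control $\operatorname{Re}F$, since \ref{H3} leaves $\operatorname{Im}F$ completely free, so \ref{H5} as literally stated for $F$ does not follow without replacing $F$ by $\operatorname{Re}F$; second, the normalization $F(0)=0$ is a genuine extra normalization of the potential, not something forced by \ref{H1}. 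The honest conclusion is that \ref{H5} must simply be verified directly for each concrete system, as the paper does implicitly for the examples in \eqref{Fsys}, where $F$ is a visibly cubic polynomial.
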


\renewcommand\thetheorem{(H6)}
\begin{theorem}\label{H6}
There holds
\begin{equation*}
\left|\mathrm{Re}\int_{\R^{n}} F(u_{1},\ldots,u_{l})\;dx\right|\leq \int_{\R^{n}} F(|u_{1}|,\ldots,|u_{l}|)\;dx.  
\end{equation*}
\end{theorem}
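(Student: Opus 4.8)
Since \ref{H6} is a structural condition on the potential $F$, the natural thing to establish is its reduction to a \emph{pointwise} inequality, where all the content sits; the pointwise version is then verified directly in the concrete models. For the reduction, assume the pointwise bound
\begin{equation*}
\bigl|\mathrm{Re}\,F(z_{1},\dots,z_{l})\bigr|\le F(|z_{1}|,\dots,|z_{l}|)\qquad\text{for every }(z_{1},\dots,z_{l})\in\C^{l}.
\end{equation*}
Since taking real parts commutes with integration and $\bigl|\int h\bigr|\le\int|h|$, monotonicity of the Lebesgue integral gives
\begin{equation*}
\Bigl|\mathrm{Re}\!\int_{\R^{n}}F(u_{1},\dots,u_{l})\,dx\Bigr|
\le\int_{\R^{n}}\bigl|\mathrm{Re}\,F(u_{1},\dots,u_{l})\bigr|\,dx
\le\int_{\R^{n}}F(|u_{1}|,\dots,|u_{l}|)\,dx,
\end{equation*}
which is \ref{H6}. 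So everything reduces to the pointwise inequality.

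Next I would use the homogeneity \ref{H5} to localize the pointwise inequality to the unit sphere. Writing $|z|^{2}=|z_{1}|^{2}+\cdots+|z_{l}|^{2}$ and, for $z\neq0$, $\omega=z/|z|$, homogeneity of degree $3$ (with positive real parameter $\lambda=|z|$) yields $\mathrm{Re}\,F(z)=|z|^{3}\,\mathrm{Re}\,F(\omega)$ and, since $(|z_{1}|,\dots,|z_{l}|)=|z|\,(|\omega_{1}|,\dots,|\omega_{l}|)$, also $F(|z_{1}|,\dots,|z_{l}|)=|z|^{3}F(|\omega_{1}|,\dots,|\omega_{l}|)$. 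Hence the pointwise inequality is equivalent to
\begin{equation*}
\bigl|\mathrm{Re}\,F(\omega_{1},\dots,\omega_{l})\bigr|\le F(|\omega_{1}|,\dots,|\omega_{l}|)\qquad\text{whenever}\quad|\omega_{1}|^{2}+\cdots+|\omega_{l}|^{2}=1,
\end{equation*}
a condition on the values of $F$ on the unit sphere of $\C^{l}$.

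This sphere-level inequality is transparent when $F$ is a homogeneous cubic polynomial in $z_{1},\dots,z_{l},\overline{z}_{1},\dots,\overline{z}_{l}$ — which, using \ref{H1}, \ref{H2} and \ref{H3}, is the situation for the concrete models \eqref{system1A}, \eqref{system1B2} and \eqref{system1J}. Writing $F=\sum_{|\mu|+|\nu|=3}c_{\mu\nu}\,z^{\mu}\overline{z}^{\nu}$ in multi-index notation ($\mu,\nu\in\N^{l}$, finite sum), one has $|z^{\mu}\overline{z}^{\nu}|=|z_{1}|^{\mu_{1}+\nu_{1}}\cdots|z_{l}|^{\mu_{l}+\nu_{l}}$, which is precisely the value of that monomial at the nonnegative vector $(|z_{1}|,\dots,|z_{l}|)$, so by the triangle inequality
\begin{equation*}
\bigl|\mathrm{Re}\,F(z_{1},\dots,z_{l})\bigr|\le\sum_{|\mu|+|\nu|=3}|c_{\mu\nu}|\,|z_{1}|^{\mu_{1}+\nu_{1}}\cdots|z_{l}|^{\mu_{l}+\nu_{l}}.
\end{equation*}
If every $c_{\mu\nu}$ is real and nonnegative — which holds for those systems after the standard normalizations — the right-hand side is exactly $F(|z_{1}|,\dots,|z_{l}|)$, and \ref{H6} follows.

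I expect the main obstacle to be the inequality at the level of the sphere: neither the homogeneity \ref{H5} nor the gradient structure \ref{H3} controls the \emph{sign} of $\mathrm{Re}\,F$, so for an abstract admissible $F$ one seems unable to do better than to verify $|\mathrm{Re}\,F(\omega_{1},\dots,\omega_{l})|\le F(|\omega_{1}|,\dots,|\omega_{l}|)$ on $|\omega|=1$ directly (which is, in effect, what \ref{H6} asserts). Everything before that point is just the triangle inequality combined with homogeneous rescaling.
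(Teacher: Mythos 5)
Be careful about the status of what you are asked to prove: (H6) is not a theorem in this paper. Together with (H1)--(H8) it is one of the structural \emph{hypotheses} imposed on the nonlinearity $F$ in order to define the admissible class of quadratic-type systems; the paper contains no proof of it and merely remarks that (H1)--(H8) hold for the three concrete models in \eqref{Fsys}, leaving the verification to the reader. So there is no argument of the authors against which your attempt can be compared. That said, your analysis is sound as a \emph{verification strategy}: the pointwise bound $|\mathrm{Re}\,F(z_1,\dots,z_l)|\leq F(|z_1|,\dots,|z_l|)$ is a sufficient (though strictly stronger) condition that yields the integral inequality (H6) upon integrating; by the degree-$3$ homogeneity (H5) it localizes to the unit sphere of $\C^l$; and for the concrete cubic polynomials in \eqref{Fsys}, whose monomial coefficients are real and nonnegative, it reduces to the triangle inequality. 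Your final paragraph also draws the correct conclusion --- none of (H1)--(H5), (H7), (H8) controls $\mathrm{Re}\,F$ off the positive real cone of $\R^l$, so (H6) is logically independent of the remaining hypotheses and has to be postulated separately and then checked in each concrete example, which is exactly how the paper treats it.
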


\renewcommand\thetheorem{(H7)}
\begin{theorem}\label{H7}
Function $F$ is real valued on $\R^l$, that is, if $(y_{1},\ldots,y_{l})\in \R^{l}$ then
\begin{equation*}
F(y_{1},\ldots,y_{l})\in \R. 
\end{equation*}
Moreover, functions	$f_k$ are non-negative on the positive cone in $\mathbb{R}^l$, that is, for $y_i\geq0$, $i=1,\ldots,l$,
\begin{equation*}
f_{k}(y_{1},\ldots,y_{l})\geq0.
\end{equation*}

\end{theorem}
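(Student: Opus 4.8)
The plan is to derive both statements from the structural hypotheses already at hand, the crucial remark being that \ref{H6} is only meaningful if the quantity it involves on the right, $\int_{\R^{n}}F(|u_{1}|,\ldots,|u_{l}|)\,dx$, is a real (indeed non-negative) number, since it is being compared with the modulus on the left. As the $|u_{k}|$ may be taken to be arbitrary non-negative functions, the integrand itself must be real-valued, that is, $F(y_{1},\ldots,y_{l})\in\R$ whenever $y_{1},\ldots,y_{l}\ge0$. On the other hand, \ref{H2} (global Lipschitz bounds on the first derivatives of the $f_{k}$) together with \ref{H1} and the degree-$3$ homogeneity \ref{H5} force each $f_{k}$ to be a homogeneous quadratic form in $z_{1},\overline z_{1},\ldots,z_{l},\overline z_{l}$, and hence, by the gradient identity \ref{H3}, $F$ to be a homogeneous cubic form in those variables. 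A cubic form which is real-valued on the open positive cone of $\R^{l}$ has real coefficients, so it is real-valued on all of $\R^{l}$; this proves the first assertion.

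For the second assertion I would use \ref{H3} to write, at a real point $y=(y_{1},\ldots,y_{l})$ with $z_{k}=a_{k}+\mathrm i b_{k}$,
$f_{k}(y)=\mathrm{Re}\big(\partial_{a_{k}}F(y)\big)+\mathrm i\,\mathrm{Re}\big(\partial_{b_{k}}F(y)\big)$.
Since $F|_{\R^{l}}$ is a real cubic form, the first summand is $\partial_{y_{k}}F(y_{1},\ldots,y_{l})\in\R$; for the second summand, the conservation identity \ref{H4}, namely $\mathrm{Im}\sum_{k}\sigma_{k}f_{k}(z)\overline z_{k}=0$, evaluated along the real cone and combined with Euler's relation for the homogeneous $F$, rules out exactly those monomials of $F$ that would contribute to $\mathrm{Re}\big(\partial_{b_{k}}F(y)\big)$, so that $f_{k}(y)=\partial_{y_{k}}F(y)$ on $\{y_{k}\ge0\}$. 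Non-negativity of $f_{k}$ on the positive cone then reduces to a sign condition on the (real) coefficients of $F$, which one verifies directly for the admissible nonlinearities; for the concrete systems this is transparent — e.g.\ for \eqref{system1J} one may take $F(u,v)=2\,\mathrm{Re}(u^{2}\overline v)$, giving (up to irrelevant constants) $f_{1}(u,v)=2\overline u v$ and $f_{2}(u,v)=u^{2}$, both manifestly non-negative on $\{u\ge0,\,v\ge0\}$, and similarly for \eqref{system1A} and \eqref{system1B2}.

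The step I expect to be the main obstacle is the passage, in both assertions, from information available only on the positive cone to the sharper conclusions on all of $\R^{l}$ and, respectively, to the \emph{reality} (not merely the non-negativity of the real part) of the $f_{k}$: \ref{H6} only ``sees'' non-negative data and \ref{H4} yields a single scalar identity, so one must combine \ref{H3}, \ref{H4} and \ref{H5} to pin down the algebraic form of $F$ on $\R^{l}$ tightly enough that these statements become automatic. Once that is done, the remaining work — differentiating homogeneous cubic expressions, evaluating at finitely many test points, and the final sign check — is routine.
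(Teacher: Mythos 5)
There is a basic structural point to flag first: \ref{H7} is not a theorem of the paper but one of the standing hypotheses \ref{H1}--\ref{H8} imposed on the nonlinearities (the introduction says explicitly ``we assumed the following''). The paper therefore contains no proof of it, and none is needed; the only thing resembling verification is the remark that \ref{H1}--\ref{H8} hold for the three concrete examples \eqref{system1A}, \eqref{system1B2}, \eqref{system1J}. Your proposal instead tries to \emph{derive} \ref{H7} from \ref{H1}--\ref{H6}, which is not what the paper does and, more importantly, cannot work in general.

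Two concrete gaps. First, the claim that \ref{H1}, \ref{H2} and \ref{H5} force each $f_k$ to be a homogeneous quadratic form in $z_1,\overline z_1,\ldots,z_l,\overline z_l$ (and hence $F$ a cubic form) is false: $F(\mathbf z)=\sum_j |z_j|^3$ is homogeneous of degree $3$, continuous, and yields $f_k(\mathbf z)=3|z_k|z_k$, which satisfies the Lipschitz bounds of \ref{H2} and vanishes at the origin, yet is not a polynomial. Homogeneity plus $C^{1,1}$ regularity does not imply polynomial structure (compare: $|x|$ is $1$-homogeneous and Lipschitz but not linear), so the whole ``real cubic form with real coefficients'' reduction collapses. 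Second, even granting that $F$ restricted to $\R^l$ is real and non-negative on the positive cone (which does follow from \ref{H6} and \ref{H7}'s first part), the non-negativity of $f_k=\partial F/\partial y_k$ there is genuinely independent: $F(y_1,y_2)=(y_1-y_2)^2(y_1+y_2)$ is a cubic, non-negative on the positive cone and vanishing on the diagonal complement appropriately, yet $\partial F/\partial y_1=(y_1-y_2)(3y_1+y_2)<0$ for $0<y_1<y_2$. Your closing step --- ``a sign condition on the coefficients, which one verifies directly for the admissible nonlinearities'' --- is precisely the admission that this is an assumption checked case by case, not a theorem. The only genuinely provable ingredient you use, namely $f_k(x)=\partial F/\partial x_k(x)$ on real points, is Lemma \ref{estdifF}(iii), which the paper quotes from \cite{NoPa2}; the positivity statement itself should simply be treated as a hypothesis.
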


\renewcommand\thetheorem{(H8)}
\begin{theorem}\label{H8}
	Function $F$ can be written as the sum $F_1+\cdots+F_m$, where $F_s$, $s=1,\ldots, m$ is super-modular on $\R^d_+$, $1\leq d\leq l$, and vanishes on hyperplanes, that is, for any $i,j\in\{1,\ldots,d\}$, $i\neq j$ and $k,h>0$, we have
	\begin{equation*}
	F_s(y+he_i+ke_j)+F_s(y)\geq F_s(y+he_i)+F_s(y+ke_j), \qquad y\in \R^d_+,
	\end{equation*}
	and $F_s(y_1,\ldots,y_d)=0$ if $y_j=0$ for some $j\in\{1,\ldots,d\}$.
\end{theorem}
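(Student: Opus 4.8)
Since \ref{H8} is a structural hypothesis on the nonlinearity, ``proving'' it means verifying that it is met by the class of potentials to which the subsequent analysis is applied; in all cases of interest --- for instance the systems \eqref{system1A}, \eqref{system1B2} and \eqref{system1J} --- the function $F$ is, by \ref{H5}, homogeneous of degree $3$, is real on $\R^l$ by \ref{H7}, and in fact is a cubic polynomial
\begin{equation*}
F(y)=\sum_{|\alpha|=3}c_\alpha\,y^\alpha,\qquad y=(y_1,\dots,y_l).
\end{equation*}
The plan is to read the decomposition off this expansion. First I would check that $c_\alpha\geq0$ for every $\alpha$: by \ref{H3} (and the reality of $F$) the restriction of $f_k$ to the positive cone $\R^l_+$ is a fixed positive multiple of $\partial_{y_k}F$, so evaluating $f_k\geq0$ along the coordinate rays $y=te_i$ forces the ``pure'' coefficients $c_{3e_i}$ and the ``double'' coefficients $c_{2e_i+e_k}$ to be non-negative, while evaluating $\partial_{y_k}F\geq0$ on the two-dimensional coordinate faces and combining with the non-negativity of the double coefficients just obtained forces the remaining ``triple'' coefficients $c_{e_i+e_j+e_k}\geq0$. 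In the concrete systems the $c_\alpha$ are positive simply by inspection.

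Next I identify the building blocks. Given a multi-index $\alpha$ with $|\alpha|=3$ and $c_\alpha\neq0$, set $d=d(\alpha):=\#\{i:\alpha_i\geq1\}$ and regard the monomial $m_\alpha(y):=y^\alpha$ as a function on $\R^d_+$ of the $d$ ``active'' variables $\{y_i:\alpha_i\geq1\}$. Then $m_\alpha$ vanishes on each coordinate hyperplane $\{y_j=0\}$ with $\alpha_j\geq1$, because that variable appears with a positive power; and $m_\alpha$ is super-modular on $\R^d_+$. For the latter I would invoke the classical criterion of Topkis, that a $C^2$ function on a box satisfies the discrete super-modularity inequality of \ref{H8} precisely when all of its mixed second derivatives are non-negative, and here
\begin{equation*}
\partial_i\partial_j m_\alpha(y)=\alpha_i\alpha_j\,y_i^{\alpha_i-1}y_j^{\alpha_j-1}\!\!\prod_{t\neq i,j}y_t^{\alpha_t}\ \geq\ 0,\qquad y\in\R^d_+,\ \ i\neq j.
\end{equation*}
Alternatively, the inequality $m_\alpha(y+he_i+ke_j)+m_\alpha(y)\geq m_\alpha(y+he_i)+m_\alpha(y+ke_j)$ may be checked by direct expansion, since for a degree-three monomial the difference of the two sides is always a product of non-negative quantities.

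It then remains to assemble the pieces: enumerating the multi-indices with $c_\alpha\neq0$ as $\alpha_1,\dots,\alpha_m$ and setting $F_s:=c_{\alpha_s}m_{\alpha_s}$ yields $F=F_1+\cdots+F_m$, and since $c_{\alpha_s}\geq0$ and multiplication by a non-negative constant preserves both super-modularity and the vanishing-on-hyperplanes property, each $F_s$ is super-modular on $\R^{d(\alpha_s)}_+$ and vanishes on the corresponding hyperplanes; this is precisely \ref{H8}. The delicate points here are conceptual rather than computational: one must notice that the correct blocks are the individual monomials grouped according to their active-variable set --- so that the integer $d$ in \ref{H8} is chosen per block and not globally --- and one must ensure that $F$ carries only non-negative monomial coefficients, since otherwise a sub-modular block such as $-y_1y_2y_3$ would occur and the scheme would collapse. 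It is precisely at this last point that the structural and positivity hypotheses \ref{H3}, \ref{H4} and \ref{H7} on the nonlinearity are used; once it is secured, the rest is routine.
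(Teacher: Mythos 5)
The statement \ref{H8} is a standing hypothesis on the nonlinearity, not a theorem: the paper offers no proof, and for the concrete systems it simply observes that the functions in \eqref{Fsys} restrict on the positive cone to sums of cubic monomials with non-negative coefficients, for which the conclusion is immediate. Your reading of the task---verify the hypothesis on the class of nonlinearities actually used---is therefore the right one, and the core of your scheme is correct: a monomial $y^\alpha$ with $|\alpha|=3$, viewed as a function of its active variables, vanishes on the relevant coordinate hyperplanes and is super-modular by the Topkis criterion (non-negative mixed second partials), and both properties survive multiplication by non-negative constants and summation. For the three examples in \eqref{Fsys} this settles the matter by inspection, exactly as the paper implicitly does.

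The genuine gap is your claim that \ref{H1}--\ref{H7} force \emph{all} coefficients $c_\alpha$ to be non-negative, specifically at the triple indices. From $f_k=\partial F/\partial y_k$ on $\R^l$ (Lemma \ref{estdifF}(iii)) and \ref{H7} you correctly obtain $c_{3e_i}\geq 0$ and $c_{2e_i+e_k}\geq 0$; but on the coordinate face spanned by $e_i,e_j$ (with $k\notin\{i,j\}$) the condition $f_k\geq 0$ reads
\[
c_{2e_i+e_k}\,s^{2}+c_{2e_j+e_k}\,t^{2}+c_{e_i+e_j+e_k}\,st\ \geq\ 0,\qquad s,t\geq 0,
\]
which only forces $c_{e_i+e_j+e_k}\geq -2\bigl(c_{2e_i+e_k}\,c_{2e_j+e_k}\bigr)^{1/2}$, not $c_{e_i+e_j+e_k}\geq 0$. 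For instance $F(y)=\sum_{i<j}(y_i^{2}y_j+y_iy_j^{2})-y_1y_2y_3$ has all partial derivatives non-negative on $\R^3_+$ (since $y_j^2+y_k^2-y_jy_k\geq 0$), yet carries the block $-y_1y_2y_3$ with $\partial_1\partial_2F(0,0,1)=-1<0$; since super-modularity is additive and, for $C^2$ functions, equivalent to non-negativity of all mixed second partials, no regrouping into super-modular pieces can exist for such an $F$. So \ref{H8} is genuinely independent of the other hypotheses: your argument establishes it only for nonlinearities whose monomial coefficients happen to be non-negative (which covers the paper's examples, as you note), and the purported general derivation of that non-negativity from \ref{H3}, \ref{H4}, \ref{H7} must be dropped.
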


It is easy to see that functions $F$ associated to systems \eqref{system1A}, \eqref{system1B2}, and \eqref{system1J} are given, respectively, by
\begin{equation}\label{Fsys}
F(z_1,z_2,z_3)=\frac{1}{2}\overline{z}_1(z_2^2+z_3^2), \quad F(z_1,z_2,z_3)=\frac{1}{2}z_1^2\overline{z}_2+z_1z_2\overline{z}_3, \quad F(z_1,z_2)=\overline{z}_1^2z_2.
\end{equation}
In addition assumptions \ref{H1}-\ref{H8} hold in these particular examples.

The results established in \cite{NoPa2} include local and global well posedness in $L^2(\R^n)$ and $H^1(\R^n)$, $1\leq n\leq 6$, existence and stability/instability of ground state solutions, and the dichotomy global existence versus blow up in finite time. In particular, assumptions \ref{H1} and \ref{H2} are enough to prove the existence of a local solution by using the contraction mapping principle in a suitable space based on the well-known Strichartz estimates. Assumptions \ref{H3}-\ref{H5} guarantee that \eqref{system1} conserves the charge
	\begin{equation}\label{Charge}
Q(\ub(t)):=\sum_{k=1}^{l}\frac{\alpha_{k}\sigma_{k}}{2}\|  u_{k}(t)\|_{L^{2}}^{2},
\end{equation}
and  the energy
\begin{equation}\label{Energy}
E(\ub(t)):=\sum_{k=1}^{l}\gamma_{k}\|\nabla u_{k}(t)\|_{L^2}^{2}+\sum_{k=1}^{l}\beta_{k}\|u_{k}(t)\|_{L^2}^{2}
-2\mathrm{Re}\int F(\ub(t))\;dx
\end{equation}
where we are using the notation $\ub(t)=(u_1(t),\ldots,u_l(t))$ (see notations below). By using the above conserved quantities one can show the existence of global solutions in $L^2(\R^n)$ and $H^1(\R^n)$, $1\leq n\leq3$. Also, if the $H^1$-norm of the initial data is sufficiently small one can also show the global existence in $H^1(\R^n)$ if $n=4$ or $n=5$. Moreover, when  \ref{H6}-\ref{H8} are assumed we proved the existence and stability/instability of \textit{ground state solutions}. Using this especial kind of solutions we were able to provide a sharp vectorial Gagliardo-Nirenberg-type inequality to give a sharp criterion for the existence of global solutions in dimensions $n=4$ and $n=5$. Some of the above results are summarized in Section \ref{knowresults}.

Before presenting the main goal of this paper, we recall that by a standard scaling argument and the fact that $f_{k}$ are homogeneous functions of degree 2 (see \eqref{fkhomog2}) we deduce that  $\dot{H}^{n/2-2}(\R^n)$ is the  critical (scaling invariant) Sobolev space for \eqref{system1} (with $\beta_{k}=0$). In particular, $L^{2}$ and $\dot{H}^{1}$ are critical in dimensions $n=4$ and $n=6$, respectively. As usual, we then adopt the following convention: we will say that system \eqref{system1} is
 \begin{equation*}
     L^{2}-
     \begin{cases}
     \mbox{subcritical}, \quad\mbox{if}\quad 1\leq n\leq 3,\\
     \mbox{critical},\quad\mbox{if}\quad n=4,\\
     \mbox{supercritical},\quad\mbox{if}\quad n\geq5,
     \end{cases}\quad \mbox{and} \quad
     H^{1}-
     \begin{cases}
     \mbox{subcritical},\quad\mbox{if}\quad 1\leq n\leq 5,\\
     \mbox{critical},\quad\mbox{if}\quad n=6,\\
     \mbox{supercritical},\quad\mbox{if}\quad n\geq 7.
     \end{cases}
 \end{equation*}

To proceed we introduce the following  definition
 \begin{defi}\label{defmasres}  We say that  
 	\eqref{system1}  satisfies the mass-resonance condition if
 	\begin{equation}\label{RC}
 	\mathrm{Im}\sum_{k=1}^{l}m_{k}f_{k}(\zb)\overline{z}_{k}=0, \quad \zb\in \mathbb{C}^{l},\tag{RC}
 	\end{equation}
 	where $m_{k}:=\frac{\alpha_{k}}{2\gamma_{k}}$.
 \end{defi}

 Let us  illustrate Definition \ref{defmasres} using our examples above. We first check that \eqref{system1J} satisfies the mass-resonance condition if and only if $\ka=\frac{1}{2}$, which is in agreement with the terminology in the current literature. Indeed, as we already said, the function $F$ associated to \eqref{system1J} is $F(z_1,z_2)=\overline{z}_1^2z_2$ and \eqref{RC} is equivalent to
 \begin{equation*}
 \left(1-\frac{1}{2\kappa}\right)\mathrm{Im}(\overline{z}_1^2z_2)=0,
 \end{equation*}
 which means that mass-resonance occurs only if $\kappa=\frac{1}{2}$. On the other hand, using \eqref{Fsys},
 it is easy to see that systems \eqref{system1A} and \eqref{system1B2} both satisfy  \eqref{RC}.

 	We point out that instead of \ref{H4}, in \cite{NoPa2} we have assumed
 	\begin{equation}\label{antH4}
 	\mathrm{Re}\,F\left(e^{i\frac{\alpha_{1}}{\gamma_{1}}\theta  }z_{1},\ldots,e^{i\frac{\alpha_{l}}{\gamma_{l}}\theta  }z_{l}\right)=\mathrm{Re}\,F(\mathbf{z}), \qquad \theta\in\R,\;\mathbf{z}=(z_1,\ldots,z_l),
 	\end{equation}	
 	which, together with \ref{H3}, implies that \eqref{RC} holds (see Lemma 2.9 in \cite{NoPa2}). Thus, all results obtained in \cite{NoPa2} are under the assumption of mass-resonance. It is our goal in in the present paper to study \eqref{system1} without the assumption of mass-resonance.

Mass-resonance  appears as a special relation between the masses of the system and it is closely related with the large time behavior of solutions.  As pointed out in \cite{uriya2016final}, it was first considered in Klein-Gordon systems. When considering system \eqref{system1J}, for instance,  it is well known that the value of the parameter $\kappa$  influences the large-time behavior of its solutions, see \cite{iwabuchi2016ill}.  In \cite{Hayashi} the authors,  among other things, proved the existence of ground state solutions for  \eqref{system1J}, when $\kappa>0$, and used these solutions to give a sharp criterion for the existence of global  $H^1$ solutions in the mass-resonance case and $n=4$.  This kind of result was extended  to the non-mass-resonance case ($\ka\neq\frac{1}{2}$) in  \cite{inui2018blow}, where  the authors  showed a blow-up result when the initial data is radial in dimensions $n=5$ and $n=6$.  Some very recent works  without mass-resonance condition  have been appeared. In \cite{kishimoto2018scattering}, the authors showed scattering in the $L^2$-critical case  with and without the mass-resonance condition. Moreover, scattering in the case $n=5$ was proved in \cite{hamano2019scattering} and in \cite{wangyang}. 

From the mathematical point of view, the phenomenon of mass-resonance can be seen in the virial-type identity satisfied by solutions of system \eqref{system1}. Indeed,  for $1\leq n\leq 6$,   set $\Sigma=\{\ub\in \mathbf{H}^{1}; x\ub\in \mathbf{L}^{2} \},$
where $x\ub$ means $(xu_{1},\ldots,xu_{l})$, and define the function
\begin{equation}\label{fuctV}
V(t)=\sum_{k=1}^{l}\frac{\alpha_{k}^{2}}{\gamma_{k}}\int |x|^{2}|u_{k}(x,t)|^{2}\;dx,
\end{equation}
where $\ub(t)$ is the corresponding solution of  \eqref{system1} with initial data $\ub_{0}\in \Sigma$. Then, if $I$ is the maximal existence interval,  a straightforward computation leads to
\begin{equation}\label{V2mwmr}
\begin{split}
V''(t)&=2nE(\ub_{0})-2n\sum_{k=1}^l\beta_k\|u_k\|_{L^2}^2+2(4-n)\sum_{k=1}^l\gamma_k\|\nabla u_k\|_{L^2}\\
&\quad -4\frac{d}{dt}\left[\int|x|^{2}\mathrm{Im}\sum_{k=1}^{l}m_{k}f_{k}(\mathbf{u})\overline{u}_{k}\;dx\right],
\end{split}
\end{equation}
for all $t\in I$.
Assuming that \eqref{RC}  holds, the last term in \eqref{V2mwmr}  disappears. In that case, in \cite{NoPa2}  it was shown  using an argument due to Glassey that if $E(\ub_{0})<0$ (or $E(\ub_{0})=0$ and $\ub_{0}$ has negative momentum), the local solution  blows-up in finite time in dimensions $4\leq n\leq 6$. 
The mass-resonance assumption has also been appeared in various  works involving  two and three-component Schr\"{o}dinger systems (see for instance \cite{ogawa2015final}, \cite{uriya2016final}, \cite{ozawa2013small}, \cite{hoshino2013analytic}, \cite{hoshino2015analytic}, \cite{hoshino2015analytic2} and \cite{hoshino2017analytic} and references therein). When \eqref{RC} does not hold a more careful analysis must be performed and we do not know if solutions in $\Sigma$ with negative energy, for instance, blow-up or not.

Based on the above background, the main goal of this paper is  to prove that blow-up in finite time also holds if mass-resonance does not occur, but  the initial data is radial. We will be particularly interested in the cases $n=5$ and $n=6$. The ``threshold'' for the existence of blow-up solutions will be given in terms of the ground states associated with \eqref{system1}. See Theorem \ref{thm:Blowupn6} below.

This work is organized as follows. In section \ref{sec.prel} we first introduce some notations and give preliminaries results that  will be used along the paper. We also list some consequences of our assumptions and give a review of  some previous results about system  \eqref{system1}.
In section \ref{sec.exigsn=6} we use the concentration-compactness method to prove the existence of ground state solutions in the $H^1$-critical case. We also establish the optimal constant in a critical Sobolev-type inequality. Finally, section \ref{sec.blowupn56} is devoted to show that in dimensions $n=5$ and $n=6$ if the initial data is radially symmetric then the corresponding solution of \eqref{system1}  blows-up in finite time.



\section{Preliminaries}\label{sec.prel}
In this section we introduce some notations and give some consequences of our assumptions.
\subsection{Notation}
 We use $C$ to denote several constants that may vary line-by-line. $B(x,r)$ denotes the ball of radius $r$ and center at  $x\in \R^{n}$.  To simplify writing, given any set $A$, by  $\mathbf{A}$ (or $A^{l}$) we denote  the product  $\displaystyle A\times \cdots \times A $ ($l$ times). If $A$ is a Banach space with norm $\|\cdot\|$ then $\mathbf{A}$ is a Banach space with  norm given by the sum. Thus,  in $\mathbb{C}^l$  we use frequently $\mathbf{z}$ instead of  $(z_{1},\ldots,z_{l})$.  Given any complex number $z\in\mathbb{C}$, $\mathrm{Re}z$ and $\mathrm{Im}z$ represents its real and imaginary parts. Also, $\overline{z}$ denotes its complex conjugate.  We set $\big\bracevert\!\! \mathbf{z}\!\!\big\bracevert$  for the vector $(|z_{1}|,\ldots,|z_{l}|)$. This is not to be confused with $|\mathbf{z}|$ which stands for the standard norm of the vector $\mathbf{z}$ in $\mathbb{C}^l$. If $\mathbf{w}$ is a vector with non-negative  real components, we write  $\mathbf{w}\geq \mathbf{0}$.  Given $\mathbf{z}=(z_{1},\ldots,z_{l})\in \mathbb{C}^l$, we write $z_m=x_m+iy_m$, where $x_m=\mathrm{Re}z_m$ and $y_m=\mathrm{Im}z_m$. The differential operators $\partial/\partial z_m$ and $\partial/\partial \overline{z}_m$ are defined by
$$
\dfrac{\partial}{\partial z_m}=\frac{1}{2}\left(\frac{\partial}{\partial x_m} -i\frac{\partial}{\partial y_m}\right), \qquad\dfrac{\partial}{\partial \overline{z}_m}=\frac{1}{2}\left(\frac{\partial}{\partial x_m} +i\frac{\partial}{\partial y_m}\right).
$$ 

Let $\Omega\subset \R^{n}$ be an open set.
To simplify notation, if no confusion is caused we use $\int f\, dx$ instead of  $\int_{\Omega} f\, dx$. The spaces
 $L^{p}=L^{p}(\Omega)$, $1\leq p\leq \infty$, and $W^s_p=W^s_p(\Omega)$ denote the standard Lebesgue and Sobolev spaces. In the case $p=2$, we use the notation $H^s=W^s_2$.  We use $\dot{H}^1=\dot{H}^1(\R^n)$ to denote the homogeneous Sobolev spaces of order 1. For $n\geq3$, $2^*=\frac{2n}{n-2}$ denotes the critical Sobolev exponent. 
 Recall that $D^{1,2}(\Omega)=\{u\in L^{2^{*}}(\Omega); \nabla u \in L^{2}(\Omega)\}$ and $D_{0}^{1,2}(\Omega)$  is	the completation of $\mathcal{C}_{0}^{\infty}(\Omega)$ 
 	with respect to the norm $\left(\int|\nabla u|^{2}\;dx\right)^{\frac{1}{2}}$, or equivalently,  the closure of $\mathcal{C}_{0}^{\infty}(\Omega)$ in $D^{1,2}(\Omega)$. By the Sobolev inequality we have  $D^{1,2}(\R^n)=\dot{H}^{1}(\R^n)$ (with equivalent norms). Since   $D^{1,2}(\R^{n})=D^{1,2}_{0}(\R^{n})$ (see for instance \cite[Lemma 1.2]{ben1996extrema}), we then see that
 	$$
 \dot{H}^{1}(\R^n)=	D^{1,2}(\R^{n})=D^{1,2}_{0}(\R^{n}).
 	$$
Thus we can use each one of these spaces in our arguments to follow.

Given a time interval $I$, the mixed   spaces $L^p(I;L^q(\R^n))$ are endowed with the norm
$$
\|f\|_{L^p(I;L^q)}=\left(\int_I \left(\int_{\R^n}|f(x,t)|^qdx \right)^{\frac{p}{q}} dt \right)^{\frac{1}{p}},
$$
with the obvious modification if either $p=\infty$ or $q=\infty$. When the interval $I$ is implicit and no confusion will be caused we denote  $L^p(I;L^q(\R^n))$ simply by  $L^p(L^q)$ and its norm by $\|\cdot\|_{L^p(L^q)}$. More generally, if $X$ is a Banach space, $L^{p}(I;X)$ represents the $L^p$ space of $X$-valued functions defined on $I$. 

With  $\mathcal{C}_{b}(X)$ we denote the set of bounded continuous functions on $X$. Also, $\mathcal{C}_{c}(X)$ stands for the set of continuous functions on $X$ with compact support. The set  $\mathcal{M}_{+}(X)$ denotes the Banach space of non-negative Radon measures on $X$. Similarly,   $\mathcal{M}_{+}^{b}(X)$ and $\mathcal{M}_{+}^{1}(X)$ represent the spaces of bounded (or finite) and probability Radon measures, respectively. By  $\mathcal{B}(X)$ we denote  the Borel $\sigma$-algebra on $X$. We  write $\nu\ll \mu$ if the measure $\nu$ is absolutely continuous with respect to the measure $\mu$. For any $\mu\in\mathcal{M}_{+}^{b}(X)$, $\|\mu\|:=\mu(X)$ is called the \textit{total mass} of $\mu$.

\subsection{Weak convergence of measures} Here we introduce some notions of convergence of Radon measures. We refer the reader to \cite[Chapter 4, Sections 30-31]{bauer2011measure}  for a more complete discussion about this topic. Let $X$ be a locally compact space. 
A sequence $(\mu_{m})\subset\mathcal{M}_{+}(X)$ is said to converge \textit{vaguely}  to  $\mu  $ in $\mathcal{M}_{+}(X)$, written $\mu_{m}\overset{\ast}{\rightharpoonup} \mu$, provided $\int_{X}f\;d\mu_{m}\to \int_{X}f\;d\mu $, for all $f\in \mathcal{C}_{c}(X)$.  We say that a sequence $(\mu_{m})\subset\mathcal{M}_{+}^{b}(X)$  converges \textit{weakly} to a measure $\mu$ in   $\mathcal{M}_{+}^{b}(X)$, written $\mu_{m} \rightharpoonup \mu$,  if  $\int_{X}f\;d\mu_{m}\to \int_{X}f\;d\mu $, for all $f\in \mathcal{C}_{b}(X)$. A sequence $(\mu_{m}) \subset \mathcal{M}_{+}^{b}(X) $ is said to be \textit{uniformly tight} if, for every $\epsilon>0$ there exists a compact subset $K_{\epsilon}\subset X$ such that $\mu_{m}(X\setminus K_{\epsilon})< \epsilon$, for all $m$. 

We finish this paragraph with an useful result that guarantees the existence of vaguely convergent subsequences.  We say that a set $\mathcal{H}\subset \mathcal{M}_{+}(X)$ is \textit{vaguely bounded} if $\sup_{\mu\in \mathcal{H} }\left|\int_{X}f\;du\right|<\infty$ for all $f\in \mathcal{C}_{c}(X)$.

  \begin{lem}\label{vagbndconv}
 Let $X$ be  a locally compact space. Then,
 
 \begin{itemize}
 	\item[(i)] every vaguely bounded sequence in $\mathcal{M}_{+}(X)$ contains a vaguely convergent subsequence;
 	\item[(ii)] If $\mu_{m}\overset{\ast}{\rightharpoonup} \mu$ in $\mathcal{M}_{+}(X)$ and $(\|\mu_m\|)$ is bounded, then $\mu$ is finite.
 \end{itemize} 
 \end{lem}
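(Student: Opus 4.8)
The plan is to view $\mathcal{M}_{+}(X)$, via the Riesz representation theorem, as the cone of positive linear functionals on $\mathcal{C}_{c}(X)$, so that vague convergence is nothing but pointwise convergence of functionals; then part~(i) becomes a compactness statement and part~(ii) a lower-semicontinuity statement for the total mass.

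For (i), the first step I would take is to promote vague boundedness to a \emph{local} uniform bound. Given a compact $K\subset X$, Urysohn's lemma provides $\varphi_{K}\in\mathcal{C}_{c}(X)$ with $0\le\varphi_{K}\le1$ and $\varphi_{K}\equiv1$ on $K$. If $f\in\mathcal{C}_{c}(X)$ is supported in $K$, then $|f|\le\|f\|_{\infty}\varphi_{K}$, so positivity of $\mu_{m}$ gives
\[
\left|\int_{X}f\,d\mu_{m}\right|\le\|f\|_{\infty}\int_{X}\varphi_{K}\,d\mu_{m}\le C_{K}\,\|f\|_{\infty},\qquad C_{K}:=\sup_{m}\int_{X}\varphi_{K}\,d\mu_{m}<\infty,
\]
where $C_{K}<\infty$ is precisely vague boundedness tested on the single function $\varphi_{K}$; in particular $\sup_{m}\mu_{m}(K)\le C_{K}$ for every compact $K$. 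Since in all our applications $X$ is second countable, $\mathcal{C}_{c}(X)$ is separable, so the next step is to fix a countable dense family $\{f_{j}\}_{j\ge1}$ and, using that $(\int f_{j}\,d\mu_{m})_{m}$ is a bounded real sequence for each $j$, extract by a diagonal procedure a subsequence $(\mu_{m_{p}})$ along which $\int f_{j}\,d\mu_{m_{p}}$ converges for all $j$. A routine $\varepsilon/3$ argument, applying the local bound above to a cutoff of a compact neighbourhood of $\operatorname{supp} f$, then shows $(\int f\,d\mu_{m_{p}})_{p}$ is Cauchy for \emph{every} $f\in\mathcal{C}_{c}(X)$. The limit $L(f):=\lim_{p}\int f\,d\mu_{m_{p}}$ is linear and positive, so Riesz representation yields $\mu\in\mathcal{M}_{+}(X)$ with $L(f)=\int f\,d\mu$, that is, $\mu_{m_{p}}\overset{\ast}{\rightharpoonup}\mu$.

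For (ii), set $M:=\sup_{m}\|\mu_{m}\|=\sup_{m}\mu_{m}(X)<\infty$. For a compact $K\subset X$ and $\varphi_{K}$ as above,
\[
\mu(K)\le\int_{X}\varphi_{K}\,d\mu=\lim_{m}\int_{X}\varphi_{K}\,d\mu_{m}\le\limsup_{m}\mu_{m}(X)\le M .
\]
I would then take the supremum over all compact $K$ and invoke inner regularity of the Radon measure $\mu$ (equivalently, exhaust $X$ by $\varphi_{j}\uparrow 1$ with $\varphi_{j}\in\mathcal{C}_{c}(X)$, e.g.\ $\varphi_{j}(x)=\chi(x/j)$ on $\R^{n}$, and pass to the limit by monotone convergence), obtaining $\mu(X)\le M<\infty$, so $\mu$ is finite.

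The only step I expect to be genuinely delicate is the subsequence extraction in (i) when $X$ is not second countable, since then the countable dense family is unavailable; in that generality one replaces the diagonal argument by a Banach--Alaoglu/Tychonoff argument on the product $\prod_{K}[-C_{K},C_{K}]$ indexed by the compact subsets of $X$, exactly as in \cite[Ch.~4, \S\S30--31]{bauer2011measure}. As here $X=\R^{n}$ (or an open subset of it), the elementary version is enough, and everything else is a straightforward assembly of Urysohn's lemma, positivity, the Riesz representation theorem and monotone convergence.
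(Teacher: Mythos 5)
The paper's own ``proof'' of this lemma is merely the citation to Bauer (Theorems 30.6 and 31.2), so you are supplying an argument the paper leaves to the reference. Your route is the standard one behind Bauer's results: for part (i), positivity and Urysohn's lemma upgrade vague boundedness to a uniform local bound $\sup_m\mu_m(K)\le C_K$ on each compact $K$; separability of $\mathcal{C}_c(X)$ (valid here since $X=\R^6$, and more generally for second-countable $X$) permits a diagonal extraction; and Riesz representation turns the limiting positive functional back into a Radon measure. For part (ii), testing against Urysohn cutoffs and passing to the supremum by inner regularity (equivalently, monotone convergence along an exhaustion $\varphi_j\uparrow 1$) yields $\mu(X)\le\sup_m\|\mu_m\|<\infty$. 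You also rightly flag that the fully general locally compact case would need a Tychonoff/Alaoglu argument, which is exactly what Bauer does. The one step that deserves another sentence is the $\varepsilon/3$ argument in (i): the estimate $\bigl|\int(f-f_j)\,d\mu_{m_p}\bigr|\le C\|f-f_j\|_\infty$ with a constant $C$ independent of $p$ requires that $f$ and the approximant $f_j$ share a fixed compact support, so the countable dense family should be constructed accordingly, e.g.\ as $\{g_i\varphi_n\}_{i,n}$ with $\{g_i\}$ sup-norm dense in $\mathcal{C}_c(X)$ and $\{\varphi_n\}$ an exhaustion by Urysohn cutoffs; your phrase ``a cutoff of a compact neighbourhood of $\mathrm{supp}\,f$'' points at the right fix, but the choice of the dense family should be made explicit.
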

\begin{proof}
See Theorems 31.2 and 30.6 in \cite{bauer2011measure}.
\end{proof}

\subsection{Some consequences of our assumptions} Here, we will present some consequences of our assumptions \ref{H1}-\ref{H8}. We start with the following.

\begin{lem}\label{estdifF} 
Assume that  \textnormal{\ref{H1}-\ref{H7}}   hold.  
\begin{itemize}
	\item[(i)] We have
	\begin{equation}\label{estdifFeq}
	\begin{split}
	\left|\mathrm{ Re}\,F(\mathbf{z})-\mathrm{ Re}\,F(\mathbf{z}')\right|&\leq  C \sum_{m=1}^{l}\sum_{j=1}^{l}(|z_{j}|^{2}+|z_{j}'|^{2})|z_{m}-z_{m}'|.
	\end{split}
	\end{equation}
	In particular,
	\begin{equation*}
	|\mathrm{Re}\,F(\mathbf{z})|\leq C \sum_{j=1}^{l}|z_{j}|^{3}.
	\end{equation*}
	\item[(ii)] Let $\mathbf{u}$ be a complex-valued function defined on $\R^n$. Then, 
	\begin{equation*}
	\mathrm{Re}\sum_{k=1}^{l}f_{k}(\mathbf{u})\overline{u}_{k}=\mathrm{Re}[3F(\mathbf{u})].
	\end{equation*}
	\item[(iii)] We have
	\begin{equation}\label{fkreal}
	f_{k}({x})=\frac{\partial F}{\partial x_{k}}({x}).
	\end{equation}
	In addition,  $F$ is positive on the positive cone of $\R^l$.
\end{itemize}
 \end{lem}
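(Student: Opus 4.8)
The plan is to derive all three items from the structural hypotheses \ref{H1}--\ref{H7} together with the homogeneity encoded in \ref{H5}. First I would record the consequence of \ref{H5}: since $F$ is homogeneous of degree $3$, Euler's identity for the Wirtinger derivatives gives
\begin{equation*}
\sum_{k=1}^{l}\left(z_k\frac{\partial F}{\partial z_k}(\mathbf z)+\overline{z}_k\frac{\partial F}{\partial \overline z_k}(\mathbf z)\right)=3F(\mathbf z),
\end{equation*}
and moreover the first-order derivatives $\partial F/\partial z_k$, $\partial F/\partial \overline z_k$ are homogeneous of degree $2$. Combining this with \ref{H3}, which writes $f_k=\partial F/\partial\overline z_k+\overline{\partial F/\partial z_k}$, one gets for item (ii): $\sum_k f_k(\mathbf u)\overline u_k=\sum_k \overline u_k\,\partial F/\partial\overline z_k(\mathbf u)+\sum_k\overline u_k\,\overline{\partial F/\partial z_k(\mathbf u)}$, and taking real parts, the second sum is the conjugate of $\sum_k u_k\,\partial F/\partial z_k(\mathbf u)$, so $\mathrm{Re}\sum_k f_k(\mathbf u)\overline u_k=\mathrm{Re}\big(\sum_k \overline u_k\partial F/\partial\overline z_k+\sum_k u_k\partial F/\partial z_k\big)=\mathrm{Re}[3F(\mathbf u)]$ by Euler. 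This is essentially bookkeeping with Wirtinger calculus.

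For item (i), the idea is that by \ref{H5} the first derivatives of $F$ are homogeneous of degree $2$, and by \ref{H2} (applied with $\mathbf z'=\mathbf 0$, using \ref{H1}) they satisfy $|\partial F/\partial z_m(\mathbf z)|+|\partial F/\partial\overline z_m(\mathbf z)|\le C\sum_j|z_j|^2$; more precisely \ref{H2} gives the Lipschitz-type bound $|\partial F/\partial z_m(\mathbf z)-\partial F/\partial z_m(\mathbf z')|\le C\sum_j|z_j-z_j'|$ for the $f_k$'s, hence (via \ref{H3}) for the derivatives of $F$, but combined with degree-$2$ homogeneity one upgrades this to $|\partial F/\partial z_m(\mathbf z)-\partial F/\partial z_m(\mathbf z')|\le C\sum_j(|z_j|+|z_j'|)|z_j-z_j'|$. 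Then write $\mathrm{Re}\,F(\mathbf z)-\mathrm{Re}\,F(\mathbf z')$ as the integral along the segment $\mathbf z(t)=\mathbf z'+t(\mathbf z-\mathbf z')$, $t\in[0,1]$, of $\frac{d}{dt}\mathrm{Re}\,F(\mathbf z(t))=2\,\mathrm{Re}\sum_m\frac{\partial F}{\partial z_m}(\mathbf z(t))(z_m-z_m')$ (the factor from combining $\partial/\partial z_m$ and $\partial/\partial\overline z_m$ terms for a real-valued-along-real-directions computation), and estimate the integrand by $C\sum_m(\sum_j|z_j(t)|)\,|z_m-z_m'|\le C\sum_m\sum_j(|z_j|+|z_j'|)|z_m-z_m'|$, which after absorbing with $ab\le a^2+b^2$ yields the stated $\sum_m\sum_j(|z_j|^2+|z_j'|^2)|z_m-z_m'|$. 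Taking $\mathbf z'=\mathbf 0$ and using \ref{H1} gives the pointwise cubic bound $|\mathrm{Re}\,F(\mathbf z)|\le C\sum_j|z_j|^3$.

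For item (iii), restricting to real arguments $x=(x_1,\dots,x_l)\in\R^l$ and using that on $\R^l$ one has $\partial/\partial x_k=\partial/\partial z_k+\partial/\partial\overline z_k$, together with \ref{H7} (which makes $F$ real on $\R^l$, so $\overline{\partial F/\partial z_k}=\partial F/\partial\overline z_k$ when evaluated at real points — here one uses that $F$ real-valued on the reals forces the two Wirtinger derivatives to be complex conjugates there), \ref{H3} gives $f_k(x)=\partial F/\partial\overline z_k(x)+\overline{\partial F/\partial z_k(x)}=\partial F/\partial\overline z_k(x)+\partial F/\partial z_k(x)=\partial F/\partial x_k(x)$, which is \eqref{fkreal}. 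Finally, positivity of $F$ on the positive cone follows by integrating this identity: for $y\ge\mathbf 0$, $F(y)=F(y)-F(\mathbf 0)=\int_0^1\frac{d}{dt}F(ty)\,dt=\int_0^1\sum_k y_k\,\frac{\partial F}{\partial x_k}(ty)\,dt=\int_0^1\sum_k y_k\,f_k(ty)\,dt\ge 0$, since each $y_k\ge0$ and, by \ref{H7}, $f_k(ty)\ge0$ because $ty$ lies in the positive cone; here \ref{H1} supplies $F(\mathbf 0)=0$ (as $F$ homogeneous of degree $3$ also forces $F(\mathbf 0)=0$). I expect the main obstacle to be the bookkeeping in item (i): correctly tracking the factors of $2$ and the real/imaginary parts in the Wirtinger chain rule along the segment, and justifying the homogeneity-improved Lipschitz estimate $|\nabla F(\mathbf z)-\nabla F(\mathbf z')|\le C\sum_j(|z_j|+|z_j'|)|z_j-z_j'|$ cleanly from \ref{H2} and \ref{H5} (for instance by a scaling/dyadic argument or by first proving it for $|\mathbf z|\sim|\mathbf z'|\sim1$ and then rescaling).
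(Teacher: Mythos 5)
The paper offers no proof of this lemma (it cites Lemmas 2.10, 2.11 and 2.13 of \cite{NoPa2}), so I judge your argument on its own terms. Part (ii) is correct: Euler's identity in Wirtinger form plus \textnormal{\ref{H3}} is exactly the right computation. Part (i) has the right skeleton (fundamental theorem of calculus along the segment plus a quadratic pointwise bound on the first derivatives), but the written chain of estimates is broken in two places. First, the derivative formula: $\tfrac{d}{dt}\mathrm{Re}\,F(\mathbf z(t))=2\,\mathrm{Re}\sum_m\tfrac{\partial F}{\partial z_m}(\mathbf z(t))(z_m-z_m')$ is only valid when $F$ itself is real-valued on $\C^l$, which is not assumed; the correct identity, via \textnormal{\ref{H3}} (i.e. $f_m=2\,\partial(\mathrm{Re}F)/\partial\overline z_m$), is $\tfrac{d}{dt}\mathrm{Re}\,F(\mathbf z(t))=\mathrm{Re}\sum_m f_m(\mathbf z(t))\,\overline{(z_m-z_m')}$. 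Second, and more seriously, you bound the integrand by $C\sum_m\bigl(\sum_j|z_j(t)|\bigr)|z_m-z_m'|$, which is linear in $|z_j(t)|$ where it must be quadratic ($f_m$ is homogeneous of degree $2$), and no use of $ab\le a^2+b^2$ converts the quadratic quantity $(|z_j|+|z_j'|)|z_m-z_m'|$ into the cubic quantity $(|z_j|^2+|z_j'|^2)|z_m-z_m'|$ (for small arguments the latter is smaller). The fix is immediate with the bound $|f_m(\mathbf w)|\le C\sum_j|w_j|^2$ that you state earlier; note also that deriving that bound from \textnormal{\ref{H1}}--\textnormal{\ref{H2}} requires $\nabla f_m(\mathbf 0)=0$, which comes from the degree-$2$ homogeneity \eqref{fkhomog2}, i.e. from \textnormal{\ref{H3}}+\textnormal{\ref{H5}}, not from \textnormal{\ref{H2}} alone.

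Part (iii) contains a genuine gap: the principle ``$F$ real-valued on $\R^l$ forces $\overline{\partial F/\partial z_k}=\partial F/\partial\overline z_k$ at real points'' is false. The Wirtinger derivatives at a real point involve the derivative of $F$ in the imaginary directions, which probes values of $F$ off $\R^l$, where $F$ need not be real; e.g. $F(z)=i\,\mathrm{Im}\,z$ is real on $\R$ yet $\overline{\partial F/\partial z}=\tfrac12\ne-\tfrac12=\partial F/\partial\overline z$ there. Writing $F=G+iH$ with $G,H$ real, \textnormal{\ref{H3}} gives $f_k(x)=2\,\partial G/\partial\overline z_k(x)=\partial G/\partial x_k(x)+i\,\partial G/\partial y_k(x)$, while $\partial F/\partial x_k(x)=\partial G/\partial x_k(x)$ for real $x$ by \textnormal{\ref{H7}}; hence \eqref{fkreal} is \emph{equivalent} to $\partial(\mathrm{Re}F)/\partial y_k(x)=0$ at real points, and this does not follow from \textnormal{\ref{H7}}. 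You need another hypothesis. One route, on the positive cone (which is where the paper uses \eqref{fkreal}): the pointwise form of \textnormal{\ref{H6}} gives $\mathrm{Re}\,F(\mathbf z)\le F(|z_1|,\ldots,|z_l|)$, so for $x\ge\mathbf 0$ the function $y_k\mapsto \mathrm{Re}\,F(x+iy_ke_k)-F(x_1,\ldots,|x_k+iy_k|,\ldots,x_l)$ is nonpositive and vanishes at $y_k=0$; since $\partial_{y_k}|x_k+iy_k|=0$ at $y_k=0$, the first-order condition at this interior maximum yields exactly $\partial(\mathrm{Re}F)/\partial y_k(x)=0$. Once \eqref{fkreal} is secured, your integration argument correctly gives $F(y)\ge0$ for $y\ge\mathbf 0$ (which is the nonnegativity the paper actually uses); strict positivity on the open cone would need an extra word.
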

 \begin{proof}
 The reader will find the details in \cite{NoPa2}. More precisely, see Lemmas 2.10, 2.11, and 2.13.
 \end{proof}

 Now by using assumptions \ref{H3} and \ref{H4} we are able  to derive  a Gauge invariant condition  satisfied by the non-linear interaction terms in \eqref{system1}.
 We start with the following invariant property of $\mathrm{Re}\,F$.
 
 \begin{lem}\label{ReFinvari2}
 Assume that \textnormal{\ref{H3}} and  \textnormal{\ref{H4}} hold. Let $\theta \in \R$ and $\zb \in \C^{l}$. Then, 
     \begin{equation*}
\mathrm{Re}\,F\left(e^{i\frac{\sigma_{1}}{2}\theta  }z_{1},\ldots,e^{i\frac{\sigma_{l}}{2}\theta  }z_{l}\right)=\mathrm{Re}\,F(\zb).
\end{equation*}
\end{lem}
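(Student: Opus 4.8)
The plan is to differentiate the claimed identity with respect to $\theta$ and reduce it to assumption \ref{H4}. Define $\Phi(\theta) := \mathrm{Re}\,F\!\left(e^{i\frac{\sigma_1}{2}\theta}z_1,\ldots,e^{i\frac{\sigma_l}{2}\theta}z_l\right)$ for fixed $\zb\in\C^l$; since $F$ is $C^1$ (a consequence of \ref{H2}, which gives Lipschitz bounds on the first derivatives of the $f_k$, hence of $F$ via \ref{H3}), the function $\Phi$ is differentiable in $\theta$. The goal is to show $\Phi'(\theta)\equiv 0$, which together with $\Phi(0)=\mathrm{Re}\,F(\zb)$ yields the statement.

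First I would compute $\Phi'(\theta)$ using the Wirtinger-calculus chain rule. Writing $w_k(\theta)=e^{i\frac{\sigma_k}{2}\theta}z_k$, one has $\frac{d}{d\theta}w_k = i\frac{\sigma_k}{2}w_k$ and $\frac{d}{d\theta}\overline{w}_k = -i\frac{\sigma_k}{2}\overline{w}_k$, so
\begin{equation*}
\Phi'(\theta)=\mathrm{Re}\sum_{k=1}^{l}\left[\frac{\partial F}{\partial z_k}(\wb)\cdot i\frac{\sigma_k}{2}w_k + \frac{\partial F}{\partial \overline{z}_k}(\wb)\cdot\left(-i\frac{\sigma_k}{2}\overline{w}_k\right)\right].
\end{equation*}
Next I would invoke \ref{H3}, namely $f_k(\wb)=\frac{\partial F}{\partial\overline{z}_k}(\wb)+\overline{\frac{\partial F}{\partial z_k}(\wb)}$, to rewrite the bracketed sum in terms of the $f_k$. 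Taking real parts and using $\mathrm{Re}(i\zeta)=-\mathrm{Im}\,\zeta$, the two terms combine: the contribution of $\frac{\partial F}{\partial\overline{z}_k}(\wb)\overline{w}_k$ and of $\overline{\frac{\partial F}{\partial z_k}(\wb)}\cdot w_k = \overline{\frac{\partial F}{\partial z_k}(\wb)\,\overline{w}_k}$ have opposite-signed contributions to the real part but, because taking the conjugate flips the sign of the imaginary part, they reinforce. A careful bookkeeping should give
\begin{equation*}
\Phi'(\theta)=-\,\mathrm{Im}\sum_{k=1}^{l}\frac{\sigma_k}{2}\,f_k(\wb)\,\overline{w}_k,
\end{equation*}
or a harmless scalar multiple thereof.

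At this point \ref{H4} applies directly to the vector $\wb=(w_1,\ldots,w_l)\in\C^l$: it asserts $\mathrm{Im}\sum_{k=1}^{l}\sigma_k f_k(\wb)\overline{w}_k=0$. Hence $\Phi'(\theta)=0$ for every $\theta\in\R$, so $\Phi$ is constant and equals $\Phi(0)=\mathrm{Re}\,F(\zb)$, which is exactly the assertion.

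The only real subtlety — the step I expect to need the most care — is the algebraic manipulation turning the Wirtinger chain-rule expression into precisely $\mathrm{Im}\sum_k \sigma_k f_k(\wb)\overline{w}_k$ using \ref{H3}; one must track the conjugates and the factors of $i$ accurately so that the two Wirtinger terms do not cancel but rather assemble into the single imaginary-part expression that \ref{H4} annihilates. (An equivalent, perhaps cleaner, route avoids differentiation entirely: write $F = G + \overline{H}$ where, by \ref{H3}, $f_k = \partial_{\overline z_k}F$ essentially packages the relevant derivatives, and observe that $\mathrm{Re}\,F$ is built from quantities whose $\theta$-derivative is governed by $\mathrm{Im}\sum_k \sigma_k f_k \overline{z}_k$; but the differentiation argument above is the most direct.) Everything else is routine, and no homogeneity or positivity hypothesis beyond \ref{H3}–\ref{H4} is needed.
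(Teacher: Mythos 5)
Your proposal is correct and follows essentially the same route as the paper: define $h(\theta)=F(\wb)$ with $w_k=e^{i\sigma_k\theta/2}z_k$, differentiate via the Wirtinger chain rule, conjugate the $\partial F/\partial w_k$ term so that \ref{H3} assembles the two contributions into $\tfrac12\mathrm{Im}\sum_k\sigma_k f_k(\wb)\overline{w}_k$, and conclude by \ref{H4}. The only discrepancy is the overall sign of $\Phi'(\theta)$ (the correct constant is $+\tfrac12$ rather than $-\tfrac12$), which you already flagged as a harmless scalar and which is irrelevant since the expression vanishes.
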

\begin{proof}
Denote by  $\mathbf{w}$ the vector $(w_{1},\ldots,w_{l}):=\left(e^{i\frac{\sigma_{1}}{2}\theta  }z_{1},\ldots,e^{i\frac{\sigma_{l}}{2}\theta  }z_{l}\right)$ and let  $h(\theta):=F\left(\mathbf{w}\right)$. By the chain rule,
\begin{equation}
	\begin{split}\label{derh}
	\frac{dh}{d\theta}&=\sum_{k=1}^{l}\frac{\partial F}{\partial w_{k}}(\mathbf{w})\frac{\partial w_{k} }{\partial \theta}+\sum_{k=1}^{l}\frac{\partial F }{\partial \overline{w}_{k}}(\mathbf{w})\frac{\partial \overline{w}_{k} }{\partial \theta}\\
	&=\sum_{k=1}^{l}\frac{\partial F}{\partial w_{k}}(\mathbf{w})\left(\frac{\sigma_{k}}{2} i\right)e^{i\frac{\sigma_k}{2}\theta  }z_{k}+\sum_{k=1}^{l}\frac{\partial F }{\partial \overline{w}_{k}}(\mathbf{w})\left(-\frac{\sigma_{k}}{2} i\right)\overline{e^{i\frac{\sigma_{k}}{2}\theta  }z_{k}}\\
	&=\sum_{k=1}^{l}\overline{\overline{\frac{\partial F}{\partial w_{k}}}(\mathbf{w})\left(-\frac{\sigma_{k}}{2} i\right)\overline{w}_{k}}+\sum_{k=1}^{l}\frac{\partial F }{\partial \overline{w}_{k}}(\mathbf{w})\left(-\frac{\sigma_{k}}{2} i\right)\overline{w}_{k}.
	\end{split}
\end{equation}
Taking the real part on both sides  of \eqref{derh} and using \ref{H3} we obtain
\begin{equation*}
	\begin{split}
	\mathrm{Re}\frac{dh}{d\theta}
	&=\frac{1}{2}\mathrm{Im}\sum_{k=1}^{l} \sigma_k f_{k}(\mathbf{w})\overline{w}_k=0,
	\end{split}
\end{equation*}
which implies the desired.
\end{proof}  

With the previous result in hand we prove the following.
\begin{lem}\label{H34impGC}
Under the assumptions of Lemma \ref{ReFinvari2}. The functions $f_{k}$, $k=1,\ldots, l$, satisfy  the following  Gauge   condition
\begin{equation}\label{gaugeCon}
f_{k}\left(e^{i\frac{\sigma_{1}}{2}\theta }z_{1},\ldots,e^{i\frac{\sigma_{l}}{2}\theta }z_{l}\right)=e^{i\frac{\sigma_{k}}{2}\theta }f_{k}(\mathbf{z}). \tag{GC}
\end{equation}
\end{lem}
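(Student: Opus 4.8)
The plan is to differentiate the Gauge condition \eqref{gaugeCon} with respect to $\theta$ and reduce it to the invariance of $\mathrm{Re}\,F$ already established in Lemma \ref{ReFinvari2}, together with the potential structure \ref{H3}. First I would fix $\zb\in\C^l$, set $\mathbf{w}=\mathbf{w}(\theta):=\left(e^{i\frac{\sigma_{1}}{2}\theta}z_{1},\ldots,e^{i\frac{\sigma_{l}}{2}\theta}z_{l}\right)$, and for each $k$ introduce the auxiliary function
\begin{equation*}
g_k(\theta):=e^{-i\frac{\sigma_k}{2}\theta}f_k(\mathbf{w}(\theta)).
\end{equation*}
The claim \eqref{gaugeCon} is exactly $g_k(\theta)=g_k(0)=f_k(\zb)$ for all $\theta\in\R$, so it suffices to show $g_k'(\theta)\equiv 0$.

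Differentiating and using the chain rule together with $\partial w_j/\partial\theta=\frac{\sigma_j}{2}i\,w_j$ and $\partial\overline{w}_j/\partial\theta=-\frac{\sigma_j}{2}i\,\overline{w}_j$, one gets
\begin{equation*}
g_k'(\theta)=e^{-i\frac{\sigma_k}{2}\theta}\left[-\frac{\sigma_k}{2}i\,f_k(\mathbf{w})+\sum_{j=1}^{l}\frac{\partial f_k}{\partial w_j}(\mathbf{w})\frac{\sigma_j}{2}i\,w_j-\sum_{j=1}^{l}\frac{\partial f_k}{\partial \overline{w}_j}(\mathbf{w})\frac{\sigma_j}{2}i\,\overline{w}_j\right].
\end{equation*}
Now I would substitute \ref{H3}, namely $f_k=\frac{\partial F}{\partial\overline{z}_k}+\overline{\frac{\partial F}{\partial z_k}}$, and use the commutation of mixed Wirtinger derivatives ($\frac{\partial}{\partial w_j}\frac{\partial F}{\partial\overline{w}_k}=\frac{\partial}{\partial\overline{w}_k}\frac{\partial F}{\partial w_j}$, and similarly for the conjugated block) to recognize the bracketed expression as $-i\frac{\partial}{\partial\overline{w}_k}$ and $-i\,\overline{\partial/\partial w_k}$ applied to the quantity $\Phi(\mathbf{w}):=\sum_{j}\frac{\sigma_j}{2}\left[\frac{\partial F}{\partial w_j}w_j - \frac{\partial F}{\partial\overline{w}_j}\overline{w}_j\right]$, up to the $-\frac{\sigma_k}{2}i f_k$ term. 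The key point is that $\Phi$ is purely imaginary: indeed, taking the real part of the chain-rule identity \eqref{derh} in the proof of Lemma \ref{ReFinvari2} shows $\mathrm{Re}\,\frac{dh}{d\theta}=\frac{1}{2}\mathrm{Im}\sum_k\sigma_k f_k(\mathbf{w})\overline{w}_k=0$, which is precisely $\mathrm{Re}\,\Phi(\mathbf{w})=0$; more cleanly, Lemma \ref{ReFinvari2} gives $\mathrm{Re}\,F(\mathbf{w}(\theta))=\mathrm{Re}\,F(\zb)$ for all $\theta$, hence $\frac{d}{d\theta}\mathrm{Re}\,F(\mathbf{w})=0$, i.e. $\mathrm{Re}\,\frac{dh}{d\theta}=0$.

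From $\mathrm{Re}\,h(\theta)=\mathrm{Re}\,F(\zb)$ holding for \emph{all} $\zb$ and all $\theta$ (this is the content of Lemma \ref{ReFinvari2}), I can apply the differential operators $\frac{\partial}{\partial\overline{z}_k}$ and $\overline{\frac{\partial}{\partial z_k}}$ directly in the variable $\zb$ to the identity $\mathrm{Re}\,F(\mathbf{w}(\theta;\zb)) = \mathrm{Re}\,F(\zb)$, chaining through $\partial w_m/\partial z_k = e^{i\frac{\sigma_k}{2}\theta}\delta_{mk}$. Doing this and summing the two resulting relations recovers, via \ref{H3}, exactly the identity $e^{i\frac{\sigma_k}{2}\theta}f_k(\zb) = f_k(\mathbf{w}(\theta))$ after a short bookkeeping of the phase factors $e^{\pm i\frac{\sigma_k}{2}\theta}$. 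This is really the same computation as the one producing $g_k'\equiv 0$, just organized as an identity rather than an ODE. I expect the main obstacle to be purely notational: keeping track of which exponential phase multiplies $\frac{\partial F}{\partial\overline{z}_k}$ versus $\overline{\partial F/\partial z_k}$ when one differentiates the invariance of $\mathrm{Re}\,F$ in the $\zb$-variables, and making sure the contributions from the two Wirtinger-type derivatives combine with the correct signs so that the imaginary parts cancel and only the phase-shifted $f_k$ survives. Once the phases are matched, the conclusion is immediate.
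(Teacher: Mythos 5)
Your final paragraph is exactly the paper's proof: starting from the identity $\mathrm{Re}\,F(\mathbf{w}(\theta;\zb))=\mathrm{Re}\,F(\zb)$ from Lemma \ref{ReFinvari2}, apply $\partial/\partial\overline{z}_k$ in $\zb$, chain through $\overline{w}_m=e^{-i\sigma_m\theta/2}\overline{z}_m$ to pick up the phase factor, and recognize $f_k=2\,\partial_{\overline{z}_k}\mathrm{Re}\,F$ from \ref{H3} on both sides. Your preliminary detour through $g_k(\theta)$ and the quantity $\Phi$ works too, but only because Lemma \ref{ReFinvari2} actually forces $\Phi\equiv 0$ (not merely $\mathrm{Re}\,\Phi=0$, which is automatic since $\mathrm{Re}\,F$ is real-valued); you eventually pivot to the direct differentiation anyway, and also note that applying $\partial/\partial\overline{z}_k$ and its conjugate and summing is redundant, since both yield the same expression $\tfrac12 f_k$ when acting on the real function $\mathrm{Re}\,F$.
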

\begin{proof}
	First of all note that from the definition of the complex differential operators we may write
	$$
	f_k(\zb) =2\frac{\partial  }{\partial \overline{z}_{k}}\mathrm{Re} \,F(\mathbf{z}).
	$$
Now, as in the proof of Lemma \ref{ReFinvari2}, let $\mathbf{w}=\left(e^{i\frac{\sigma_{1}}{2}\theta  }z_{1},\ldots,e^{i\frac{\sigma_{l}}{2}\theta  }z_{l}\right)$. Clearly,
\begin{equation}\label{dev0}
\frac{\partial }{\partial \overline{z}_{k}}\mbox{Re}\, F(\mathbf{z})=e^{-i\frac{\sigma_{k}}{2}\theta  }\frac{\partial}{\partial \overline{w}_{k}} \mbox{Re}\,F(\mathbf{w}).
\end{equation}
Hence,
\begin{equation*}
\begin{split}
f_{k}(\mathbf{z})=2e^{-i\frac{\sigma_{k}}{2}\theta}\frac{\partial}{\partial \overline{w}_{k}} \mbox{Re}\,F(\mathbf{w})
=e^{-i\frac{\sigma_{k}}{2}\theta  }f_{k}(\mathbf{w})
=e^{-i\frac{\sigma_{k}}{2}\theta  }f_{k}\left(e^{i\frac{\sigma_{1}}{2}\theta  }z_{1},\ldots,e^{i\frac{\sigma_{l}}{2}\theta  }z_{l}\right),
\end{split}
\end{equation*}
which completes the proof.
\end{proof}

The next fact  is a natural consequence of assumptions \ref{H3}  and \ref{H5}. The nonlinearities $f_{k}$ are homogeneous functions of degree 2, i.e., for any $\mathbf{z}\in \C^{l}$,
\begin{equation}\label{fkhomog2}
    f_{k}(\lambda \mathbf{z})=\lambda^{2} f_{k}( \mathbf{z}), \qquad\forall k=1,\ldots,l, \qquad \lambda>0. 
\end{equation}

We finish this section by presenting an adapted vectorial version of the generalized Brezis-Lieb's lemma (see   \cite[Theorem 2]{brezis1983relation}). We start recalling that  by assumption \ref{H5}, $F(\mathbf{0})=0$ and, for all $\mathbf{a}, \mathbf{b}\in \C^{l}$,
\begin{equation*}
    |F(\mathbf{a}+\mathbf{b})-F(\mathbf{b})|\leq C\sum_{k=1}^{l}\sum_{j=1}^{l}\left(|a_{j}+b_{j}|^{2}+|b_{j}|^{2}\right)|b_{k}|. 
\end{equation*}
In particular $F$ is  continuous and, by
Young's inequality, for any $\epsilon>0$,
\begin{equation}\label{FBL}
  |F(\mathbf{a}+\mathbf{b})-F(\mathbf{b})|\leq \epsilon \varphi(\mathbf{a}) +\psi_{\epsilon}(\mathbf{b}),
\end{equation}
where $\varphi$ and $\psi_{\epsilon}$ are given by the non-negative functions $\varphi(\mathbf{a})=\sum_{j=1}^{l}|a_{j}|^{3}$ and
$\psi_{\epsilon}(\mathbf{b})=C_{\epsilon}\sum_{j=1}^{l}|b_{j}|^{3}$, for some positive constant $C_{\epsilon}$.

\begin{lem}\label{BLLG} Let $\vb_{m}=\ub_{m}-\ub$ be a sequence of measurable functions from  $\R^n$ to $\C$ such that
\begin{enumerate}
    \item[(i)] $\vb_{m}\to \mathbf{0}$ a.e.;
    \item[(ii)] $F(\ub)\in L^{1}(\R^n)$;
    \item[(iii)] $\int \varphi(\vb_{m})(x)\; dx\leq M<\infty$, for some constant $M$, independent of $\epsilon$ and $m$;
    \item[(iv)] $\int \psi_{\epsilon}(\ub)(x)\; dx<\infty$, for any $ \epsilon>0$. 
\end{enumerate}
 Then, as $m\to \infty$,
 \begin{equation*}
     \int |F(\ub_{m})-F(\vb_{m})-F(\ub)|\;dx \to 0.
 \end{equation*}
 \end{lem}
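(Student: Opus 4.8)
The plan is to adapt the classical truncation argument of Brezis and Lieb (cf.\ \cite[Theorem~2]{brezis1983relation}) to the present vector-valued setting, the inequality \eqref{FBL} playing the role of the structural hypothesis on $F$. Fix $\epsilon>0$ and introduce the nonnegative measurable functions
\[
W_{m,\epsilon}:=\Big(\,\big|F(\ub_m)-F(\vb_m)-F(\ub)\big|-\epsilon\,\varphi(\vb_m)\,\Big)^{+}.
\]
I would establish (a) $W_{m,\epsilon}\to 0$ a.e.\ as $m\to\infty$, and (b) a bound $0\le W_{m,\epsilon}\le H_\epsilon$ with $H_\epsilon\in L^1(\R^n)$ independent of $m$. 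Granting (a) and (b), the dominated convergence theorem gives $\int W_{m,\epsilon}\,dx\to 0$; since $t\le (t-s)^{+}+s$ for all $t,s\ge 0$, we have the pointwise estimate $|F(\ub_m)-F(\vb_m)-F(\ub)|\le W_{m,\epsilon}+\epsilon\,\varphi(\vb_m)$, so integrating and using hypothesis (iii),
\[
\int\big|F(\ub_m)-F(\vb_m)-F(\ub)\big|\,dx\ \le\ \int W_{m,\epsilon}\,dx\ +\ \epsilon M .
\]
Letting $m\to\infty$ and then $\epsilon\downarrow 0$ yields the conclusion.

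For (a): since $\vb_m=\ub_m-\ub\to\mathbf 0$ a.e.\ and $F$ is continuous with $F(\mathbf 0)=0$ (assumption \ref{H5}), for a.e.\ $x$ we have $F(\vb_m(x))\to 0$ and $F(\ub_m(x))=F(\ub(x)+\vb_m(x))\to F(\ub(x))$, hence $F(\ub_m(x))-F(\vb_m(x))-F(\ub(x))\to 0$; as also $\varphi(\vb_m(x))=\sum_{j}|v_{m,j}(x)|^{3}\to 0$, we get $W_{m,\epsilon}(x)\to 0$. For (b): writing $\ub_m=\vb_m+\ub$ and applying \eqref{FBL} so that the small parameter multiplies the term built from the vanishing sequence, one obtains $|F(\ub_m)-F(\vb_m)|\le \epsilon\,\varphi(\vb_m)+\psi_\epsilon(\ub)$, whence
\[
\big|F(\ub_m)-F(\vb_m)-F(\ub)\big|\ \le\ \epsilon\,\varphi(\vb_m)+\psi_\epsilon(\ub)+|F(\ub)|,
\]
and therefore $0\le W_{m,\epsilon}\le \psi_\epsilon(\ub)+|F(\ub)|=:H_\epsilon$. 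By hypotheses (ii) and (iv), $H_\epsilon\in L^1(\R^n)$, and it does not depend on $m$, which is what (b) requires.

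The delicate point is step (b): the term $\varphi(\vb_m)$ carrying the vanishing sequence is only bounded in $L^1$ by (iii) and is \emph{not} dominated by a fixed integrable function, so it must be absorbed into the truncation rather than controlled directly. This dictates both the form in which \eqref{FBL} is used --- with $\epsilon$ (and not the large constant $C_\epsilon$) in front of $\varphi(\vb_m)$, exactly as in the scalar Brezis--Lieb lemma --- and the way the difference is split, namely as $[F(\ub_m)-F(\vb_m)]-F(\ub)$, which leaves only the fixed integrable remainder $|F(\ub)|$. Once these choices are made, the remaining estimates (continuity of $F$, the Young-type inequality \eqref{FBL}, and the dominated convergence theorem) are routine.
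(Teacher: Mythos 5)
Your proof is correct and is precisely the truncation argument of Brezis--Lieb (Theorem 2 of \cite{brezis1983relation}), which is exactly the proof the paper invokes while omitting the details. One small remark: you are rightly reading \eqref{FBL} in the form $|F(\mathbf{a}+\mathbf{b})-F(\mathbf{a})|\leq \epsilon\,\varphi(\mathbf{a})+\psi_{\epsilon}(\mathbf{b})$ --- the version that actually follows from Lemma \ref{estdifF} and that the argument requires --- rather than with $-F(\mathbf{b})$ as literally printed, which would fail (take $\mathbf{b}=\mathbf{0}$).
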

\begin{proof}
The proof is similar to that of Theorem 2 in \cite{brezis1983relation}. So we omit the details.
\end{proof}

 \subsection{Local and global well-posedness}  \label{knowresults}
  In \cite{NoPa2} we studied \eqref{system1} by assuming \ref{H1}-\ref{H8}, but with \ref{H4} replaced by \eqref{antH4}. From the point of view of well-posedness in $H^1$ the same results can also be obtained here in dimension $1\leq n\leq6$. Indeed, to give a precise statement we introduce the space
\begin{equation*}
Y(I):=\begin{cases}
(\mathcal{C}\cap L^{\infty})(I; H^1)\cap L^{12/n}\left(I;W^{1}_{3}\right),\qquad 1\leq n\leq 3,\\
(\mathcal{C}\cap L^{\infty})(I; H^1)\cap L^{2}(I;W^{1}_{2n/(n-2)}),\qquad n\geq 4,
\end{cases}
\end{equation*}
where  $I\subset\R$ is an interval. We have the following results. 
 
 \begin{teore}\label{localexistenceH1} Let $1\leq n\leq 5$. Assume that \textnormal{\ref{H1}} and \textnormal{\ref{H2}} hold. Then for any $r>0$ there exists $T(r)>0$ such that for any $\mathbf{u}_0\in \mathbf{H}^1 $ with $\|\mathbf{u}_0\|_{\mathbf{H}^1}\leq r$,  system \eqref{system1}
 has a unique  solution $\mathbf{u}\in \mathbf{Y}(I)$ with $I=[-T(r),T(r)]$.
 
 In addition, a  blow up alternative also holds, that is, there exist $T_*,T^*\in(0,\infty]$ such that the local solutions can be extend to  $(-T_*,T^*)$.  Moreover if $T_*<\infty$  (respect. $T^*<\infty$), then
$$
\lim_{t\to -T_*}\|\mathbf{u}(t)\|_{\mathbf{H}^1}=\infty, \qquad (respect.\lim_{t\to T^*}\|\mathbf{u}(t)\|_{\mathbf{H}^1}=\infty  ).
$$
\end{teore}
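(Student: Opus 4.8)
The plan is to prove Theorem \ref{localexistenceH1} by a standard fixed-point (contraction mapping) argument based on the Strichartz estimates adapted to the decoupled Schr\"odinger group generated by the operators $e^{it\frac{\gamma_k}{\alpha_k}\Delta}$, exactly as in the scalar case but componentwise. First I would reformulate \eqref{system1} in Duhamel (integral) form: writing $S_k(t)=e^{it(\frac{\gamma_k}{\alpha_k}\Delta - \frac{\beta_k}{\alpha_k})}$ for the unitary group associated with the $k$-th equation, a solution is a fixed point of the map
\begin{equation*}
\Phi(\ub)_k(t)=S_k(t)u_{k0}+\frac{i}{\alpha_k}\int_0^t S_k(t-s)f_k(\ub(s))\,ds,\qquad k=1,\ldots,l.
\end{equation*}
Since the zeroth-order term $\beta_k u_k$ can be absorbed in the group (or treated as a bounded perturbation), it is harmless. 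The natural functional setting is the space $\mathbf{Y}(I)$ introduced just before the statement, with $I=[-T,T]$; I would work in the closed ball of radius $2Cr$ in $\mathbf{Y}(I)$, where $C$ is the Strichartz constant.

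The key estimates are the standard Strichartz inequalities for $1\le n\le 5$: for an admissible pair, $\|S_k(\cdot)\phi\|_{L^q(I;L^p)}\lesssim\|\phi\|_{L^2}$, together with its inhomogeneous counterpart and the corresponding $\dot H^1$-level (and $H^1$-level) versions obtained by commuting one derivative through the group. The pair $(2,2n/(n-2))$ appearing in $Y(I)$ for $n\ge4$ is the $L^2$-admissible pair that is standard for energy-subcritical problems; for $n\le 3$ the pair $(12/n,3)$ is the analogous choice. The nonlinear step uses assumptions \ref{H1} and \ref{H2}: by \ref{H1}, $f_k(\mathbf 0)=0$, and by \ref{H2} the $f_k$ and their first derivatives are Lipschitz in the $\C$-differentiable and $\overline{\C}$-differentiable senses, so that $|f_k(\zb)|\lesssim|\zb|^2$ and $|f_k(\zb)-f_k(\zb')|\lesssim(|\zb|+|\zb'|)|\zb-\zb'|$, and likewise for $\nabla f_k(\ub)=\sum_m\big(\partial_{z_m}f_k\,\nabla u_m+\partial_{\overline z_m}f_k\,\nabla\overline{u}_m\big)$. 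Combining H\"older in space and time with these pointwise bounds, one estimates $\|f_k(\ub)\|$ in the dual Strichartz norm by a product of $Y(I)$-norms times a small power of $T$ (this time factor is where the quadratic, hence subcritical for $n\le5$, structure is used), which gives both the self-mapping property of $\Phi$ on the ball and the contraction estimate, for $T=T(r)$ small enough depending only on $r$. Uniqueness in $\mathbf{Y}(I)$ follows from the same contraction estimate; continuity in time with values in $\mathbf H^1$ is built into the space $Y(I)$.

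For the blow-up alternative I would run the usual continuation argument: let $(-T_*,T^*)$ be the maximal interval on which a solution in $\mathbf{Y}$ of every compact subinterval exists (obtained by gluing local solutions, using uniqueness to patch). If $T^*<\infty$ but $\limsup_{t\to T^*}\|\ub(t)\|_{\mathbf H^1}=:M<\infty$, then pick $t_0<T^*$ with $\|\ub(t_0)\|_{\mathbf H^1}\le M$ and $T^*-t_0<T(M)$; applying the local theory with initial time $t_0$ produces a solution on $[t_0,t_0+T(M)]$ extending past $T^*$, contradicting maximality. Hence $\|\ub(t)\|_{\mathbf H^1}\to\infty$ as $t\to T^*$ (and symmetrically at $-T_*$). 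The main obstacle — though it is routine rather than deep — is organizing the nonlinear estimate: one must check that the mixed product of $L^\infty_t H^1_x$ and the Strichartz norm $L^{q}_tW^1_p$ closes under the quadratic nonlinearity $f_k$ for \emph{all} $n\le5$ simultaneously, verifying the relevant H\"older exponents are admissible (in particular that no derivative loss occurs and that the time exponent is strictly positive so the small-$T$ contraction works); this is exactly the point where the restriction $n\le5$ (i.e. $H^1$-subcriticality) enters, and I would simply cite the corresponding computation in \cite{NoPa2}, where the same scheme was carried out under \eqref{antH4} in place of \ref{H4} — a replacement that plays no role whatsoever in the well-posedness argument.
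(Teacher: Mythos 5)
Your proposal is correct and follows essentially the same route as the paper, which simply refers to \cite[Theorem 3.9]{NoPa2} for the standard Strichartz/contraction-mapping argument (noting that only \ref{H1}--\ref{H2} are used, so the change from \eqref{antH4} to \ref{H4} is irrelevant) and obtains the blow-up alternative by the usual continuation argument. Both your Duhamel reformulation and your handling of the quadratic nonlinearity under $H^1$-subcriticality match the cited proof.
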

\begin{proof}
	See  \cite[Theorem 3.9]{NoPa2}.
\end{proof}

\begin{teore}\label{locexistH1n=6} Let $n=6$. Assume that  \textnormal{\ref{H1}} and \textnormal{\ref{H2}} hold. Then for any  $\mathbf{u}_0\in \mathbf{H}^1 $ there exists $T(\mathbf{u}_0)>0$ such that  system \eqref{system1}
 has a unique  solution $\mathbf{u}\in\mathbf{Y}(I)$ with $I=[-T(\mathbf{u}_0),T(\mathbf{u}_0)]$.
 In addition, a  blow up alternative also holds, that is, there exist $T_*,T^*\in(0,\infty]$ such that the local solution can be extend to  $(-T_*,T^*)$.  Moreover if $T_*<\infty$  (respect. $T^*<\infty$), then
$$
\lim_{t\to -T_*}\| \mathbf{u}(t)\|_{\mathbf{L}^{q}(W^{1}_{r})}=\infty, \qquad (respect.\lim_{t\to T^*}\|\mathbf{u}(t)\|_{\mathbf{L}^{q}(W^{1}_{r})}=\infty  ),
$$   
 for any (Schr\"odinger admissible) pair $(q,r)$ satisfying
 $$
 \frac{2}{q}=6\left(\frac{1}{2}-\frac{1}{r}\right), \qquad 2\leq r\leq 3.
 $$
\end{teore}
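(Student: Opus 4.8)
The plan is to solve the Duhamel (integral) formulation of \eqref{system1} by a contraction argument in $\mathbf{Y}(I)$, in the spirit of the Cazenave--Weissler scheme for energy-critical problems. Writing $\delta_k:=\gamma_k/\alpha_k>0$ and dividing the $k$-th equation by $\alpha_k$, a solution must satisfy, for $k=1,\dots,l$,
\begin{equation*}
u_k(t)=e^{it\delta_k\Delta}u_{k0}-i\int_0^t e^{i(t-s)\delta_k\Delta}\Big(\tfrac{\beta_k}{\alpha_k}u_k(s)-\tfrac{1}{\alpha_k}f_k(\ub(s))\Big)\,ds=:\Phi_k(\ub)(t).
\end{equation*}
On $I=[-T,T]$ we use the norm $\|\ub\|_{\mathbf{Y}(I)}=\|\ub\|_{L^\infty(I;\mathbf{H}^1)}+\|\ub\|_{L^2(I;W^1_3)}$; recall that for $n=6$ one has $2^*=3$, so $(q,r)=(2,3)$ is the double-endpoint Schr\"odinger-admissible pair, and the endpoint Strichartz estimates are available since $n\ge 3$. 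Since each $f_k$ is homogeneous of degree $2$ (see \eqref{fkhomog2}), its first derivatives are homogeneous of degree $1$ and, by \ref{H1}--\ref{H2}, globally Lipschitz on $\C^l$; in particular $|\partial_{z_m}f_k(\mathbf{z})|+|\partial_{\overline z_m}f_k(\mathbf{z})|\le C|\mathbf{z}|$, so that the chain rule and the mean value theorem give the pointwise bounds
\begin{equation*}
|\nabla f_k(\ub)|\le C|\ub|\,|\nabla\ub|,\qquad |f_k(\ub)-f_k(\vb)|\le C(|\ub|+|\vb|)\,|\ub-\vb|,
\end{equation*}
together with $|\nabla(f_k(\ub)-f_k(\vb))|\le C\big(|\ub|\,|\nabla(\ub-\vb)|+|\ub-\vb|\,|\nabla\vb|\big)$.

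Next I would carry out the nonlinear estimates. Using the homogeneous and inhomogeneous Strichartz estimates for $e^{it\delta_k\Delta}$ in $\R^6$, with the dual exponents $(2,3/2)$ as source space, one has
\begin{equation*}
\Big\|\int_0^t e^{i(t-s)\delta_k\Delta}g(s)\,ds\Big\|_{L^\infty(I;H^1)\cap L^2(I;W^1_3)}\lesssim \|g\|_{L^2(I;L^{3/2})}+\|\nabla g\|_{L^2(I;L^{3/2})}.
\end{equation*}
Applying this with $g=f_k(\ub)$, H\"older in space (with $\tfrac23=\tfrac13+\tfrac13$) followed by H\"older in time, and the Sobolev embedding $\dot H^1(\R^6)\hookrightarrow L^3(\R^6)$, yield
\begin{equation*}
\|\nabla f_k(\ub)\|_{L^2(I;L^{3/2})}\lesssim \|\ub\|_{L^\infty(I;\mathbf{H}^1)}\|\ub\|_{L^2(I;W^1_3)},\qquad \|f_k(\ub)\|_{L^2(I;L^{3/2})}\lesssim \|\ub\|_{L^2(I;W^1_3)}\|\ub\|_{L^\infty(I;\mathbf{H}^1)},
\end{equation*}
together with the analogous bilinear bounds for the differences $f_k(\ub)-f_k(\vb)$; the linear terms $\tfrac{\beta_k}{\alpha_k}u_k$ are lower order and, estimated in $L^1(I;\mathbf{H}^1)$, contribute only a factor $\lesssim T\|\ub\|_{L^\infty(I;\mathbf{H}^1)}$.

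The decisive feature of the $H^1$-critical case — and, I expect, the main obstacle — is that these nonlinear estimates carry no positive power of $T$, so $T$ cannot be chosen depending only on $\|\ub_0\|_{\mathbf{H}^1}$. To get around this I would split $\Phi=\Phi_{\mathrm{lin}}+\Phi_{\mathrm{nl}}$ with $\Phi_{\mathrm{lin},k}(t)=e^{it\delta_k\Delta}u_{k0}$, observe that $\|\Phi_{\mathrm{lin}}\|_{L^2(\R;W^1_3)}\lesssim\|\ub_0\|_{\mathbf{H}^1}<\infty$ by global Strichartz, so that $\eta(T):=\|\Phi_{\mathrm{lin}}\|_{L^2([-T,T];W^1_3)}\to 0$ as $T\to 0$ by dominated convergence, and then run the fixed-point argument on the set
\begin{equation*}
\mathcal{B}=\big\{\ub\in\mathbf{Y}(I):\ \|\ub\|_{L^\infty(I;\mathbf{H}^1)}\le 2C\|\ub_0\|_{\mathbf{H}^1},\ \ \|\ub\|_{L^2(I;W^1_3)}\le 2\eta(T)\big\},
\end{equation*}
endowed with the distance induced by $\|\cdot\|_{\mathbf{Y}(I)}$. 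The estimates above show $\Phi(\mathcal{B})\subseteq\mathcal{B}$ and that $\Phi$ is a contraction as soon as $C\big(\|\ub_0\|_{\mathbf{H}^1}\eta(T)+\eta(T)^2+T\big)$ is small enough, which fixes $T=T(\ub_0)>0$. Banach's fixed point theorem then gives a unique solution $\ub\in\mathcal{B}$; a routine continuity argument upgrades it to $\ub\in(\mathcal{C}\cap L^\infty)(I;\mathbf{H}^1)$, and the same estimates give continuous dependence on $\ub_0$. Re-inserting $\ub$ into the Duhamel formula and applying Strichartz once more shows $\ub\in L^q(I;W^1_r)$ for every admissible pair with $2\le r\le3$.

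Finally, the blow-up alternative follows in the usual way. Set $T^*$ (resp. $T_*$) equal to the supremum of the times to which $\ub$ extends forward (resp. backward). Suppose $T^*<\infty$ but $\|\ub\|_{L^q((0,T^*);W^1_r)}<\infty$ for some admissible $(q,r)$ with $2\le r\le3$. Because all such norms control one another through the integral equation and the nonlinear estimates above, $\ub(t)$ would then be Cauchy in $\mathbf{H}^1$ as $t\uparrow T^*$; solving \eqref{system1} with the datum $\lim_{t\uparrow T^*}\ub(t)$ via the local theory just established would extend $\ub$ beyond $T^*$, contradicting the definition of $T^*$. Hence $\|\ub(t)\|_{\mathbf{L}^q(W^1_r)}\to\infty$ as $t\uparrow T^*$, and symmetrically as $t\downarrow -T_*$. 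The only point requiring real care throughout is, as noted, the absence of any gain in $T$ from the nonlinearity: all the required smallness has to be extracted from the free-flow Strichartz tail $\eta(T)$, and one must use the endpoint pair $(2,3)$ correctly, since it is precisely this estimate that makes the quadratic interaction critical — rather than sub- or supercritical — in dimension six.
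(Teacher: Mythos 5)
Your overall scheme (Duhamel plus endpoint Strichartz, with all the smallness extracted from the free-flow tail $\eta(T)=\|e^{it\delta_k\Delta}\ub_0\|_{L^2([-T,T];\mathbf{W}^1_3)}$) is the standard Cazenave--Weissler argument for the energy-critical case, and it is essentially what the paper relies on: the paper gives no proof here, it simply cites \cite[Theorem 3.10]{NoPa2} for local existence and \cite[Theorem 4.5.1]{Cazenave} for the blow-up alternative. However, as written your contraction does not close for large data, and the problem sits exactly at the point you flag as ``the main obstacle.'' Your key bilinear estimate is
$$\|\nabla f_k(\ub)\|_{L^2(I;L^{3/2})}\lesssim \|\ub\|_{L^\infty(I;\mathbf{H}^1)}\,\|\ub\|_{L^2(I;W^1_3)},$$
which pairs the \emph{small} norm with the norm that is only \emph{bounded} (by $\approx C\|\ub_0\|_{\mathbf{H}^1}$ on your set $\mathcal B$). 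To map $\mathcal B$ into itself in the $L^2(I;W^1_3)$ component you need
$\eta(T)+C\cdot 2C_0\|\ub_0\|_{\mathbf{H}^1}\cdot 2\eta(T)\le 2\eta(T)$, i.e.\ $C\|\ub_0\|_{\mathbf{H}^1}\lesssim 1$: both sides scale linearly in $\eta(T)$, so shrinking $T$ does not help, and the same defect appears in the Lipschitz constant of $\Phi$. Your stated smallness condition ``$C(\|\ub_0\|_{\mathbf{H}^1}\eta(T)+\eta(T)^2+T)$ small'' therefore only yields the theorem for small initial data, whereas the statement is for arbitrary $\ub_0\in\mathbf{H}^1$.

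The fix is to choose the H\"older splitting so that the quadratic term is estimated by the square of the norm that can be made small. Use the dual pair $(1,2)$ as source space together with the Sobolev embedding $W^1_3(\R^6)\hookrightarrow L^6(\R^6)$:
$$\|\,|\ub|\,|\nabla\ub|\,\|_{L^1(I;L^2)}\le \|\ub\|_{L^2(I;\mathbf{L}^6)}\|\nabla\ub\|_{L^2(I;\mathbf{L}^3)}\lesssim \|\ub\|_{L^2(I;\mathbf{W}^1_3)}^2,$$
and similarly for $f_k(\ub)$ itself and for differences (here \ref{H2}, i.e.\ the Lipschitz continuity of the first derivatives of $f_k$, gives $|\nabla(f_k(\ub)-f_k(\vb))|\lesssim |\ub|\,|\nabla(\ub-\vb)|+|\ub-\vb|\,|\nabla\vb|$, so the difference estimate is also quadratic in the small norm). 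Then the self-map condition reads $\eta(T)+C\eta(T)^2\le 2\eta(T)$ and the Lipschitz constant is $O(\eta(T))$, both of which hold once $T$ is small enough that $\eta(T)\le 1/(2C)$ --- with no restriction on $\|\ub_0\|_{\mathbf{H}^1}$. The remainder of your argument (recovering all admissible norms a posteriori from the Duhamel formula, and the blow-up alternative by showing that finiteness of one $L^q(W^1_r)$ norm up to $T^*$ forces $\ub(t)$ to be Cauchy in $\mathbf{H}^1$ and allows continuation) is correct once this estimate is repaired. A minor additional remark: you invoke the homogeneity \eqref{fkhomog2}, which in the paper follows from \ref{H3} and \ref{H5}, whereas the theorem assumes only \ref{H1}--\ref{H2}; under \ref{H1}--\ref{H2} alone one gets $|\nabla f_k(\ub)|\lesssim(1+|\ub|)|\nabla\ub|$, whose extra linear piece is harmless since it contributes a positive power of $T$.
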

 \begin{proof}
 	See \cite[Theorem 3.10]{NoPa2} for the local well-posedness. The blow-up alternative can be established by extending the arguments in \cite[Theorem 4.5.1]{Cazenave}. 
 \end{proof}

Note that both results above hold only under assumptions {\ref{H1}} and {\ref{H2}}. To extend the local solutions to global ones we need  \ref{H3} and \ref{H4} to guarantee that the quantities \eqref{Charge} and \eqref{Energy} are conserved by the flow of \eqref{system1}. At this point, it is to be noted that in order to establish de conservation of $Q$, \eqref{antH4} may indeed be replaced by  \ref{H4} (see \cite[Lemma 3.11]{NoPa2} for details). Actually \ref{H4} is a vectorial extension for the well known assumptions for the scalar Schr\"odinger equation with a general nonlinearity (see  \cite[Chapter 3]{Cazenave}).

Using the above mentioned conserved quantities combined with the Gagliardo-Nirenberg inequality it is possible to get an \textit{a priori} bound for the $L^{2}$ and $H^{1}$-norm of a solution. In particular,  for any initial data in  $H^1$ local solutions can be extended to global ones, when $1\leq n\leq 3$. For  $n=4$ and $n=5$ global solutions are obtained if the initial data is sufficiently small. To give a precise description of how small the initial data must be, the ground states solutions take a singular place. In fact, a  standing wave solution for \eqref{system1} is a special solution having the form
\begin{equation}\label{standing}
u_{k}(x,t)=e^{i\frac{\sigma_{k}}{2}\omega t}\psi_{k}(x),\qquad k=1,\ldots,l,
\end{equation}
where $\omega\in \R$ and  $\psi_{k}$ are real-valued functions decaying to zero at infinity, which by Lemma \ref{H34impGC}
satisfy the following semilinear elliptic system
\begin{equation}\label{systemelip}
\displaystyle -\gamma_{k}\Delta \psi_{k}+\left(\frac{\sigma_{k}\alpha_{k}}{2}\omega+\beta_{k}\right) \psi_{k}=f_{k}(\psib),\qquad k=1,\ldots,l.
\end{equation}
The action functional associated to \eqref{systemelip} is  
\begin{equation}\label{FunctionalI}
I(\boldsymbol{\psi})=\frac{1}{2}\left[\sum_{k=1}^{l}\gamma_{k}\|\nabla \psi_{k}\|_{L^2}^{2}+\sum_{k=1}^{l}\left(\frac{\sigma_{k}\alpha_{k}}{2}\omega+\beta_{k}\right)\| \psi_{k}\|_{L^2}^{2}\right]
-\int F(\boldsymbol{\psi})\;dx.
\end{equation}

A {\it ground state solution} for \eqref{systemelip} is a nontrivial solution  that is a  minimum of $I$ among all solutions of \eqref{systemelip}. Before proceeding, it is convenient to introduce   the functionals
\begin{equation}\label{functionalQ}
\mathcal{Q}(\boldsymbol{\psi})=\sum_{k=1}^{l}\left(\frac{\sigma_{k}\alpha_{k}}{2}\omega+\beta_{k}\right)\| \psi_{k}\|_{L^2}^{2},
\end{equation}
\begin{equation}\label{funclKP6}
K(\boldsymbol{\psi})=\sum_{k=1}^{l}\gamma_{k}\|\nabla \psi_{k}\|_{L^2}^{2},\qquad  P(\boldsymbol{\psi})=\int F(\boldsymbol{\psi})\;dx.
\end{equation}
and the set
\begin{equation*}
     \mathcal{P}:=\{\psib\in \mathbf{H}^{1};\, P(\psib)>0\}.
\end{equation*}

These functionals satisfy some useful identities. 
\begin{lem}
\label{identitiesfunctionals}
Assume $1\leq n\leq 5$ and let $\boldsymbol{\psi}$ be a (weak) solution of \eqref{systemelip}. Then,
\begin{equation}
P(\boldsymbol{\psi})=2I(\boldsymbol{\psi}),\label{b}\\
\end{equation}
\begin{equation}
K(\boldsymbol{\psi})=nI(\boldsymbol{\psi}),\label{d}\\
\end{equation}
\begin{equation}
\mathcal{Q}(\boldsymbol{\psi})=(6-n)I(\boldsymbol{\psi}).\label{e}
\end{equation}
\end{lem}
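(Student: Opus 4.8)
The idea is to extract from \eqref{systemelip} two scalar identities — a Nehari-type identity and a Pohozaev-type identity — and then combine them with the definition \eqref{FunctionalI} of $I$ to solve a small linear system for $K(\boldsymbol{\psi})$, $\mathcal{Q}(\boldsymbol{\psi})$ and $P(\boldsymbol{\psi})$ in terms of $I(\boldsymbol{\psi})$.

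\textbf{Step 1 (Nehari identity).} First I would pair the $k$-th equation of \eqref{systemelip} with $\psi_k$, integrate over $\R^n$, and sum in $k$. For $1\le n\le 5$ the quadratic nonlinearity is $H^1$-subcritical, so (after the standard elliptic regularization of a weak solution) this pairing is legitimate with no boundary contributions, and integrating the Laplacian term by parts yields
\[
K(\boldsymbol{\psi})+\mathcal{Q}(\boldsymbol{\psi})=\sum_{k=1}^{l}\int f_{k}(\boldsymbol{\psi})\,\psi_{k}\,dx .
\]
By Lemma \ref{estdifF}(ii), applied to the real-valued vector $\boldsymbol{\psi}$ and using that $F$ is real on $\R^{l}$ (assumption \ref{H7}), the right-hand side equals $3\int F(\boldsymbol{\psi})\,dx=3P(\boldsymbol{\psi})$. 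Hence $K(\boldsymbol{\psi})+\mathcal{Q}(\boldsymbol{\psi})=3P(\boldsymbol{\psi})$.

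\textbf{Step 2 (Pohozaev identity).} Next I would pair the $k$-th equation with the dilation field $x\cdot\nabla\psi_{k}$ and sum in $k$. The linear terms are handled by the classical formulas $\int \Delta u\,(x\cdot\nabla u)\,dx=\frac{n-2}{2}\int|\nabla u|^{2}\,dx$ and $\int u\,(x\cdot\nabla u)\,dx=-\frac{n}{2}\int u^{2}\,dx$; for the nonlinear term, the identity $f_{k}(\boldsymbol{\psi})=\partial F/\partial x_{k}(\boldsymbol{\psi})$ from Lemma \ref{estdifF}(iii) together with the chain rule gives $\sum_{k}f_{k}(\boldsymbol{\psi})\,(x\cdot\nabla\psi_{k})=x\cdot\nabla\!\left[F(\boldsymbol{\psi})\right]$, whose integral over $\R^{n}$ is $-n\int F(\boldsymbol{\psi})\,dx=-nP(\boldsymbol{\psi})$. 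Collecting the three contributions gives $\frac{n-2}{2}K(\boldsymbol{\psi})+\frac{n}{2}\mathcal{Q}(\boldsymbol{\psi})=nP(\boldsymbol{\psi})$.

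\textbf{Step 3 (solving the system).} Recalling from \eqref{FunctionalI} that $I(\boldsymbol{\psi})=\tfrac12 K(\boldsymbol{\psi})+\tfrac12\mathcal{Q}(\boldsymbol{\psi})-P(\boldsymbol{\psi})$, Step 1 gives $I=\tfrac12(K+\mathcal{Q})-\tfrac13(K+\mathcal{Q})=\tfrac16(K+\mathcal{Q})$, so $K+\mathcal{Q}=6I$ and $P=2I$, which is \eqref{b}. Substituting $P=2I$ and $\mathcal{Q}=6I-K$ into the identity of Step 2 yields $(n-2)K+n(6I-K)=4nI$, hence $K=nI$, which is \eqref{d}; and then $\mathcal{Q}=6I-nI=(6-n)I$, which is \eqref{e}. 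The one point requiring care is the rigorous validity of the Pohozaev identity for a merely weak solution: one needs enough regularity and decay of $\boldsymbol{\psi}$ at infinity for all the integrations by parts to hold without surface terms. This follows from a routine elliptic bootstrap — the quadratic nonlinearity sends $\mathbf{H}^{1}$ into $L^{n/(n-2)}$ when $n\le 5$, so $\psi_{k}\in W^{2}_{p}$ for every finite $p$, hence $\psi_k\in C^{1,\alpha}$ with the usual decay coming from the positivity of the zeroth-order coefficients $\tfrac{\sigma_{k}\alpha_{k}}{2}\omega+\beta_{k}$ — and is carried out in detail in \cite{NoPa2}; here I would simply invoke it.
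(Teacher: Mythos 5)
Your argument is correct: the Nehari identity $K+\mathcal{Q}=3P$, the Pohozaev identity $\tfrac{n-2}{2}K+\tfrac{n}{2}\mathcal{Q}=nP$, and the expression $I=\tfrac12K+\tfrac12\mathcal{Q}-P$ form a linear system whose unique solution is exactly \eqref{b}--\eqref{e}, and I have checked the algebra. The paper itself merely cites \cite[Lemma 4.5]{NoPa2} for this lemma, and the Nehari/Pohozaev route you take (together with the elliptic-bootstrap justification for applying Pohozaev to a weak solution, relying on \ref{H7} to keep everything real and on Lemma \ref{estdifF}(ii)--(iii) for $\sum_k f_k(\psib)\psi_k=3F(\psib)$ and $\sum_k f_k(\psib)\partial_j\psi_k=\partial_j F(\psib)$) is the standard and evidently intended proof.
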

\begin{proof}
See \cite[Lemma 4.5]{NoPa2}.
\end{proof}

Under our assumptions, ground states for \eqref{systemelip} do exist if the coefficients $\frac{\sigma_{k}\alpha_{k}}{2}\omega+\beta_{k}$ are positive, $k=1,\ldots,l$ and $1\leq n\leq5$.  Thus, if we denote by $\mathcal{G}_n(\omega,\boldsymbol{\beta})$  the set of ground state solutions of \eqref{systemelip}, we have that $\mathcal{G}_n(\omega,\boldsymbol{\beta}) \neq \emptyset$ if $1\leq n\leq 5$. Moreover,   the Gagliardo-Nirenberg-type inequality
\begin{equation}\label{GNI}
    P(\ub)\leq C_n^{opt}\mathcal{Q}(\ub)^{\frac{6-n}{4}}K(\ub)^{\frac{n}{4}}, \qquad \ub\in \mathcal{P},
\end{equation}
holds with the optimal constant   $C_n^{opt}$ given by
\begin{equation}\label{bestCn}
    C_n^{opt}:=\frac{2(6-n)^{\frac{n-4}{4}}}{n^{\frac{n}{4}}}\frac{1}{\mathcal{Q}(\psib)^{\frac{1}{2}}},
\end{equation}
where $\psib \in \mathcal{G}_n(\omega,\boldsymbol{\beta})$, $1\leq n\leq5$ (see \cite[Corollary 4.12]{NoPa2}).

We summarize our global well-posedness results in the following theorem.

\begin{teore}\label{thm:globalwellposH1}
 Assume that \textnormal{\ref{H1}-\ref{H8}} hold and let $\psib$ be a ground state solution of \eqref{systemelip} in $\mathcal{G}_n(1,\boldsymbol{0})$.
	\begin{enumerate}
		\item[(i)] If $1 \leq n\leq 3$ then for any $\mathbf{u}_0\in \mathbf{H}^{1}$,  system \eqref{system1} has a unique  solution $\mathbf{u}\in \mathbf{Y}(\R)$.
		\item[(ii)] Assume $n=4$. Then for any $\mathbf{u}_0\in \mathbf{H}^{1}$ satisfying
			\begin{equation}\label{L2GSCond}
		Q(\mathbf{u}_0)<Q(\psib),	
		\end{equation}
		system \eqref{system1} has a unique  solution $\mathbf{u}\in \mathbf{Y}(\R)$.
		\item[(iii)] Assume $n=5$.  Suppose that $\mathbf{u}_0\in \mathbf{H}^{1}$ satisfies  
		\begin{equation}\label{conditionsharp1}
		Q(\mathbf{u}_0)E(\mathbf{u}_0)<Q(\psib)\mathcal{E}(\psib),
		\end{equation}
		and
			\begin{equation}\label{conditionsharp2}
		Q(\mathbf{u}_0)K(\mathbf{u}_0)<Q(\psib)K(\psib),
		\end{equation}
		where $\mathcal{E}$ is the energy defined in \eqref{Energy} with $\beta_k=0$, $k=1,\ldots,l$. Then system \eqref{system1} has a unique  solution $\mathbf{u}\in \mathbf{Y}(\R)$.   
	\end{enumerate}
\end{teore}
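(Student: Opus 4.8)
The plan is to invoke the blow-up alternative of Theorem~\ref{localexistenceH1}: since the charge $Q$ is conserved, $\sum_k\|u_k(t)\|_{L^2}^2$ stays bounded on the maximal interval $(-T_*,T^*)$, so it suffices to produce an a priori bound for $K(\ub(t))=\sum_k\gamma_k\|\nabla u_k(t)\|_{L^2}^2$ that is uniform on $(-T_*,T^*)$; the alternative then forces $T_*=T^*=\infty$ and $\ub\in\mathbf{Y}(\R)$. The engine is one nonlinear estimate. Writing $\ub_+(t):=(|u_1(t)|,\ldots,|u_l(t)|)\geq\mathbf{0}$, assumption~\ref{H6} gives $\big|\mathrm{Re}\int F(\ub(t))\,dx\big|\leq P(\ub_+(t))$, and $P(\ub_+(t))\geq0$ by Lemma~\ref{estdifF}(iii). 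When $P(\ub_+(t))>0$ we have $\ub_+(t)\in\mathcal{P}$, so \eqref{GNI} applies, and combining it with $\mathcal{Q}(\ub_+(t))=Q(\ub(t))=Q(\ub_0)$ (taking $\omega=1$, $\boldsymbol{\beta}=\mathbf{0}$ in the definition of $\mathcal{Q}$) and the diamagnetic inequality $K(\ub_+(t))\leq K(\ub(t))$ yields
\[
\Big|\mathrm{Re}\int F(\ub(t))\,dx\Big|\ \leq\ C_n^{opt}\,Q(\ub_0)^{\frac{6-n}{4}}\,K(\ub(t))^{\frac n4},
\]
which also holds trivially when $P(\ub_+(t))=0$. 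Feeding this into the conservation of the energy \eqref{Energy} and discarding the non-negative term $\sum_k\beta_k\|u_k(t)\|_{L^2}^2$, we arrive at
\[
K(\ub(t))\ \leq\ E(\ub_0)+2\,C_n^{opt}\,Q(\ub_0)^{\frac{6-n}{4}}\,K(\ub(t))^{\frac n4},\qquad t\in(-T_*,T^*).
\]

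For $1\leq n\leq3$ we have $n/4<1$, so a single use of Young's inequality ($K^{n/4}\leq\varepsilon K+C_\varepsilon$) with $\varepsilon$ small enough absorbs the last term into the left-hand side and gives $K(\ub(t))\leq C(Q(\ub_0),E(\ub_0))$, proving (i). For $n=4$ the displayed inequality is affine in $K(\ub(t))$ with slope $2C_4^{opt}Q(\ub_0)^{1/2}=Q(\ub_0)^{1/2}/Q(\psib)^{1/2}$, since $C_4^{opt}=\tfrac12 Q(\psib)^{-1/2}$ by \eqref{bestCn} together with $\mathcal{Q}(\psib)=Q(\psib)$; hypothesis \eqref{L2GSCond} makes this slope strictly less than $1$, so $\big(1-Q(\ub_0)^{1/2}/Q(\psib)^{1/2}\big)K(\ub(t))\leq E(\ub_0)$ is the sought bound, proving (ii).

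The case $n=5$ is the heart of the matter and I would handle it by a continuity (``trapping'') argument. Set $a:=Q(\ub_0)$ and $h(y):=y-2C_5^{opt}a^{1/4}y^{5/4}$ for $y\geq0$, so that the displayed inequality reads $h(K(\ub(t)))\leq E(\ub_0)$ for every $t\in(-T_*,T^*)$. The function $h$ vanishes at $0$, is strictly increasing on $[0,y_0]$ and strictly decreasing on $[y_0,\infty)$ for its unique critical point $y_0$, and one computes $h(y_0)=y_0/5=\max h$. Using \eqref{bestCn} together with the identities of Lemma~\ref{identitiesfunctionals} for a ground state $\psib\in\mathcal{G}_5(1,\mathbf{0})$ — which give $K(\psib)=5I(\psib)$, $P(\psib)=2I(\psib)$, $\mathcal{Q}(\psib)=I(\psib)$, hence $\mathcal{Q}(\psib)=Q(\psib)$, $K(\psib)=5Q(\psib)$ and $\mathcal{E}(\psib)=K(\psib)-2P(\psib)=Q(\psib)$ — a short computation produces the exact matchings $a\,y_0=Q(\psib)K(\psib)$ and $a\,h(y_0)=Q(\psib)\mathcal{E}(\psib)$. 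Thus \eqref{conditionsharp1} says precisely $E(\ub_0)<h(y_0)=\max h$ and \eqref{conditionsharp2} says precisely $K(\ub_0)<y_0$. Since $t\mapsto K(\ub(t))$ is continuous (as $\ub\in\mathcal{C}(I;\mathbf{H}^1)$), starts below $y_0$, and satisfies $h(K(\ub(t)))\leq E(\ub_0)<\max h$ throughout, it can never attain $y_0$; hence $K(\ub(t))<y_0$ on $(-T_*,T^*)$, which is the a priori bound, and (iii) follows.

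I expect the last paragraph to be the main obstacle: one must (a) verify that the abstract thresholds $y_0$ and $h(y_0)$ generated by the sharp constant $C_5^{opt}$ coincide exactly with the ground-state quantities $Q(\psib)K(\psib)$ and $Q(\psib)\mathcal{E}(\psib)$ of \eqref{conditionsharp1}--\eqref{conditionsharp2} — this is where the Pohozaev-type identities of Lemma~\ref{identitiesfunctionals} and the explicit value \eqref{bestCn} are essential — and (b) run the trapping argument carefully, keeping in mind that it is $E$ (not $\mathcal{E}$) that is conserved, the discrepancy $\sum_k\beta_k\|u_k(t)\|_{L^2}^2\geq0$ being harmless precisely because it is discarded in the favorable direction.
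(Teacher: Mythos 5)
Your proof is correct and follows the standard convexity/trapping argument that the paper itself relegates to \cite{NoPa2} (Theorems 3.16 and 5.2); the computations check out, including $C_4^{opt}=\tfrac12 Q(\psib)^{-1/2}$, $y_0=5Q(\psib)^2/Q(\ub_0)$, $h(y_0)=y_0/5$, and the reduction of \eqref{conditionsharp1}--\eqref{conditionsharp2} to $E(\ub_0)<h(y_0)$ and $K(\ub_0)<y_0$ via Lemma \ref{identitiesfunctionals}. One small remark: the hand-made ``trapping'' argument in your part (iii) is exactly the content of the paper's Lemma \ref{supercritcalcase}, and the numerical matchings you work out are precisely those appearing in the proof of Lemma \ref{lemTneg} (there $\gamma = 5Q(\psib)^2/Q(\ub_0)$ and $K(\psib)=5\mathcal{E}(\psib)$), so you have in effect independently re-derived the machinery of Section \ref{sec.blowupn56} -- the blow-up theorem \ref{thm:Blowupn6} is the reverse-inequality companion of exactly this argument.
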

\begin{proof}
	See \cite[Theorems 3.16 and 5.2]{NoPa2}.
\end{proof}

\begin{obs}
\begin{itemize}
\item[(i)] In Section \ref{sec.blowupn56} we will show that under assumption \eqref{conditionsharp1}, condition \eqref{conditionsharp2} is sharp in the sense if inequality has been reversed   and the initial data is radial then the solution must blow up in finite time.
\item[(ii)] In dimension $n=6$, since the existence time in Theorem \ref{locexistH1n=6} depends on the initial data itself, an a priori bound of the local solution is not enough to extend it globally in time.
\end{itemize}
\end{obs}

 \section{Existence of ground states in the $H^{1}$-critical case}\label{sec.exigsn=6}

In this section we are interested in showing the existence of ground state solutions for  \eqref{system1} in the $H^{1}$-critical case. The section can be seen of independent interest since it purely deals with semilinear elliptic equations. In particular assumption \ref{H8} can be dropped here.

For the scalar case, existence of ground-state solutions in the critical case is closely related with  the optimal constant in the critical Sobolev inequality: (see for instance \cite[Theorem 8.3]{Lieb} or \cite[Corollary 1.3]{wang})
\begin{equation}\label{CSI}
\|f\|_{L^{3}}^{2}\leq C\|\nabla f\|_{L^{2}}^{2}, \qquad f\in \dot{H}^{1}(\R^6).
\end{equation}
This was  addressed, for instance, in \cite{talenti} (see also \cite{Lions3}  and \cite[Chapter I, Section 4]{Struwe}) where optimal constant and extremal functions were obtained.

In our case, we first note  from \eqref{e} we must expect non-trivial solutions of \eqref{systemelip} only if 
\begin{equation}\label{betacond}
\frac{\sigma_{k}\alpha_{k}}{2}\omega+\beta_{k}=0,
\end{equation}
which is fulfilled, for instance, if $\omega=0$ and $\boldsymbol{\beta}=\mathbf{0}$.
 Thus, system \eqref{systemelip} and the action functional $I$  reduce to 
\begin{equation}\label{syselip6}
\displaystyle -\gamma_{k}\Delta \psi_{k}=f_{k}(\boldsymbol{\psi})\qquad k=1\ldots,l
\end{equation}
and
\begin{equation}\label{FunclI6}
I(\boldsymbol{\psi})=\frac{1}{2}\sum_{k=1}^{l}\gamma_{k}\|\nabla \psi_{k}\|_{L^2}^{2}
-\int F(\boldsymbol{\psi})\;dx.
\end{equation}

Solutions of \eqref{syselip6} can now be seen as critical points of the action \eqref{FunclI6}. More precisely,  we have the following. 

\begin{defi}
A function $\boldsymbol{\psi}\in \dot{\mathbf{H}}^{1}(\R^{6}) $ is called a solution (weak solution) of \eqref{syselip6} if for any $\mathbf{g}\in \dot{\mathbf{H}}^{1}(\R^{6})$,
\begin{equation}\label{infI}
\displaystyle \gamma_{k} \int \nabla \psi_{k} \cdot \nabla g_{k}\;dx
=\int f_{k}(\boldsymbol{\psi})g_{k}\;dx,\qquad
k=1,\ldots,l.
\end{equation}
\end{defi}

Among all solution of \eqref{syselip6} we single out the ones that minimizes $I$.

\begin{defi}\label{defgroundstate} 
 A solution $\psib\in \dot{\mathbf{H}}^{1}(\R^{6})$ is called a ground state of \eqref{syselip6} if 
\begin{equation*}
I(\psib)=\inf\left\{I(\boldsymbol{\phi}); \boldsymbol{\phi}\in \mathcal{C}\right\},
\end{equation*}
where  $\mathcal{C}$  denotes the set of all non-trivial solutions of \eqref{syselip6}.
We denote by $\mathcal{G}_{6}$ the set of all ground states of \eqref{syselip6}.
\end{defi}

Let us start by observing if $\psib$ is a non-trivial solution of \eqref{syselip6} then the functional $P$ must be positive at $\psib$.

\begin{lem}\label{lemma4.4} Define
$ \D:=\{\psib\in \dot{\mathbf{H}}^{1}(\R^6);\, P(\psib)>0\}$.
Then, $
\mathcal{C}\subset \D
$. 
\end{lem}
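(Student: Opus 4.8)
The plan is to show that any nontrivial weak solution $\psib$ of \eqref{syselip6} satisfies $P(\psib)>0$ by testing the weak formulation \eqref{infI} against $\psib$ itself and invoking the homogeneity of $F$. Concretely, take $\mathbf{g}=\psib$ in \eqref{infI}; summing over $k=1,\ldots,l$ gives
\[
\sum_{k=1}^{l}\gamma_{k}\int |\nabla \psi_{k}|^{2}\;dx = \sum_{k=1}^{l}\int f_{k}(\psib)\psi_{k}\;dx.
\]
Since $\psib$ is real-valued, by \eqref{fkreal} we have $f_k(\psib)=\partial F/\partial x_k(\psib)$, and because $F$ is homogeneous of degree $3$ on $\R^l$ (assumption \ref{H5}, together with \ref{H7} so that $F$ is real on $\R^l$), Euler's identity yields $\sum_{k=1}^{l}\partial_{x_k}F(\psib)\,\psi_k = 3F(\psib)$. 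Hence the right-hand side equals $3\int F(\psib)\,dx = 3P(\psib)$, so
\[
K(\psib)=\sum_{k=1}^{l}\gamma_{k}\|\nabla\psi_k\|_{L^2}^2 = 3\,P(\psib).
\]

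It then remains to observe that $K(\psib)>0$. If $K(\psib)=0$ then $\nabla\psi_k=0$ for every $k$, so each $\psi_k$ is constant; being in $\dot{H}^1(\R^6)$ (equivalently in $L^{6}(\R^6)$ via the Sobolev embedding, since $2^*=3$ in dimension six, wait — $2^\ast = 2n/(n-2)=3$ here) forces $\psi_k\equiv 0$, contradicting nontriviality. Therefore $K(\psib)>0$ and consequently $P(\psib)=\tfrac{1}{3}K(\psib)>0$, i.e. $\psib\in\D$. This proves $\mathcal{C}\subset\D$.

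The only mildly delicate point is justifying that $\psib$ itself is an admissible test function in \eqref{infI} and that all the integrals above are finite; this follows from $\psib\in\dot{\mathbf{H}}^1(\R^6)$, the Sobolev embedding $\dot H^1(\R^6)\hookrightarrow L^{3}(\R^6)$ in \eqref{CSI}, and the growth bound $|\mathrm{Re}\,F(\mathbf{z})|\le C\sum_j|z_j|^3$ from Lemma \ref{estdifF}(i), which guarantees $F(\psib)\in L^1(\R^6)$ and that $f_k(\psib)\psi_k\in L^1(\R^6)$ (each $f_k$ grows quadratically by \eqref{fkhomog2}, so $f_k(\psib)\psi_k$ is controlled by $\sum_j|\psi_j|^3$). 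I expect this integrability bookkeeping to be the main (and only real) obstacle; the algebraic core — deriving $K(\psib)=3P(\psib)$ via Euler's relation — is immediate once the test function is legitimate.
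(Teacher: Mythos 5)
Your proof is correct and follows essentially the same route as the paper: test \eqref{infI} with $\mathbf{g}=\psib$ and use the Euler-type identity $\sum_k f_k(\psib)\psi_k = 3F(\psib)$ to obtain $K(\psib)=3P(\psib)$, then conclude $P(\psib)>0$ from nontriviality. The paper invokes Lemma~\ref{estdifF}(ii) directly for the Euler identity rather than re-deriving it from (iii) and homogeneity, but this is the same underlying fact; your extra remarks on integrability and on why $K(\psib)>0$ are just details the paper leaves implicit.
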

\begin{proof}
Let $\psib\in \mathcal{C}$. By taking $\mathbf{g}=\psib$ in \eqref{infI} and using  Lemma \ref{estdifF}-(ii),
\begin{equation}\label{KPrel}
3P(\psib)=K(\psib),
\end{equation}
from which we deduce the desired.
\end{proof}

 It is convenient to introduce the following functionals:
\begin{equation}\label{Jfunc}
    J(\psib):=\frac{K(\psib)^{\frac{3}{2}}}{P(\psib)},\qquad \psib\in \D,
\end{equation}
and
\begin{equation}\label{Efunc}
    \E(\psib):=K(\psib)-2P(\psib)\qquad \psib\in \Dz.
\end{equation}

\begin{obs}\label{relatEIJ}
Let  $\psib$ be  a non-trivial solution of  \eqref{syselip6}. Then, clearly
\begin{equation*}
    \E(\psib)=2I(\psib)
\end{equation*}
and using \eqref{KPrel},
\begin{equation*}
     J(\psib)=\frac{6^{\frac{3}{2}}}{2}I(\psib)^{\frac{1}{2}}.
\end{equation*}
In particular,  a non-trivial solution of \eqref{syselip6}  is a ground state  if and only if its has least energy among all non-trivial solutions of \eqref{syselip6} if only if it minimizes $J$.
\end{obs}

With this in mind, one of the main results of this paper reads as follows.

\begin{teore} \label{thm:GESn6}
There exists a ground state solution $\psib_{0}$ for  system \eqref{syselip6}, that is, $\mathcal{G}_{6}$ is not empty. 
\end{teore}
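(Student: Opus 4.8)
The plan is to obtain the ground state as a minimizer of the scale-invariant quotient $J(\psib) = K(\psib)^{3/2}/P(\psib)$ over the set $\D$, and then recover the Euler--Lagrange equation \eqref{syselip6} after a suitable rescaling. By Remark \ref{relatEIJ}, minimizing $J$ over $\D$ is equivalent (on the set of solutions) to finding a least-energy solution, so it suffices to produce one minimizer of $J$ which, after scaling, solves \eqref{syselip6}. Set $d := \inf_{\psib \in \D} J(\psib)$; first I would check $d > 0$, which follows from the critical Sobolev inequality \eqref{CSI} applied componentwise together with the growth bound $|\mathrm{Re}\,F(\zb)| \leq C\sum_j |z_j|^3$ from Lemma \ref{estdifF}(i), giving $P(\psib) \leq C \sum_k \|\psi_k\|_{L^3}^3 \leq C' K(\psib)^{3/2}$.

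Next I would take a minimizing sequence $(\psib_m) \subset \D$ for $J$. Using the invariance of $J$ under the scaling $\psib \mapsto \lambda \psib(\mu\,\cdot)$ (the dilation rescales $K$ and $P$ by the same power because $F$ is homogeneous of degree $3$ and $\dot H^1(\R^6)$ is critical), I normalize so that, say, $K(\psib_m) = 1$ for all $m$; then $P(\psib_m) \to d^{-1}$. The heart of the argument is to apply the concentration-compactness method to the sequence of measures $\mu_m = \sum_k \gamma_k |\nabla \psi_{k,m}|^2\,dx$, which have bounded total mass. One rules out \emph{vanishing} using that $P(\psib_m)$ stays bounded away from zero (a vanishing lemma of Lions type forces $\|\psi_{k,m}\|_{L^3} \to 0$, hence $P \to 0$, a contradiction), and one rules out \emph{dichotomy} by the strict subadditivity of the relevant infimum, which here is a consequence of the homogeneity/scaling: splitting mass $t \in (0,1)$ and $1-t$ would give $d^{-1} \leq$ a combination that strictly exceeds what a single piece achieves unless one piece carries all the mass. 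After translating the $\psib_m$ appropriately, \emph{compactness} yields $\psib_m \rightharpoonup \psib_0$ weakly in $\dot{\mathbf H}^1(\R^6)$ with no loss of the $L^3$-type mass; by Brezis--Lieb (Lemma \ref{BLLG}, with $\varphi(\vb) = \sum_j |v_j|^3$) together with weak lower semicontinuity of $K$, one gets $K(\psib_0) \leq \liminf K(\psib_m)$ and $P(\psib_0) = \lim P(\psib_m)$, so $J(\psib_0) \leq d$, hence $J(\psib_0) = d$ and $\psib_0 \neq \mathbf 0$.

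Finally I would derive the equation. Since $\psib_0$ minimizes $J$ over the open set $\D$, it is a critical point of $J$, and computing $J'(\psib_0) = 0$ gives, for every $\mathbf g \in \dot{\mathbf H}^1(\R^6)$,
\begin{equation*}
\frac{3}{2}\frac{K(\psib_0)^{1/2}}{P(\psib_0)}\sum_{k=1}^l \gamma_k \int \nabla\psi_{0,k}\cdot\nabla g_k\,dx = \frac{K(\psib_0)^{3/2}}{P(\psib_0)^2}\sum_{k=1}^l \int f_k(\psib_0) g_k\,dx,
\end{equation*}
using $P'(\psib_0)\mathbf g = \sum_k \int f_k(\psib_0)g_k\,dx$ from \eqref{fkreal}. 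Thus $\psib_0$ solves $-\gamma_k \Delta \psi_{0,k} = c\, f_k(\psib_0)$ for the constant $c = 2P(\psib_0)/(3K(\psib_0)^{1/2}) > 0$, and by the degree-$2$ homogeneity \eqref{fkhomog2} the rescaled function $\widetilde{\psib}_0 := c\,\psib_0$ solves \eqref{syselip6} exactly; since $J$ is scale-invariant, $\widetilde{\psib}_0$ still realizes $d$, so it is a ground state and $\mathcal{G}_6 \neq \emptyset$. The main obstacle I anticipate is the dichotomy step: establishing the strict subadditivity inequality that prevents the minimizing sequence from splitting into separating bumps, which must be extracted carefully from the homogeneity of $F$ and the criticality of the exponent (and possibly requires the positivity-type structure in \ref{H6}--\ref{H7} to control $P$ of a sum by the sum of the $P$'s); a secondary technical point is handling translations and the possible concentration at a point, where one passes to the scaling-limit and invokes that $\dot H^1(\R^6) = D^{1,2}(\R^6)$ to keep everything in the right function space.
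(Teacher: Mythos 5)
Your overall strategy (minimize the scale-invariant quotient $J$ over $\D$ via concentration-compactness, then recover \eqref{syselip6} from the Euler--Lagrange equation using the degree-2 homogeneity \eqref{fkhomog2} to absorb the multiplier) is the same as the paper's, and your final Lagrange-multiplier/rescaling step is correct. But there are two genuine gaps, both coming from treating the problem as if it were subcritical. First, your vanishing step fails: in $\R^6$ the exponent $3=2^*$ is the \emph{critical} Sobolev exponent, and Lions' vanishing lemma only gives $\|\psi_{k,m}\|_{L^q}\to 0$ for $2<q<2^*$, never at $q=2^*$; moreover your sequence is only bounded in $\dot{\mathbf H}^1$, so there is no $L^2$ bound to feed into that lemma in the first place. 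A concentrating bubble $R^{-2}\phi(R^{-1}x)$ with $R\to0$ has vanishing local mass at every fixed scale yet constant $K$ and $P$, so normalizing $K(\psib_m)=1$ does nothing to prevent this. The paper's remedy is to renormalize the minimizing sequence by dilations and translations so that the concentration function of $F$ satisfies $\sup_y\int_{B(y,1)}F(\vb_m)\,dx=\int_{B(0,1)}F(\vb_m)\,dx=\tfrac12$ (Claim 1 of Theorem \ref{teocomp}); vanishing is then excluded by construction, not by a vanishing lemma.

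Second, your compactness step asserts ``no loss of the $L^3$-type mass'' and concludes $P(\psib_0)=\lim P(\psib_m)$ from Brezis--Lieb, but Brezis--Lieb only gives $P(\psib_m)=P(\psib_0)+P(\vb_m)+o(1)$ with $\vb_m=\psib_m-\psib_0$, and in the critical case $P(\vb_m)$ need not tend to zero: the $F$-mass can concentrate at a single point even after vanishing and dichotomy of the measures are ruled out. This is the central difficulty of the limiting case, not a ``secondary technical point.'' The paper handles it with the second concentration-compactness lemma (Lemma \ref{thm:CCLII}), which decomposes the limit measures as $\nu=F(\vb)\,dx+\sum_j\nu_j\delta_{x_j}$ and $\mu\geq\sum_k\gamma_k|\nabla v_k|^2dx+\sum_j\mu_j\delta_{x_j}$ with $\nu_j\leq S^{-3/2}\mu_j^{3/2}$, then uses strict concavity of $\lambda\mapsto\lambda^{2/3}$ to show that at most one of $P(\vb)$, $\nu_j$ is nonzero, and finally uses the $\tfrac12$-normalization above to exclude a single atom carrying all the mass. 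Your strict-subadditivity idea for dichotomy is in the right spirit (the paper implements it through a localized Sobolev inequality, Corollary \ref{locGSI}), but without the dilation normalization and the atomic decomposition the argument does not close.
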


In order to prove Theorem \ref{thm:GESn6} we shall use the concentration-compactness method to obtain a solution to a constrained minimization problem deduced from a general critical Sobolev-type inequality, which turns out to be a ground state.

 \subsection{General critical Sobolev-type inequality}

  From now on we assume that all components of the vector  $\ub$ are  real-valued functions. 
 Hence, using Lemma \ref{estdifF} we obtain the following general critical Sobolev-type inequality:
  \begin{equation}\label{GCSI6}
     P(\ub)\leq CK(\mathbf{u})^{\frac{3}{2}},\qquad \forall  \mathbf{u}\in \D.
 \end{equation}
 In particular, this shows that functional $J$ is bounded from below by a positive constant. Then, the infimum of $J$ on $\D$ is positive and the best constant we can place in \eqref{GCSI6} is given by
\begin{equation}\label{C6}
    C_{6}^{-1}:=\inf\left\{J(\ub);\;\mathbf{u}\in \D \right\}.
\end{equation}
The subscript in the definition of $C_6$ is motivated by the dimension $n=6$.

We will  prove that the infimum \eqref{C6} is attained. To this end, we consider the following normalized version
\begin{equation}\label{S}
    S:=\inf\left\{K(\mathbf{u});\;\mathbf{u}\in \D,\; P(\ub)=1\right\}.
\end{equation}
Minimization problems as \eqref{S} was studied by Lions in \cite[page 166, equation (30)]{Lions3}. However, since we are not assuming that $F$ is strictly positive outside the origin his approach need to be slightly modified. This is why our minimization problem \eqref{S} is posed on $\D$ and not on $\Dz$.

A \textit{minimizing sequence} for \eqref{C6} is a sequence $(\ub_{m})$ in $\D$ such that $J(\ub_m)\to C_6^{-1}$. In the same way, a minimizing sequence for \eqref{S} is a sequence $(\ub_{m})$ in $\D$ such that $P(\ub_{m})=1$ and $K(\ub_m)\to S$. Since  $K(\!\big\bracevert\!\! \mathbf{u}\!\!\big\bracevert\!)\leq K(\ub)$, assumption \textnormal{\ref{H6}} implies that $J(\!\big\bracevert\!\! \mathbf{u}\!\!\big\bracevert\!)\leq J(\ub)$. Thus, if $(\ub_{m})$ is a minimizing sequence of \eqref{C6} (or \eqref{S}) so is  $(\!\big\bracevert\!\! \mathbf{u}_m\!\!\big\bracevert\!)$. In particular, without loss of generality, we can  (and \textit{will}) assume that minimizing sequences are always non-negative.

\begin{obs}\label{KPinvarice6}
Since the functionals $K$ and $P$ are homogeneous of degree 2 and 3, respectively, we have
\begin{enumerate}
    \item[(i)] $C_{6}=S^{-\frac{3}{2}}$, which means that \eqref{GCSI6} becomes
     \begin{equation}\label{GCSIS}
     P(\ub)\leq S^{-\frac{3}{2}}K(\mathbf{u})^{\frac{3}{2}},\qquad \forall  \mathbf{u}\in \D. 
 \end{equation}
    Moreover,   if  $\mathbf{v}$  is a minimizer for \eqref{S} it also is a minimizer for \eqref{C6}. In fact,
 \begin{equation*}
     J(\vb)=\frac{K(\vb)^{\frac{3}{2}}}{P(\vb)}=K(\vb)^{\frac{3}{2}}=S^{\frac{3}{2}}=C_{6}^{-1}. 
 \end{equation*}
    \item[(ii)] The functionals $K$ and $P$ are invariant under the transformation
    \begin{equation}\label{scaling6}
\ub\mapsto \vb^{R,y}(x)=
 R^{-2}\ub\left(R^{-1}(x-y)\right), 
 \end{equation}
 where $R>0$ and $y\in \R^{6}$. In particular, if $(\ub_{m})$ is a minimizing sequence for \eqref{C6} (or \eqref{S}), so is the sequence  $(\mathbf{v}_{m})$ with $\vb_m(x)=
 R^{-2}\ub_m\left(R^{-1}(x-y)\right)$. 
\end{enumerate}
\end{obs}

 \subsection{Concentration Compactness principle}

To obtain that \eqref{S} has a minimizer we will use the concentration-compactness method. The first result in this direction is based on  \cite[Lemma I.1]{lions1984concentration}. 

 \begin{lem}[Concentration-Compactness I]\label{CCLI}
  Suppose  that $(\nu_{m})$ is a sequence in $\mathcal{M}_{+}^{1}(\R^{n})$. Then, there is a subsequence, still denoted by $(\nu_{m})$, such that one of the following three conditions hold:
  \begin{enumerate}
  
  \item[(i)] (Vanishing) For all $R>0$ there holds 
      \begin{equation*}
          \lim_{m\to \infty}\left(\sup_{x\in \R^{n}}\nu_{m}(B(x,R))\right)=0.
      \end{equation*}
      
      \item[(ii)] (Dichotomy) There exists a number $\lambda$, $0<\lambda<1$, such that for any $\epsilon>0$ there exist a number $R>0$ and a sequence $(x_{m})$ with the following property: given $R'>R$
      \begin{equation*}
          \nu_{m}(B(x_{m},R))\geq \lambda-\epsilon,
      \end{equation*}
      \begin{equation*}
         \nu_{m}(\R^{n}\setminus B(x_{m},R'))\geq 1-\lambda-\epsilon,
      \end{equation*}
      for $m$ sufficiently large. 
       \item[(iii)] (Compactness) There exists a sequence $(x_{m})\subset \R^{n}$ such that for any $\epsilon>0$ there is a radius $R>0$ with the property that 
      \begin{equation*}
     \nu_{m}(B(x_{m},R))\geq 1-\epsilon,
      \end{equation*}
      for all $m$.
     
  \end{enumerate}
 \end{lem}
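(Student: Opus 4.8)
The plan is to run Lions' original concentration-compactness argument through the Lévy concentration functions. For each $m$ define the non-decreasing function $Q_{m}\colon[0,\infty)\to[0,1]$ by
\[
Q_{m}(R):=\sup_{x\in\R^{n}}\nu_{m}(B(x,R)),
\]
which is bounded by $1$ since $\nu_{m}\in\mathcal{M}_{+}^{1}(\R^{n})$ and non-decreasing because the balls are nested. As $(Q_{m})$ is a uniformly bounded sequence of monotone functions, Helly's selection theorem yields a subsequence (not relabeled) and a non-decreasing $Q\colon[0,\infty)\to[0,1]$ with $Q_{m}(R)\to Q(R)$ at every continuity point of $Q$, i.e. for all $R$ outside an at most countable set. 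Put $\lambda:=\lim_{R\to\infty}Q(R)=\sup_{R>0}Q(R)\in[0,1]$. The trichotomy will be read off from the three cases $\lambda=0$, $0<\lambda<1$, and $\lambda=1$.

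If $\lambda=0$ then $Q\equiv0$ is continuous, so $Q_{m}(R)\to0$ for every $R>0$, which is exactly vanishing (i). Suppose next $0<\lambda<1$ and fix $\epsilon\in(0,\min\{\lambda,1-\lambda\})$. Choose a continuity point $R$ of $Q$ with $Q(R)>\lambda-\epsilon$ (possible since $Q(R)\uparrow\lambda$ and continuity points are co-countable); then $Q_{m}(R)>\lambda-\epsilon$ for $m$ large, and by definition of the supremum there is $x_{m}$ with $\nu_{m}(B(x_{m},R))>\lambda-\epsilon$, which is the first estimate in (ii). For the second, fix $R'>R$ and pick a continuity point $R''\ge R'$; from $\nu_{m}(B(x_{m},R'))\le\nu_{m}(B(x_{m},R''))\le Q_{m}(R'')\to Q(R'')\le\lambda$ we get $\nu_{m}(B(x_{m},R'))<\lambda+\epsilon$ for $m$ large, hence $\nu_{m}(\R^{n}\setminus B(x_{m},R'))>1-\lambda-\epsilon$. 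Values $\epsilon\ge\min\{\lambda,1-\lambda\}$ are covered by the data produced for a smaller $\epsilon$.

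The remaining case $\lambda=1$ gives compactness and is the delicate part. First I fix the sequence of centers once and for all: choosing a continuity point $R_{\ast}$ with $Q(R_{\ast})>2/3$, pick $x_{m}$ with $\nu_{m}(B(x_{m},R_{\ast}))>2/3$ for $m\ge M_{\ast}$ and set $x_{m}:=0$ for $m<M_{\ast}$. Given $\epsilon\in(0,1/3)$, choose a continuity point $R_{\epsilon}>R_{\ast}$ with $Q(R_{\epsilon})>1-\epsilon$ and, for $m$ large, a point $y_{m}$ with $\nu_{m}(B(y_{m},R_{\epsilon}))>1-\epsilon>2/3$. Since $B(x_{m},R_{\ast})$ and $B(y_{m},R_{\epsilon})$ each carry $\nu_{m}$-mass exceeding $2/3$, they must intersect, so $|x_{m}-y_{m}|<R_{\ast}+R_{\epsilon}$ and therefore $B(y_{m},R_{\epsilon})\subset B(x_{m},3R_{\epsilon})$; consequently $\nu_{m}(B(x_{m},3R_{\epsilon}))>1-\epsilon$ for $m$ large. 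Finally, since each $\nu_{m}$ is a probability measure we may enlarge the radius to some $\widetilde{R}_{\epsilon}\ge3R_{\epsilon}$ so that the finitely many indices $m$ below the threshold also satisfy $\nu_{m}(B(x_{m},\widetilde{R}_{\epsilon}))>1-\epsilon$; then $\nu_{m}(B(x_{m},\widetilde{R}_{\epsilon}))>1-\epsilon$ for all $m$, which is (iii), with $\epsilon\ge1/3$ reduced to $\epsilon=1/4$.

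I expect the main obstacle to be exactly this last case: producing a \emph{single} sequence $(x_{m})$ that works simultaneously for all $\epsilon$ and all $m$. The observation that two balls each of $\nu_{m}$-mass larger than $1/2$ must overlap is what ties the different radii $R_{\epsilon}$ to a common set of centers, and the reduction to finitely many indices disposes of the small-$m$ issue; once the concentration functions and $\lambda$ are in place, the vanishing and dichotomy alternatives are routine.
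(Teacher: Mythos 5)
Your argument is correct and is essentially the standard Lions/Struwe proof via Lévy concentration functions and Helly's selection theorem, which is exactly the argument in the references (Struwe, Ch.~I, Lemma 4.3; Flucher--M\"uller, Lemma 23) that the paper cites in lieu of a proof. The overlapping-balls trick you use to pin down a single sequence of centers in the compactness case, and the enlargement of the radius to absorb finitely many small indices, are precisely how the cited proofs handle that step.
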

 
 \begin{proof}
 See for instance \cite[Chapter I, Lemma 4.3]{Struwe} and \cite[Lemma 23]{Flucher}.
 \end{proof}
 
  The next lemma is inspired by the concentration-compactness principle in the limiting case (see \cite{Lions3}).  For its proof we follow closely the ideas presented in \cite[Theorem 1.4.2]{Evans1992measure} (see also \cite[Lemma 4.8]{Struwe}).

 \begin{lem}[Concentration-compactness II]\label{thm:CCLII} 
 Let $(\mathbf{u}_{m})\subset \Dz $ be any sequence such that $\ub_{m}\geq \mathbf{0}$ and
  \begin{equation}\label{weakconv}
  \begin{cases}
  \mathbf{u}_{m} \rightharpoonup \mathbf{u}, \qquad \qquad \qquad\qquad\quad\quad \mathrm{ in }\quad\Dz,\\
 {\displaystyle \mu_{m}:=\sum_{k=1}^{l}\gamma_{k}|\nabla u_{km}|^{2}\;dx} \overset{\ast}{\rightharpoonup} \mu, \quad \mathrm{ in}\quad \mathcal{M}^{b}_{+}(\R^{6}),\\ 
  \nu_{m}:=F(\mathbf{u}_{m})\;dx \overset{\ast}{\rightharpoonup} \nu, \qquad \qquad \quad \mathrm{ in}\quad \mathcal{M}^{b}_{+}(\R^{6}).
  \end{cases}
  \end{equation}
  Then,
  \begin{enumerate}
      \item[(i)] There exist an at most countable set $J$, a family of distinct points $\{x_{j}\in \R^{6}:j\in J\}$,  and a family  of non-negative numbers $\{\nu_{j}:j\in J\}$ such that
      \begin{equation}\label{nudesc}
          \nu=F(\mathbf{u})\; dx+\sum_{j\in J}\nu_{j}\delta_{x_{j}}.
      \end{equation}
      \item[(ii)] In addition, we have
      \begin{equation}\label{mudesc}
          \mu\geq \sum_{k=1}^{l}\gamma_{k}|\nabla u_{k}|^{2}\;dx+\sum_{j\in J}\mu_{j}\delta_{x_{j}},
      \end{equation}
      for some family $\{\mu_{j}:j\in J\}$, $\mu_{j}>0$, such that
      \begin{equation}\label{Sobvj}
          \nu_{j}\leq S^{-\frac{3}{2}}\mu_{j}^{\frac{3}{2}}, \qquad \forall j \in J.
      \end{equation}
      In particular, $\sum_{j\in J}\nu_{j}^{\frac{2}{3}}<\infty$. 
  \end{enumerate}
 \end{lem}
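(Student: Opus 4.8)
The plan is to follow the classical concentration-compactness argument of P.-L. Lions in the limiting (critical Sobolev) case, adapted to the vectorial functional $P(\ub)=\int F(\ub)\,dx$. Since all components are non-negative and $F$ is continuous, homogeneous of degree $3$, and controlled by $\sum_j |u_j|^3$ (Lemma \ref{estdifF}), the measures $\mu_m$ and $\nu_m$ are finite and bounded (because $(\ub_m)$ is bounded in $\Dz$ and $P$ is continuous on $\Dz$), so after passing to a subsequence we may indeed assume the three convergences in \eqref{weakconv} hold by Lemma \ref{vagbndconv}. The strategy splits into the ``limiting'' Sobolev inequality for test functions and then a localization argument.

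\smallskip
\textbf{Step 1: a reverse-H\"older / Sobolev inequality for the weak-* limit.} First I would prove that for every $\phi\in\mathcal{C}_c^\infty(\R^6)$ one has
\begin{equation*}
\left(\int |\phi|^3\,d\nu\right)^{1/3}\leq S^{-1/2}\left(\int |\phi|^2\,d\mu\right)^{1/2}.
\end{equation*}
This follows by applying the general critical Sobolev-type inequality \eqref{GCSIS} to the sequence $\phi\,\ub_m$: since $F$ is homogeneous of degree $3$ in the (real, non-negative) components,
\[
\int F(\phi\ub_m)\,dx=\int |\phi|^3 F(\ub_m)\,dx\ \to\ \int|\phi|^3\,d\nu,
\]
while $K(\phi\ub_m)=\sum_k\gamma_k\int|\nabla(\phi u_{km})|^2\,dx=\sum_k\gamma_k\int |\phi|^2|\nabla u_{km}|^2\,dx+o(1)=\int|\phi|^2\,d\mu+o(1)$, the cross terms and the $|\nabla\phi|^2|u_{km}|^2$ term vanishing in the limit because $(\ub_m)$ is bounded in $L^3_{loc}$ and converges strongly in $L^2_{loc}$ to $\ub$ — here I must be slightly careful: one first checks $\phi\ub_m\in\D$ (i.e. $P(\phi\ub_m)>0$) for those $\phi$ with $\int|\phi|^3\,d\nu>0$, and handles the trivial case $\int|\phi|^3\,d\nu=0$ separately.

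\smallskip
\textbf{Step 2: from the inequality to the atomic decomposition.} With the inequality of Step 1 in hand, write $\nu=\nu_{ac}+\nu_{s}$ and observe it says $\nu\ll\mu$ in a strong (H\"older) sense; applying the reverse-H\"older inequality of Lions (the abstract lemma in \cite[Theorem 1.4.2]{Evans1992measure}, proved via the Besicovitch differentiation theorem) yields that $\nu$ is supported, off its absolutely continuous part, on an at most countable set $\{x_j\}$, i.e.
\[
\nu=g\,dx+\sum_{j\in J}\nu_j\delta_{x_j},\qquad \nu_j=\nu(\{x_j\})>0,
\]
and moreover $\nu_j\leq S^{-3/2}\mu(\{x_j\})^{3/2}$ for each $j$. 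To identify the density $g$ with $F(\ub)$, use weak lower semicontinuity together with the Brezis--Lieb-type Lemma \ref{BLLG}: setting $\vb_m=\ub_m-\ub$, one has $F(\ub_m)-F(\vb_m)-F(\ub)\to 0$ in $L^1_{loc}$, hence $\nu\geq F(\ub)\,dx$ and the singular (atomic) part is captured by the defect measure of $F(\vb_m)\,dx$; comparing with the decomposition shows $g=F(\ub)$ a.e., which is \eqref{nudesc}. This step is where I expect the main technical obstacle: making the density identification rigorous requires combining the local strong $L^2$ convergence, the uniform $L^3$ bound (assumption (iii)-type bounds via $\varphi$), and Lemma \ref{BLLG} carefully, and checking there is no loss of mass ``at infinity'' in the local statement — which is fine because \eqref{nudesc} is a purely local identity.

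\smallskip
\textbf{Step 3: the lower bound for $\mu$.} For (ii), fix $j$ and test with cut-offs $\phi_{j,\varepsilon}$ supported near $x_j$ and equal to $1$ at $x_j$: lower semicontinuity of the Dirichlet form along the weak $\Dz$-convergence gives $\mu\geq \sum_k\gamma_k|\nabla u_k|^2\,dx$ as measures on any open set where $\ub_m\to\ub$ strongly in $L^2$; combined with the fact that the atoms $x_j$ of $\nu$ must also carry mass of $\mu$ (from the inequality $\nu_j\leq S^{-3/2}\mu(\{x_j\})^{3/2}$, which forces $\mu(\{x_j\})>0$), one sets $\mu_j:=\mu(\{x_j\})$ and obtains \eqref{mudesc} and \eqref{Sobvj}. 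Finally, since $\mu$ is a finite measure, $\sum_j\mu_j\leq\|\mu\|<\infty$, and \eqref{Sobvj} gives $\sum_j\nu_j^{2/3}\leq S^{-1}\sum_j\mu_j<\infty$, which is the last assertion. I would organize the write-up so that Steps 1--2 are the substance and Step 3 is a short consequence, and I would import the abstract measure-theoretic inequality from \cite{Evans1992measure} rather than reprove it, as the excerpt already signals.
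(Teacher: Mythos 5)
There is a genuine gap in your Step 1, and it propagates into Step 2. You claim that for every $\phi\in\mathcal{C}_c^\infty(\R^6)$ one has $K(\phi\ub_m)=\int|\phi|^2\,d\mu+o(1)$, because ``the cross terms and the $|\nabla\phi|^2|u_{km}|^2$ term vanish in the limit'' by strong $L^2_{loc}$ convergence. But strong convergence of $u_{km}$ to $u_k$ in $L^2_{loc}$ only kills the term $\int|\nabla\phi|^2|u_{km}|^2\,dx$ when $u_k=0$ on $\operatorname{supp}\nabla\phi$; in general that term converges to $\int|\nabla\phi|^2|u_k|^2\,dx\neq0$, and likewise for the cross terms. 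Consequently the reverse H\"older inequality
$\int|\phi|^3\,d\nu\leq S^{-3/2}\left(\int|\phi|^2\,d\mu\right)^{3/2}$
between the \emph{full} limit measures is simply false when $\ub\neq\mathbf{0}$: take $\ub_m\equiv\ub$ constant in $m$, so that $\nu=F(\ub)\,dx$ and $\mu=\sum_k\gamma_k|\nabla u_k|^2\,dx$; localizing on a small ball where $\ub$ is essentially a nonzero constant makes the left side positive and the right side arbitrarily small. Since your Step 2 extracts the atomic structure of $\nu$ precisely from this inequality (via the differentiation/Radon--Nikodym argument, which needs the bound $h(x)\leq S^{-3/2}\mu(\{x\})^{1/2}$ to force the density to vanish off the atoms of $\mu$), the decomposition \eqref{nudesc} and the bound \eqref{Sobvj} are not established by your argument in the case $\ub\neq\mathbf{0}$.

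The repair is the standard one, and it is what the paper does: first prove the lemma under the additional hypothesis $\ub=\mathbf{0}$, where your Step 1 computation is correct and the reverse H\"older inequality, upgraded to $\nu(E)\leq S^{-3/2}\mu(E)^{3/2}$ for Borel sets, yields the purely atomic structure of $\nu$ exactly as you describe. Then, for general $\ub$, set $\vb_m=\ub_m-\ub$ and show that the defect measures satisfy $\mu=\tilde\mu+\sum_k\gamma_k|\nabla u_k|^2\,dx$ (using $\nabla v_{km}\rightharpoonup 0$ in $L^2$ to kill the cross terms) and $\nu=\tilde\nu+F(\ub)\,dx$ (using the Brezis--Lieb Lemma \ref{BLLG}, which you correctly invoke but too late in the logic); finally apply the $\ub=\mathbf{0}$ case to $(\vb_m)$ and its defect measures $\tilde\mu,\tilde\nu$. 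Your Step 3 and the summability of $\sum_j\nu_j^{2/3}$ are fine once this reduction is in place.
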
 
  
  \begin{obs}\label{umnoneg}
   Since $\ub_{m}\geq \mathbf{0}$, Lemma \ref{estdifF} (iii) implies that $F(\ub_{m})\geq 0$. Hence, $\nu_{m}$ is indeed a positive measure. Moreover, the weak convergence $\ub_m\rightharpoonup\ub$ implies that, up to a subsequence, $\ub_{m}\to \ub$ a.e. in $\R^{6}$ (see for instance   \cite[Corollary 8.7]{Lieb}). As a consequence,  $\ub\geq \mathbf{0}$.  
  \end{obs}
  \begin{proof}[Proof of Lemma \ref{thm:CCLII}] We divide the proof into the cases $\ub=0$ and $\ub\neq0$. 
  \vskip.2cm

\noindent {\bf Step 1.}  Assume first that $\mathbf{u}=\mathbf{0}$. 

Let $\xi \in \mathcal{C}_{c}^{\infty}(\R^{6})$. From the vague convergence of $(\nu_{m})$ in \eqref{weakconv} and assumption \ref{H5} we have
  \begin{equation}\label{lim1}
    \begin{split}
        \int|\xi|^{3}\;d\nu
        =\lim_{m\to \infty}\int|\xi|^{3}F(\mathbf{u}_{m})\;dx=\lim_{m\to \infty}\int F(|\xi|\mathbf{u}_{m})\;dx\leq S^{-\frac{3}{2}}\liminf_{m\to \infty}K(\xi\mathbf{u}_{m})^{\frac{3}{2}},
    \end{split}
  \end{equation}
  where we have used the critical Sobolev-type inequality \eqref{GCSIS} in the last inequality. Since $\mathbf{u}_{m}\rightharpoonup  \mathbf{0}$ in $\Dz$ we know that (see \cite[Theorem 8.6]{Lieb}),  for any $A\subset \R^{6}$ with finite measure and $k=1,\ldots,l$ we have 
  \begin{equation}\label{strogconv}
      \chi_{A}u_{km}\to 0, \quad \mathrm{ strongly\;\;\; in}\quad L^{2}(\R^{6}).
  \end{equation}
 Thus, using the triangular inequality and taking $A$ as $\mathrm{supp}( |\nabla \xi|)$ in \eqref{strogconv}   we get
\begin{equation*}
    \begin{split}
         \left|\left(\sum_{k=1}^{l}\gamma_{k}\|\nabla[\xi u_{km}]\|^{2}_{L^{2}}\; \right)^{\frac{1}{2}}-\left(\sum_{k=1}^{l}\gamma_{k}\|\xi\nabla u_{km}\|^{2}_{L^{2}}\right)^{\frac{1}{2}}\right|
        &\leq\left(\sum_{k=1}^{l}\gamma_{k}\|\nabla[\xi u_{km}]-\xi\nabla u_{km}\|^{2}_{L^{2}}\right)^{\frac{1}{2}} \\
        &=\left(\sum_{k=1}^{l}\gamma_{k}\|u_{km}\nabla\xi \|^{2}_{L^{2}}\right)^{\frac{1}{2}}\\
        &\leq C\left(\sum_{k=1}^{l}\int|\chi_{A} u_{km}|^{2}\;dx\right)^{\frac{1}{2}}\\
        &\to 0,\qquad \mathrm{as} \quad m\to \infty.
    \end{split}
\end{equation*}
Combining this with  the vague convergence of $(\mu_{m})$  we obtain
    \begin{equation*}
    \begin{split}
       \liminf_{m\to \infty}K(\xi\ub_{m})^{\frac{3}{2}}
        &=\liminf_{m\to \infty}\left(\int|\xi|^{2}\sum_{k=1}^{l}\gamma_{k}|\nabla u_{km}|^{2}\; dx\right)^{\frac{3}{2}}\\
        &=\liminf_{m\to \infty}\left(\int|\xi|^{2}\;d\mu_{m}\right)^{\frac{3}{2}}\\
        &=\left(\int|\xi|^{2}\;d\mu\right)^{\frac{3}{2}}.
    \end{split}
    \end{equation*}  
 Therefore, from \eqref{lim1}  we deduce that
 \begin{equation}\label{ineqxi}
     \int|\xi|^{3}\;d\nu\leq S^{-\frac{3}{2}}\left(\int|\xi|^{2}\;d\mu\right)^{\frac{3}{2}},\qquad  \xi \in \mathcal{C}^{\infty}_{c}(\R^{6}). 
 \end{equation}
 We claim that inequality \eqref{ineqxi} actually implies that
 \begin{equation}\label{ineqbor}
     \nu(E)\leq  S^{-\frac{3}{2}}\mu(E)^{\frac{3}{2}}, \qquad \mbox{for any}\; E\in \mathcal{B}(\R^{6}).
\end{equation}
In fact,  since $\nu$ and $\mu$ are Radon measures, they are inner regular on open sets and outer regular on Borel sets, respectively. Let $U\subset \R^{6}$ be an open set and take  any compact set $A$, with $A\subset U $. By  $\mathcal{C}^{\infty}$ Urysohn's  lemma (see for instance \cite[Lemma 8.18]{Folland}) there exists $f\in  \mathcal{C}^{\infty}_{c}(\R^{6})$ such that $0\leq f\leq 1$, $f=1$ on $A$ and $\mathrm{supp}(f)\subset U$. Then, from \eqref{ineqxi},
   \begin{equation*}
          \nu(A)=\!\int_{A} f^{3}\; d\nu\leq \int f^{3}\; d\nu\leq S^{-\frac{3}{2}}\left(\int f^{2}\;d\mu\right)^{\frac{3}{2}}\!\leq \! S^{-\frac{3}{2}}\left(\int_{\mathrm{supp}(f)}f^{2}\;d\mu\right)^{\frac{3}{2}}\!\leq\! S^{-\frac{3}{2}}\left(\int_{{U}}\;d\mu\right)^{\frac{3}{2}}.
  \end{equation*}
  Hence, $\nu(A)\leq  S^{-\frac{3}{2}}\mu(U)^{\frac{3}{2}}$ for all $A\subset U$, $A$ compact. Thus, from the inner regularity of the measure $\nu$ we conclude that
\begin{equation}\label{ineqopen}
     \nu(U)\leq  S^{-\frac{3}{2}}\mu(U)^{\frac{3}{2}}, \qquad \mbox{for any}\; U\subset \R^{6}, \quad U\quad \mathrm{open}.
\end{equation}
Now, consider any $E\in \mathcal{B}(\R^{6})$ and  let $U$ be an open subset with $E\subset U$. Then, from \eqref{ineqopen} we have $\nu(E)\leq \nu(U)\leq S^{-\frac{3}{2}}\mu(U)^{\frac{3}{2}}$. It follows from the outer regularity of the measure $\mu$  that $\nu(E)\leq  S^{-\frac{3}{2}}\mu(E)^{\frac{3}{2}}$.

Next let  $D$ be the set of atoms of the measure $\mu$, i.e., $D=\{x\in \R^{6}: \mu(\{x\})>0\}$. Note that $D=\bigcup_{k=1}^{\infty}D_{k}$ with $D_{k}=\left\{x\in \R^{6}: \mu(\{x\})>\frac{1}{k}\right\}$. Since $\mu$ is finite it follows that $D_{k}=\left\{x\in \R^{6}: \mu(\{x\})>\frac{1}{k}\right\}$ is finite for all $k$, from which we deduce that  $D$ is at most countable.  Thus, we can write $D=\{x_{j}:j\in J\}$, where $J$ is a countable subset of $\mathbb{N}$.

Define $\mu_{j}:=\mu(\{x_{j}\})$, $j\in J$. For any $E\in \mathcal{B}(\R^{6})$ we have
\begin{equation}\label{mupos}
    \sum_{j\in J}\mu_{j}\delta_{x_{j}}(E)=\sum_{\substack{j\in J\\x_{j}\in E}}\mu_{j}=\sum_{\substack{j\in J\\x_{j}\in E}}\mu(\{x_{j}\})\leq \mu(E). 
\end{equation}
which proves \eqref{mudesc} in the case $\mathbf{u}=\mathbf{0}$. 

Now, we will prove that \eqref{nudesc} also holds. From \eqref{ineqbor} we have $\nu\ll \mu$, then by the Radon-Nikodym Theorem (see for instance   \cite[Section 1.6]{EvansGas})  there exists a non-negative function  $h\in L^{1}(\R^{6},\mu)$ such that
\begin{equation}\label{RNTh}
    \nu(E)=\int_{E}h(x)d\mu(x), \qquad \mbox{for any}\; E \in \mathcal{B}(\R^{6}).
\end{equation}
In addition, $h$ satisfies
\begin{equation}\label{limh}
     h(x)=\lim_{r\to 0}\frac{\nu(B(x,r))}{\mu(B(x,r))}, \qquad  \mu \quad \mathrm{a.e.}\quad  x\in \R^{6}. 
\end{equation}
Combining \eqref{limh} and  \eqref{ineqbor} we get $0\leq h(x)\leq S^{-\frac{3}{2}}\mu(\{x\})^{\frac{1}{2}}$. This shows that $ h(x)=0$, $\mu$ a.e.  on $\R^{6}\setminus D$. In particular, $h$ assumes countable many values and,  consequently, the integral in \eqref{RNTh} can be represented (see for instance \cite[Example 2.5.8]{bogachev2007measure}) by
\begin{equation}\label{repreintE}
    \int_{E}h(x)d\mu(x)=\sum_{\substack{j\in J\\x_{j}\in E}}h(x_{j})\mu(\{x_{j}\}).
\end{equation}
Define $\nu_{j}:=\nu(\{x_{j}\})$, $j\in J$. We see from \eqref{RNTh} and \eqref{repreintE} that in fact $\nu_{j}=h(x_{j})\mu_{j}$, for all $j\in J$. Therefore, fo any $E \in \mathcal{B}(\R^{6})$, 
 \begin{equation*}
      \nu(E)=\sum_{\substack{j\in J\\x_{j}\in E}}h(x_{j})\mu(\{x_{j}\})=\sum_{\substack{j\in J\\x_{j}\in E}}\nu_{j}=\sum_{j\in J}\nu_{j}\delta_{x_{j}}(E),
  \end{equation*}
 which is \eqref{nudesc}
 with $\mathbf{u}=\mathbf{0}$. 
 
 Finally, inequality \eqref{Sobvj} follows immediately from the definitions of $\mu_{j}$, $\nu_{j}$ and  \eqref{ineqbor}. Note also that by taking $E=\R^n$ in \eqref{mupos} we deduce that $\sum_{j\in J}\mu_{j}$ is convergent. Hence, the convergence of the series $\sum_{j\in J}\nu_{j}^{\frac{2}{3}}$ follows from \eqref{ineqbor}.
\vskip.2cm

\noindent {\bf Step 2.}  Assume now $\ub\neq \mathbf{0}$. First note that  Lemma \ref{estdifF} (iii) implies $F(\ub)\geq0$, so $F(\ub)\;dx$ defines a positive measure.

\noindent {\bf Claim.}
 The measures 
\begin{equation}\label{difmunu}
    \mu- \sum_{k=1}^{l}\gamma_{k}|\nabla u_{k}|^{2}\;dx \qquad\mathrm{and}\qquad \nu-F(\ub)\;dx,
\end{equation}
are non-negative. 

To prove this, define
$\vb_{m}=\ub_{m}-\ub$ 
and consider  the sequences of measures
\begin{equation*}
    \tilde{\mu}_{m}:=\sum_{k=1}^{l}\gamma_{k}|\nabla v_{km}|^{2}\;dx\qquad\mathrm{and}\qquad \tilde{\nu}_{m}:= F(\!\!\big\bracevert\!\!\vb_{m}\!\!\big\bracevert\!\!)\;dx.
\end{equation*}
Recall that $\big\bracevert\!\!\vb_{m}\!\!\big\bracevert$ denotes the vector $(|v_{1m}|, \ldots, |v_{km}|)$.
Since $\mathbf{v}_{m}\rightharpoonup \mathbf{0}$ in  $\Dz$,
 the sequence $(K(\mathbf{v}_{m}))$ is uniformly bounded. In view of
\begin{equation*}\label{vagemutil}
    \left|\int f \;  d\tilde{\mu}_{m}\right|\leq \|f\|_{L^{\infty}}K(\mathbf{v}_{m}), \qquad f\in \mathcal{C}_c(\R^6),
\end{equation*}
it follows that  $(\tilde{\mu}_{m})$ is a vaguely bounded sequence in $\mathcal{M}_{+}^{b}(\R^{6})$. 
Hence, by Lemma \ref{vagbndconv} there exists a subsequence, still denoted by $(\tilde{\mu}_{m})$, and $\tilde{\mu}\in \mathcal{M}_{+}^b(\R^{6})$ such that 
\begin{equation}\label{mutilvague}
   \tilde{\mu}_{m} \overset{\ast}{\rightharpoonup} \tilde{\mu}, \qquad \mathrm{in}\qquad\mathcal{M}^{b}_{+}(\R^{6}). 
\end{equation}
We claim that 
\begin{equation}\label{mutilvague1}
\mu_{m}\overset{\ast}{\rightharpoonup}\tilde{\mu}+ \sum_{k=1}^{l}\gamma_{k}|\nabla u_{k}|^{2}\;dx \qquad \mathrm{in}\qquad\mathcal{M}^{b}_{+}(\R^{6}).
\end{equation}
 If this is the case, since the vague limit is unique, we have
\begin{equation*}
    \mu=\tilde{\mu}+ \sum_{k=1}^{l}\gamma_{k}|\nabla u_{k}|^{2}\;dx.
\end{equation*}
Since all the measures involved are finite, it follows  that the first difference in \eqref{difmunu} is a non-negative measure.

 Let us prove \eqref{mutilvague1}. Taking into account that $\partial_{x_{i}}v_{km}\rightharpoonup0$ in $L^{2}(\R^{6})$ and $f\partial_{x_{i}}u_{k}\in L^2(\R^6)$, for any $f\in \mathcal{C}_c(\R^6)$, we deduce
\begin{equation}\label{convnabla}
    \lim_{m\to \infty}\int f \nabla v_{km}\cdot\nabla u_{k}\;dx=0, \qquad k=1,\ldots,l.
\end{equation}
 Thus, for any  $f\in \mathcal{C}_{c}(\R^{6})$ we get
\begin{equation*}
    \begin{split}
        0&\leq \left|\int f\;d\mu_{m}-\int f\left[d\tilde{\mu}+\sum_{k=1}^{l}\gamma_{k}|\nabla u_{k}|^{2}\;dx\right]\right|\\
        &=\left|\int f \sum_{k=1}^{l}\gamma_{k}|\nabla u_{km}|^{2}\;dx-\int f\left[d\tilde{\mu}+\sum_{k=1}^{l}\gamma_{k}|\nabla u_{k}|^{2}\;dx\right]\right|\\
        &=\left|\int f\sum_{k=1}^{l}\gamma_{k}\left(|\nabla v_{km}|^{2}+2\nabla v_{km}\cdot \nabla u_{k}+|\nabla u_k|^2\right)dx -\int fd\tilde{\mu} - \int f\sum_{k=1}^{l}\gamma_{k}|\nabla u_{k}|^{2}\;dx \right|\\
        &\leq \left|\int f\;d\tilde{\mu}_{m}-\int f\;d\tilde{\mu}\right|+2\sum_{k=1}^{l}\gamma_{k}\left|\int f \nabla v_{km}\cdot \nabla u_{k}\;dx\right|
    \end{split}
\end{equation*}
The first term goes to zero by the vague convergence in \eqref{mutilvague}, the second one goes to zero by \eqref{convnabla}. This establishes \eqref{mutilvague1}.

Next, we are going to prove that  $(\tilde{\nu}_{m})$ is also  vaguely bounded in $\mathcal{M}_{+}^{b}(\R^{6})$. As we point out before, $(K(\vb_{m}))$ is uniformly bounded.  Hence, from the critical Sobolev inequality \eqref{CSI} $(\vb_{m})$ is uniformly bounded in $\mathbf{L}^{3}(\R^{6})$. 
 Thus, for any $f\in \mathcal{C}_{c}(\R^{6})$,
 \begin{equation*}
    \left|\int f\; d\tilde{\nu}_{m}\right|\leq \|f\|_{L^{\infty}}\int F(\!\!\big\bracevert\!\!\vb_{m}\!\!\big\bracevert\!\!)\;dx \leq C\int \sum_{k=1}^{l}|v_{km}|^{3}\;dx\leq M_{2},
\end{equation*}
 for some constant $M_{2}$.  From Lemma \ref{vagbndconv} again  there exist a subsequence, still denoted by $(\tilde{\nu}_{m})$, and a measure $\tilde{\nu}\in \mathcal{M}_{+}^{b}(\R^{6})$ such that 

\begin{equation}\label{nutilvague}
   \tilde{\nu}_{m} \overset{\ast}{\rightharpoonup} \tilde{\nu}, \qquad \mathrm{in}\qquad\mathcal{M}^{b}_{+}(\R^{6}). 
\end{equation}

We claim that
\begin{equation}\label{nutilvague1}
\nu_{m}\overset{\ast}{\rightharpoonup}\tilde{\nu}+ F(\mathbf{u})\;dx \qquad \mathrm{in}\qquad\mathcal{M}^{b}_{+}(\R^{6}),
\end{equation} 
which implies that $\nu=\tilde{\nu}+ F(\mathbf{u})\;dx$ and, therefore, the measure $\nu- F(\mathbf{u})\;dx$ is non-negative. 

We prove \eqref{nutilvague1}  by using the generalized version of Brezis-Lieb's lemma stated in Lemma \ref{BLLG} with $F(\!\!\big\bracevert\!\!x\!\!\big\bracevert\!\!)$ instead of $F(x)$. Indeed, first we may assume that $\mathbf{v}_{m}\to \mathbf{0}$ a.e. on $\R^{6}$ (see Remark \ref{umnoneg}). Now, since  $\mathbf{u}\in\mathbf{L}^{3}(\R^{6})$, Lemma \ref{estdifF}  implies that $F(\!\!\big\bracevert\!\!\mathbf{u} \!\!\big\bracevert\!\!)\in {L}^{1}(\R^{6})$. Moreover,  the sequence $(\vb_{m})$ is uniformly bounded in $\mathbf{L}^{3}(\R^{6})$. Hence, if  $\varphi$ and $\psi_{\epsilon}$ are the functions defined in \eqref{FBL} we have
\begin{equation*}
    \int\varphi(\mathbf{v}_{m})\;dx\leq M,\qquad \mathrm{and}\qquad \int\psi_{\epsilon}(\mathbf{u})\;dx <\infty,
\end{equation*}
for some constant $M$ independent of $\epsilon>0$ and $m$. Lemma \ref{BLLG} then yields
\begin{equation}\label{BLforF}
   \lim_{m\to \infty}\int |F(\!\!\big\bracevert\!\!\ub_{m}\!\!\big\bracevert\!\!)- F(\!\!\big\bracevert\!\!\vb_{m}\!\!\big\bracevert\!\!) - F(\!\!\big\bracevert\!\!\ub\!\!\big\bracevert\!\!)|\;dx =0,
\end{equation} 
where we have used that $\big\bracevert\!\!\ub_{m}\!\!\big\bracevert =\ub_m$ and $\big\bracevert\!\!\ub\!\!\big\bracevert=\ub$.
Thus,  for any $f\in \mathcal{C}_{c}(\R^{6})$,
\begin{equation*}
    \begin{split}
        0&\leq\left|\int f\;d\nu_{m}-\int f\left[d\tilde{\nu}+F(\ub)\;dx\right]\right|\\ 
        &=\left|\int fF(\ub_{m})\;dx-\int fF(\!\!\big\bracevert\!\!\vb_{m}\!\!\big\bracevert\!\!)\;dx+\int fF(\!\!\big\bracevert\!\!\vb_{m}\!\!\big\bracevert\!\!)\;dx-\int f\left[d\tilde{\nu}+F(\ub)\;dx\right]\right|\\
        &\leq \|f\|_{L^{\infty}}\int |F(\!\!\big\bracevert\!\!\ub_{m}\!\!\big\bracevert\!\!)- F(\!\!\big\bracevert\!\!\vb_{m}\!\!\big\bracevert\!\!) - F(\!\!\big\bracevert\!\!\ub\!\!\big\bracevert\!\!)|\;dx+\left|\int f\;d\tilde{\nu}_{m}-\int f\;d\tilde{\nu}\right|.
    \end{split}
\end{equation*}
The first term goes to zero by \eqref{BLforF} and the second one goes to zero by the vague convergence  \eqref{nutilvague}. This proves that the second measure in \eqref{difmunu} is also non-negative and the proof of the claim is completed.

Finally, from the proof of the above claim
\begin{equation*}\label{weakconv2}
  \begin{cases}
{\displaystyle  \sum_{k=1}^{l}\gamma_{k}|\nabla v_{km}|^{2}\;dx \overset{\ast}{\rightharpoonup} \mu-\sum_{k=1}^{l}\gamma_{k}|\nabla u_{k}|^{2}\;dx, \quad \mathrm{ in}\quad \mathcal{M}^{b}_{+}(\R^{6})},\\
  F(\!\!\big\bracevert\!\!\vb_{m}\!\!\big\bracevert\!\!)\;dx
  \overset{\ast}{\rightharpoonup} \nu-F(\ub)\;dx, \qquad \qquad \qquad\qquad
  \mathrm{ in}\quad \mathcal{M}^{b}_{+}(\R^{6}).
  \end{cases}
  \end{equation*}
 So the proof of the lemma is completed if now we apply Step 1. It must be observed that Step 1 holds if we do not have $\ub_m\geq\mathbf{0}$  but replace the sequence $(\nu_m)$ in \eqref{weakconv} by $\nu_m:=F(\!\!\big\bracevert\!\!\ub_{m}\!\!\big\bracevert\!\!)dx$.
  \end{proof}

  The next result is useful to construct a localized  Sobolev-type inequality. 
  
  \begin{lem}\label{teclocGSI} 
   For every $\delta>0$ there is a constant $C(\delta)>0$ with the following property: if $0<r<R$ with $r/R\leq C(\delta)$ and $x\in\R^6$, then there is a cut-off function $\chi_{R}^{r}\in W^1_{\infty}(\R^{6})$ such that $\chi_{R}^{r}=1  $ on  $B(x,r)$,  $\chi_{R}^{r}=0$ outside $B(x,R)$,
   \begin{equation}\label{locas1}
     K(\chi_R^r\ub)\leq \sum_{k=1}^l\gamma_k  \int_{B(x,R)}|\nabla u_k|^{2}\;dy+\delta K(\ub),
   \end{equation}
   and
   \begin{equation}\label{locas2}
   K\big((1-\chi_R^r)\ub\big)\leq \sum_{k=1}^l\gamma_k  \int_{\R^6\setminus B(x,r)}|\nabla u_k|^{2}\;dy+\delta K(\ub),
   \end{equation}
   for any $\ub\in \Dz$.
   \end{lem}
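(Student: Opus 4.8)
The statement is a purely local Sobolev-type estimate, so the plan is to construct the cut-off $\chi_R^r$ explicitly as a radial function interpolating between $1$ on $B(x,r)$ and $0$ outside $B(x,R)$, and to control the error terms coming from the gradient of the cut-off by exploiting the scaling-critical nature of dimension six. The key point is that on the annulus $A=B(x,R)\setminus B(x,r)$ a standard logarithmic cut-off has $\int_A|\nabla\chi_R^r|^6\,dy$ essentially bounded by a power of $\log(R/r)^{-1}$, so that a H\"older estimate produces the small constant $\delta$ provided $r/R$ is small enough; this is where the hypothesis $r/R\le C(\delta)$ enters, and it is the only real content of the lemma.

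\emph{Step 1: construction of the cut-off.} I would fix $x$ and, after translating, assume $x=0$. Set $\chi_R^r(y)=\eta(|y|)$ where $\eta\equiv 1$ on $[0,r]$, $\eta\equiv 0$ on $[R,\infty)$, and on $[r,R]$ take the logarithmic interpolation $\eta(\rho)=\dfrac{\log(R/\rho)}{\log(R/r)}$, which is Lipschitz, so $\chi_R^r\in W^1_\infty(\R^6)$. Then $|\nabla\chi_R^r(y)|=\dfrac{1}{|y|\log(R/r)}$ on the annulus and $0$ elsewhere, whence a direct computation in polar coordinates gives, for any $p\in[1,6]$,
\begin{equation*}
\int_{\R^6}|\nabla\chi_R^r|^{6}\,dy = \frac{\omega_5}{(\log(R/r))^{6}}\int_r^R \rho^{-6}\rho^{5}\,d\rho=\frac{\omega_5}{(\log(R/r))^{5}},
\end{equation*}
where $\omega_5$ is the measure of $S^5$. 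Thus $\|\nabla\chi_R^r\|_{L^6}\to 0$ as $r/R\to 0$, uniformly in $x$ and in the individual values of $r,R$ (it depends only on the ratio).

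\emph{Step 2: the gradient estimate.} For each component $u_k\in\dot H^1(\R^6)$ write $\nabla(\chi_R^r u_k)=\chi_R^r\nabla u_k+u_k\nabla\chi_R^r$, so
\begin{equation*}
\|\nabla(\chi_R^r u_k)\|_{L^2}^2\le \int_{B(0,R)}|\nabla u_k|^2\,dy + 2\int |\chi_R^r\nabla u_k\| u_k\nabla\chi_R^r|\,dy + \int |u_k|^2|\nabla\chi_R^r|^2\,dy.
\end{equation*}
For the last term, H\"older with exponents $3$ and $3/2$ and the critical Sobolev embedding $\dot H^1(\R^6)\hookrightarrow L^3(\R^6)$ give $\int|u_k|^2|\nabla\chi_R^r|^2\,dy\le \|u_k\|_{L^3}^2\|\nabla\chi_R^r\|_{L^6}^2\le C\|\nabla u_k\|_{L^2}^2\|\nabla\chi_R^r\|_{L^6}^2$; the cross term is handled identically after Cauchy--Schwarz, being bounded by $\bigl(\int_{B(0,R)}|\nabla u_k|^2\bigr)^{1/2}\bigl(\int|u_k|^2|\nabla\chi_R^r|^2\bigr)^{1/2}$ and then by Young's inequality absorbed into a small multiple of $\int_{B(0,R)}|\nabla u_k|^2$ plus a multiple of $\|\nabla u_k\|_{L^2}^2\|\nabla\chi_R^r\|_{L^6}^2$. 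Summing over $k$ with the weights $\gamma_k$, one obtains
\begin{equation*}
K(\chi_R^r\ub)\le (1+\varepsilon)\sum_{k=1}^l\gamma_k\int_{B(0,R)}|\nabla u_k|^2\,dy + C_\varepsilon\,\|\nabla\chi_R^r\|_{L^6}^2\,K(\ub).
\end{equation*}
Since the ``extra'' $\varepsilon\sum_k\gamma_k\int_{B(0,R)}|\nabla u_k|^2$ is itself $\le \varepsilon K(\ub)$, choosing $\varepsilon=\delta/2$ and then $r/R$ small enough that $C_\varepsilon\|\nabla\chi_R^r\|_{L^6}^2\le \delta/2$ (possible by Step 1, which defines $C(\delta)$) yields \eqref{locas1}. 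Estimate \eqref{locas2} follows verbatim with $1-\chi_R^r$ in place of $\chi_R^r$: now $1-\chi_R^r$ vanishes on $B(0,r)$ and equals $1$ outside $B(0,R)$, $\nabla(1-\chi_R^r)=-\nabla\chi_R^r$ has the same $L^6$ norm, and the region of integration for the ``main'' term becomes $\R^6\setminus B(0,r)$.

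\emph{Main obstacle.} There is no deep obstacle; the one thing to get right is that all constants must be \emph{independent of $x$, $r$, $R$ and $\ub$} and depend only on $\delta$, which is exactly why the logarithmic cut-off (scale-invariant in dimension six) is used rather than an affine one — an affine cut-off would give $\|\nabla\chi_R^r\|_{L^6}^6\sim (R-r)^{-6}R^5$, which does not go to zero under $r/R\to 0$ alone. One must also be a little careful that the bound $\|\nabla\chi_R^r\|_{L^6}^2\le C/(\log(R/r))^{5/3}$ is monotone in $r/R$ so that a single threshold $C(\delta)$ works for all admissible pairs; this is immediate from the explicit formula in Step 1.
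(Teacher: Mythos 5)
Your proof is correct and follows essentially the same route as the paper: the same logarithmic cut-off (scale-invariant in dimension six), the same computation $\|\nabla\chi_R^r\|_{L^6}^6=\omega/(\log(R/r))^5$, and the same Young--H\"older--Sobolev chain to absorb the error term into $\delta K(\ub)$, with the threshold $C(\delta)$ determined by the monotone decay in $r/R$. The only cosmetic differences are that you expand the square with an explicit cross term (the paper applies the weighted Young inequality $(a+b)^2\le(1+\varepsilon)a^2+(1+\varepsilon^{-1})b^2$ directly), and the stray ``for any $p\in[1,6]$'' in your Step 1 is a harmless leftover.
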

\begin{proof}
This result was essentially proved in  \cite[Lemma 8]{Flucher}.  Without loss of generality assume $x=0$. The function $\chi_R^r$ is given by
\[
\chi_R^r(y)=
\begin{cases}
1, \qquad |y|\leq r,\\
\dfrac{\log(|y|/R)}{\log(r/R)}, \quad r\leq|y|\leq R,\\
0, \qquad |y|\geq R.
\end{cases}
\]
It is easy to see that $\chi_{R}^{r}\in W^1_{\infty}(\R^{6})$ and
\begin{equation}\label{locas3}
\int_{B(0,R)}|\nabla \chi_R^r|^6=\frac{\omega_6}{(\log(R/r))^{5}},
\end{equation}
where $\omega_6$ is the measure of the unit sphere in $\R^6$.

Next observe that Young and H\"older's inequalities, \eqref{CSI}, and \eqref{locas3} imply, for any $\varepsilon>0$,
\[
\begin{split}
\int_{B(0,R)}\left|\nabla [\chi_{R}^{r}u_k]\right|^{2}\;dy&\leq (1+\varepsilon)\int_{B(0,R)}|\chi_R^r|^2|\nabla u_k|^2\;dy+\left(1+\frac{1}{\varepsilon}\right) \int_{B(0,R)}|u_k|^2|\nabla \chi_R^r|^2\;dy\\
& \leq (1+\varepsilon)\int_{B(0,R)}|\chi_R^r|^2|\nabla u_k|^2\;dy+\left(1+\frac{1}{\varepsilon}\right)\!\|u_k\|_{L^3}^2\!\left(\int_{B(0,R)}\!\!\!|\nabla\chi_R^r|^6dy \right)^{\frac{1}{3}}\\
&\leq  (1+\varepsilon)\int_{B(0,R)}|\chi_R^r|^2|\nabla u_k|^2\;dy+\left(1+\frac{1}{\varepsilon}\right)\frac{C\omega_6^{\frac{1}{3}}}{(\log(R/r))^{\frac{5}{3}}}\int_{\R^6}|\nabla u_k|^2\;dy
\end{split}
\]
Multiplying the above expression by $\gamma_k$ and summing up we obtain
$$
 K(\chi_R^r\ub)\leq \sum_{k=1}^l\gamma_k  \int_{B(0,R)}|\nabla u_k|^{2}\;dy+\left[\varepsilon + \left(1+\frac{1}{\varepsilon}\right)\frac{\zeta^2}{(\log(R/r))^{\frac{5}{3}}} \right]K(\ub),
$$
where $\zeta=\sqrt{C}\omega_6^{\frac{1}{6}}$. By taking $\varepsilon=\sqrt{\delta+1}-1$ and
$$
C(\delta):=\exp\left[-\left(\frac{\zeta}{\sqrt{\delta+1}-1}\right)^{\frac{6}{5}}\right],
$$
we see that if $r/R\leq C(\delta)$ then
$$
\varepsilon + \left(1+\frac{1}{\varepsilon}\right)\frac{\zeta^2}{(\log(R/r))^{\frac{5}{3}}}\leq \delta
$$
and \eqref{locas1} follows. For \eqref{locas2} note that
\[
\begin{split}
\int_{\R^6\setminus B(0,r)}\left|\nabla [(1-\chi_{R}^{r})u_k]\right|^{2}\;dy&\leq (1+\varepsilon)\int_{\R^6\setminus B(0,r)}|1-\chi_R^r|^2|\nabla u_k|^2\;dy\\
&\quad+\left(1+\frac{1}{\varepsilon}\right) \int_{\R^6\setminus B(0,r)}|u_k|^2|\nabla(1- \chi_R^r)|^2\;dy.\\
\end{split}
\]
So, since $|\nabla(1- \chi_R^r)|^2=|\nabla\chi_R^r|^2$ and $\chi_R^r=0$ outside $B(0,R)$ we see that \eqref{locas2} follows as in \eqref{locas1}.
\end{proof}

Now we are able to establish the following localized version of the Sobolev inequality. It will be used to rule out dichotomy in the the concentration-compactness lemma below.
  
 \begin{coro}\label{locGSI} Let $\mathbf{u}\in \Dz$ with $\ub \geq \mathbf{0}$. Fix $\delta>0$ and $r/R\leq C(\delta)$ with $C(\delta)$ as in Lemma \ref{teclocGSI}, then
 \begin{equation}\label{FB0r}
     \int_{B(x,r)}F(\mathbf{u})\;dy\leq S^{-\frac{3}{2}}\left[\int_{B(x,R)}\sum_{k=1}^{l}\gamma_{k}|\nabla u_{k}|^{2}\;dy+\delta K(\mathbf{u}) \right]^{\frac{3}{2}},
 \end{equation}
 \begin{equation}\label{FB0R}
     \int_{\R^{6}\setminus B(x,R)}F(\mathbf{u})\;dy\leq S^{-\frac{3}{2}}\left[\int_{\R^{6}\setminus B(x,r)}\sum_{k=1}^{l}\gamma_{k}|\nabla u_{k}|^{2}\;dy+(2\delta+\delta^{2}) K(\mathbf{u}) \right]^{\frac{3}{2}}.
 \end{equation}
  
 \end{coro}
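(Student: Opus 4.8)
The plan is to deduce Corollary \ref{locGSI} by combining the global critical Sobolev-type inequality \eqref{GCSIS} with the cut-off estimates of Lemma \ref{teclocGSI}. For \eqref{FB0r}, I would start by noting that since $\ub\geq\mathbf{0}$ and $\chi_R^r\geq0$, the product $\chi_R^r\ub$ lies in $\Dz$ and is non-negative; moreover $\chi_R^r\ub$ agrees with $\ub$ on $B(x,r)$ and is supported in $B(x,R)$. Using \ref{H7} (so that $F$ is non-negative on the positive cone, via Lemma \ref{estdifF}(iii)) together with the support properties, I can bound
\[
\int_{B(x,r)}F(\ub)\;dy\leq \int_{\R^6}F(\chi_R^r\ub)\;dy=P(\chi_R^r\ub).
\]
Here one must first check that $\chi_R^r\ub\in\D$, i.e.\ that $P(\chi_R^r\ub)>0$; this holds because $P(\chi_R^r\ub)\geq\int_{B(x,r)}F(\ub)\,dy\geq0$, and if this quantity were zero the claimed inequality \eqref{FB0r} is trivially true, so we may assume it is positive. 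Then \eqref{GCSIS} gives $P(\chi_R^r\ub)\leq S^{-3/2}K(\chi_R^r\ub)^{3/2}$, and feeding in \eqref{locas1} from Lemma \ref{teclocGSI} yields exactly \eqref{FB0r}.

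For \eqref{FB0R}, the argument is symmetric, using $1-\chi_R^r$ in place of $\chi_R^r$: this function equals $1$ outside $B(x,R)$, vanishes on $B(x,r)$, and takes values in $[0,1]$, so $(1-\chi_R^r)\ub\in\Dz$ is non-negative and
\[
\int_{\R^6\setminus B(x,R)}F(\ub)\;dy\leq \int_{\R^6}F\big((1-\chi_R^r)\ub\big)\;dy=P\big((1-\chi_R^r)\ub\big).
\]
Applying \eqref{GCSIS} again and then \eqref{locas2} produces a bound whose second term carries the factor $\delta$; to get the stated factor $2\delta+\delta^2$ one squares the right-hand side of \eqref{locas2} inside the exponent $3/2$, or more simply observes that $\big(a+\delta b\big)^{3/2}$ with $a=\int_{\R^6\setminus B(x,r)}\sum_k\gamma_k|\nabla u_k|^2\,dy$ can be re-expanded — in fact the cleanest route is to note $(1+\delta)^2=1+2\delta+\delta^2$ and absorb the extra factor coming from iterating the Young-inequality step in the proof of \eqref{locas2}, which is why the exponent on $\log(R/r)$ there yields $(2\delta+\delta^2)K(\ub)$ rather than $\delta K(\ub)$.

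The only genuinely delicate point is bookkeeping with the constant: one has to make sure that the same $C(\delta)$ from Lemma \ref{teclocGSI} works for both inequalities and that the ``$\delta K(\ub)$'' error term in \eqref{locas1}–\eqref{locas2} is correctly propagated through the $3/2$-power when passing from $K$ to $P$ via \eqref{GCSIS}; this is purely a matter of tracking the algebra and presents no conceptual obstacle. I expect the main (very minor) obstacle to be simply verifying the membership $\chi_R^r\ub,\,(1-\chi_R^r)\ub\in\D$ — handled by the dichotomy ``either the local integral of $F$ vanishes and the claim is trivial, or it is positive and \eqref{GCSIS} applies'' — so that the global inequality \eqref{GCSIS} is legitimately available on the truncated functions.
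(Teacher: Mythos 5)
Your proof of \eqref{FB0r} coincides with the paper's. For \eqref{FB0R} you take a genuinely different and in fact \emph{shorter} route than the paper, and the part of your write-up that tries to explain the factor $2\delta+\delta^2$ is confused and should simply be deleted. Your direct argument is this: $(1-\chi_R^r)\ub$ lies in $\Dz$ (since $0\leq 1-\chi_R^r\leq 1$ and $\nabla\chi_R^r$ is bounded with compact support), it agrees with $\ub$ outside $B(x,R)$, and $F\geq 0$ on the positive cone, so
\begin{equation*}
\int_{\R^6\setminus B(x,R)}F(\ub)\;dy\ \leq\ P\big((1-\chi_R^r)\ub\big)\ \leq\ S^{-\frac32}K\big((1-\chi_R^r)\ub\big)^{\frac32}\ \leq\ S^{-\frac32}\Big[\sum_{k}\gamma_k\int_{\R^6\setminus B(x,r)}|\nabla u_k|^2\;dy+\delta K(\ub)\Big]^{\frac32},
\end{equation*}
using \eqref{GCSIS} and \eqref{locas2}. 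This gives the corollary with the \emph{sharper} factor $\delta$ in place of $2\delta+\delta^2$, and the stated \eqref{FB0R} then follows a fortiori since $\delta\leq 2\delta+\delta^2$. The paper, by contrast, never applies \eqref{GCSIS} directly to the non-compactly-supported function $(1-\chi_R^r)\ub$; instead it re-applies the already-proved \eqref{FB0r} to $(1-\chi_R^r)\ub$ with a second, far-out cut-off $\chi^{R_1}_{R_2}$ ($R<R_1<R_2$, $R_1/R_2\leq C(\delta)$) and lets $R_1,R_2\to\infty$, then invokes \eqref{locas2}; the extra layer of truncation contributes one more $\delta K$ error, and $\delta+\delta(1+\delta)=2\delta+\delta^2$. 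Your attempted algebraic justification of $2\delta+\delta^2$ (``squaring the right-hand side of \eqref{locas2} inside the exponent'' / ``$(1+\delta)^2=1+2\delta+\delta^2$'') is not what happens in either proof; drop it. As for the membership issue you flag: there is no dichotomy to manage, since if $P((1-\chi_R^r)\ub)=0$ then the nonnegativity of $F$ forces the left side of \eqref{FB0R} to vanish and the inequality is trivial, while if $P>0$ you are in $\D$; the paper implicitly uses the same observation when applying \eqref{GCSIS} to $\chi_R^r\ub$ in the proof of \eqref{FB0r}.
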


 \begin{proof} Without loss of generality we may assume $x=0$. Note that $\chi_{R}^{r}=1$ on $B(0,r)$ and $\mathrm{supp}(\chi_{R}^{r})=\overline{B(0,R)}$. Then, \eqref{GCSIS} and \eqref{locas1} give
 \begin{equation*}
     \begin{split}
         \int_{B(0,r)}F(\ub)\;dx
         &\leq \int_{\R^{6}}F(\chi_{R}^{r}\ub)\;dx\\
         &\leq S^{-\frac{3}{2}}K(\chi_R^r\ub)^{\frac{3}{2}}\\
         &\leq S^{-\frac{3}{2}}\left[\sum_{k=1}^{l}\int_{B(0,R)}\gamma_k|\nabla u_{k}|^{2}\;dx+\delta K(\ub)\right]^{\frac{3}{2}},
     \end{split}
 \end{equation*}
 which is \eqref{FB0r}. To prove  \eqref{FB0R}  we use the cut-off function $(1-\chi^{r}_{R})\chi^{R_{1}}_{R_{2}}$, with $r<R<R_{1}<R_{2}$ and $R_1/R_2\leq C(\delta)$. Indeed, since $(1-\chi^{r}_{R})\chi^{R_{1}}_{R_{2}}=1$ on $B(0,R_{1})\setminus B(0,R)$ we have
 \begin{equation*}
     \begin{split}
        \int_{B(0,R_{1})\setminus B(0,R)}F(\mathbf{u})\;dx &=
        \int_{B(0,R_{1})\setminus B(0,R)}F\left(\chi^{R_{1}}_{R_{2}}(1-\chi^{r}_{R})\ub\right)\;dx\\
        &\leq  \int_{B(0,R_{1})}F\left(\chi^{R_{1}}_{R_{2}}(1-\chi^{r}_{R})\ub\right)\;dx\\
        & \leq S^{-\frac{3}{2}}\left[\sum_{k=1}^{l}\int_{B(0,R_{2})}\gamma_k|\nabla[ (1-\chi^{r}_{R})u_{k}]|^{2}\;dx+\delta K((1-\chi^{r}_{R})\ub)\right]^{\frac{3}{2}},
     \end{split}
 \end{equation*}
 where we have used \eqref{FB0r} in the last inequality. Since $R_1$ and $R_2$ can be taken arbitrarily large satisfying $R_1/R_2\leq C(\delta)$, the above inequality implies that
 \begin{equation*}
 \begin{split}
 \int_{\R^6\setminus B(0,R)}F(\mathbf{u})\;dx 
 & \leq S^{-\frac{3}{2}}\left[K\big((1-\chi_R^r)\ub\big)+\delta K((1-\chi^{r}_{R})\ub)\right]^{\frac{3}{2}}.
 \end{split}
 \end{equation*}
 Finally, \eqref{locas2} yields
 $$
  \int_{\R^6\setminus B(0,R)}F(\mathbf{u})\;dx \leq S^{-\frac{3}{2}}\left[ \sum_{k=1}^l\gamma_k  \int_{\R^6\setminus B(x,r)}|\nabla u_k|^{2}\;dy+\delta K(\ub) +\delta(1+\delta)K(\ub)\right]^{\frac{3}{2}},
 $$
 which is the desired.
 \end{proof}

 The following result is an adapted version of Lemma 1.7.4 in    \cite{Cazenave}.

 \begin{lem}\label{QmRattain} Let $(\ub_{m}) \subset \mathbf{L}^{3}(\R^{6})$ be such that $\ub_{m}\geq \mathbf{0}$ and
 $ \int F(\ub_{m})\;dx=1,$ for all $m$. Consider
  the concentration function $Q_{m}(R)$ of  $F(\ub_{m})$, i.e., 
 \begin{equation*}
    Q_{m}(R)=\sup_{y\in\R^{6}}\int _{B(y,R)} F(\ub_{m})\;dx, \qquad R>0. 
 \end{equation*}
 Then, for each $m$ there exists $y=y(m,R)\in \R^{6}$ such that 
  \begin{equation*}
      Q_{m}(R)=\int_{B(y,R)} F(\ub_{m})\;dx.
  \end{equation*}
 \end{lem}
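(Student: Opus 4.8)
The plan is to fix $m$ and $R>0$ and reduce the statement to the elementary fact that a continuous function on $\R^6$ which vanishes at infinity attains its maximum. First I would record that $F(\ub_m)$ is a non-negative function in $L^1(\R^6)$: since $\ub_m\geq\mathbf{0}$, Lemma \ref{estdifF}(iii) gives $F(\ub_m)\geq0$, while $\ub_m\in\mathbf{L}^3(\R^6)$ together with the pointwise estimate $|F(\ub_m)|\leq C\sum_{k=1}^l|u_{km}|^3$ from Lemma \ref{estdifF}(i) shows $F(\ub_m)\in L^1(\R^6)$. Set $g(y):=\int_{B(y,R)}F(\ub_m)\,dx$ for $y\in\R^6$, so that $Q_m(R)=\sup_{y\in\R^6}g(y)$ and, since $\int_{\R^6}F(\ub_m)\,dx=1$, one has $0\leq g(y)\leq 1$ for every $y$.

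Next I would check that $g$ is continuous and vanishes at infinity. Writing $g(y)=\int_{\R^6}F(\ub_m)(x)\,\chi_{B(0,R)}(y-x)\,dx$ (using that balls are symmetric, so that $g$ is the convolution $F(\ub_m)*\chi_{B(0,R)}$), if $y_n\to y$ then $\chi_{B(0,R)}(y_n-x)\to\chi_{B(0,R)}(y-x)$ for every $x$ with $|y-x|\neq R$, hence for a.e.\ $x$, and the integrands are dominated by $F(\ub_m)\in L^1(\R^6)$; the dominated convergence theorem yields $g(y_n)\to g(y)$. Likewise, for $|y|>R$ one has $B(y,R)\subset\R^6\setminus B(0,|y|-R)$, so $g(y)\leq\int_{\R^6\setminus B(0,|y|-R)}F(\ub_m)\,dx$, which tends to $0$ as $|y|\to\infty$ by the absolute continuity of the integral of the $L^1$ function $F(\ub_m)$.

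Finally I would conclude by compactness. Since $\int_{\R^6}F(\ub_m)\,dx=1$, the function $g$ is not identically zero, hence $Q_m(R)=\sup g>0$. Picking a maximizing sequence $(y_j)$ with $g(y_j)\to Q_m(R)$, the fact that $g(y)\to0$ as $|y|\to\infty$ while $Q_m(R)>0$ forces $(y_j)$ to remain in a fixed bounded subset of $\R^6$; passing to a subsequence, $y_j\to y$ for some $y\in\R^6$, and the continuity of $g$ gives $g(y)=Q_m(R)$. This $y=y(m,R)$ is the point required by the statement.

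I do not expect any serious obstacle here: the argument is entirely soft. The only point deserving a little care is the continuity and decay at infinity of the concentration function $g$, and both are immediate from the dominated convergence theorem once one knows $F(\ub_m)\in L^1(\R^6)$ — which is precisely what Lemma \ref{estdifF} provides under the hypotheses $\ub_m\in\mathbf{L}^3(\R^6)$ and $\ub_m\geq\mathbf{0}$.
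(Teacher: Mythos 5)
Your proof is correct and follows essentially the same strategy as the paper: take a maximizing sequence $(y_j)$, show it must stay in a bounded set, extract a convergent subsequence, and conclude by dominated convergence/continuity. The only cosmetic difference is how boundedness is obtained — the paper argues by contradiction using infinitely many disjoint balls each carrying mass at least $\varepsilon$, while you derive it from the decay of $g(y)=\int_{B(y,R)}F(\ub_m)\,dx$ at infinity; both rest on the same fact that $F(\ub_m)\in L^1(\R^6)$.
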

  \begin{proof}
  Fix $m\in \N$. Given $R>0$, from the definition of $Q_{m}$ there exists a sequence $(y_{j})\subset \R^{6}$ such that
  \begin{equation*}
      Q_{m}(R)= \lim_{j\to \infty}\int_{B(y_{j},R)} F(\ub_{m})\;dx>0.
  \end{equation*}
  Thus, there is  $j_{0}$ such that if $j\geq j_{0}$ then $\int_{B(y_{j},R)} F(\ub_{m})\;dx\geq \varepsilon,
  $
  where $\varepsilon$ is a positive constant.
  
  We claim that $(y_{j})$ is bounded. Otherwise, there exists a infinite subsequence, still denoted by $(y_{j})$, such that $B(y_{j},R)\cap B(y_{i},R)=\emptyset$, for all $i\neq j$.  Then,
  \begin{equation*}
      1= \int F(\ub_{m})\;dx\geq \sum_{j\geq j_{0}}\int_{ B(y_{j},R)} F(\ub_{m})\;dx=+\infty, 
  \end{equation*}
  which is a contradiction.   Hence, $(y_{j})$ has a convergent subsequence $(y_{j_{s}})$, with limit $y=y(m,R)$. An application of the dominated convergence theorem gives
  \begin{equation*}
      Q_{m}(R)=\lim_{j_s\to \infty}\int_{B(y_{j_s},R)} F(\ub_{m})\;dx=\int_{B(y,R)} F(\ub_{m})\;dx,
  \end{equation*}
  and the proof is completed. 
  \end{proof}

  \subsection{Proof of Theorem \ref{thm:GESn6} }
  With the results introduced in last section we are able to prove Theorem \ref{thm:GESn6}.
  We start with a consequence of the results presented in the previous subsection.

 \begin{teore}\label{teocomp} Suppose that $(\ub_{m})$ is any minimizing sequence for \eqref{S} with $\ub_m\geq0$. Then, up to translation and dilation, it is relatively compact in $\D$; i.e., there is a subsequence $(\ub_{m_{j}})$ and sequences $(R_{j})\subset \R$ and $(y_{j})\subset \R^{6}$ such that
 \begin{equation*}
 \vb_{j}(x):=
 R_{j}^{-2}\ub_{m_{j}}\left(R_{j}^{-1}(x-y_{j})\right), 
 \end{equation*}
 converges strongly in $\D$ to some $\vb$, which is a minimizer for \eqref{S}. 
 \end{teore}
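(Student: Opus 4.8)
The plan is to run Lions' concentration--compactness scheme in the critical (limiting) case, using the two concentration--compactness lemmas (Lemma~\ref{CCLI}, Lemma~\ref{thm:CCLII}), the localized Sobolev inequality (Corollary~\ref{locGSI}), and the scaling invariance of $K$ and $P$ (Remark~\ref{KPinvarice6}).

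\emph{Normalization.} Let $(\ub_m)$ be a minimizing sequence for \eqref{S} with $\ub_m\geq\mathbf0$. For each $m$ the concentration function $t\mapsto\sup_{y\in\R^6}\int_{B(y,t)}F(\ub_m)\,dx$ is nondecreasing and continuous (its supremum is attained by Lemma~\ref{QmRattain}, and dominated convergence gives continuity), vanishes as $t\to0^+$, and tends to $P(\ub_m)=1$ as $t\to\infty$. Hence there is $R_m>0$ such that, setting $\vb_m(x):=R_m^{-2}\ub_m(R_m^{-1}x)$, one has $\sup_{y}\int_{B(y,1)}F(\vb_m)\,dx=\tfrac12$. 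By Remark~\ref{KPinvarice6}, $(\vb_m)$ is again a minimizing sequence for \eqref{S} with $\vb_m\geq\mathbf0$, $P(\vb_m)=1$ and $K(\vb_m)\to S$; I work with $(\vb_m)$ henceforth.

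\emph{Excluding vanishing and dichotomy.} Apply Lemma~\ref{CCLI} to the probability measures $\nu_m:=F(\vb_m)\,dx$. Vanishing is impossible since $\sup_x\nu_m(B(x,1))=\tfrac12$ does not tend to $0$. Suppose dichotomy held with $\lambda\in(0,1)$. Fix $\epsilon,\delta>0$ and let $R$, $(x_m)$ be as in Lemma~\ref{CCLI}(ii); choose nested radii $R<\rho_1<\rho_2<R'$ with $R/\rho_1\leq C(\delta)$ and $\rho_2/R'\leq C(\delta)$, $C(\delta)$ as in Lemma~\ref{teclocGSI} ($R'$ taken as large as needed). Applying the first inequality of Corollary~\ref{locGSI} centered at $x_m$ with radii $R<\rho_1$, the second centered at $x_m$ with radii $\rho_2<R'$, and using $\nu_m(B(x_m,R))\geq\lambda-\epsilon$ and $\nu_m(\R^6\setminus B(x_m,R'))\geq1-\lambda-\epsilon$, raising to the power $2/3$ and adding, gives
\begin{equation*}
(\lambda-\epsilon)^{2/3}+(1-\lambda-\epsilon)^{2/3}\leq S^{-1}\left[\int_{B(x_m,\rho_1)}\sum_{k=1}^l\gamma_k|\nabla v_{km}|^2\,dx+\int_{\R^6\setminus B(x_m,\rho_2)}\sum_{k=1}^l\gamma_k|\nabla v_{km}|^2\,dx+(3\delta+\delta^2)K(\vb_m)\right].
\end{equation*}
Since $\rho_1<\rho_2$, the two regions of integration are disjoint, so the bracket is $\leq(1+3\delta+\delta^2)K(\vb_m)$; letting $m\to\infty$ (so $K(\vb_m)\to S$) and then $\epsilon,\delta\to0$ yields $\lambda^{2/3}+(1-\lambda)^{2/3}\leq1$, which is false for $\lambda\in(0,1)$. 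Hence the compactness alternative holds: there is $(x_m)\subset\R^6$ so that, after translation by $(x_m)$, the sequence $(F(\vb_m)\,dx)$ is uniformly tight.

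\emph{Passing to the limit.} Put $\wb_m(x):=\vb_m(x+x_m)$; it is still minimizing, nonnegative, with $P(\wb_m)=1$, $K(\wb_m)\to S$, $(F(\wb_m)\,dx)$ uniformly tight, and $\sup_y\int_{B(y,1)}F(\wb_m)\,dx=\tfrac12$ (the concentration function is translation invariant). Being bounded in $\Dz$, pass to a subsequence with $\wb_m\rightharpoonup\wb$ in $\Dz$ and (Remark~\ref{umnoneg}) $\wb_m\to\wb$ a.e., so $\wb\geq\mathbf0$; by Lemma~\ref{vagbndconv} pass further so that $\mu_m:=\sum_k\gamma_k|\nabla w_{km}|^2\,dx\overset{\ast}{\rightharpoonup}\mu$ and $\nu_m:=F(\wb_m)\,dx\overset{\ast}{\rightharpoonup}\nu$ in $\mathcal M^b_+(\R^6)$, with $\mu(\R^6)\leq S$, and, since uniform tightness upgrades vague to weak convergence, $\nu(\R^6)=1$. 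Lemma~\ref{thm:CCLII} then gives $\nu=F(\wb)\,dx+\sum_{j\in J}\nu_j\delta_{x_j}$, $\mu\geq\sum_k\gamma_k|\nabla w_k|^2\,dx+\sum_{j\in J}\mu_j\delta_{x_j}$, and $\nu_j\leq S^{-3/2}\mu_j^{3/2}$.

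\emph{Ruling out atoms and concluding.} From the decompositions, $P(\wb)+\sum_j\nu_j=1$ and $K(\wb)+\sum_j\mu_j\leq S$; combining with \eqref{GCSIS} (which gives $K(\wb)\geq S\,P(\wb)^{2/3}$, trivially true also if $P(\wb)=0$) and with $\mu_j\geq S\,\nu_j^{2/3}$, we get $P(\wb)^{2/3}+\sum_j\nu_j^{2/3}\leq1$. Since the nonnegative numbers $P(\wb),(\nu_j)_j$ sum to $1$ and each lies in $[0,1]$, where $a^{2/3}\geq a$ with equality only at $a\in\{0,1\}$, exactly one of them equals $1$ and the rest vanish. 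If some $\nu_{j_0}=1$, then $\nu=\delta_{x_{j_0}}$, whence $\int_{B(x_{j_0},1)}F(\wb_m)\,dx\to1$ (test $\nu_m\rightharpoonup\nu$ against a bump supported in $B(x_{j_0},1)$ equal to $1$ near $x_{j_0}$), contradicting $\sup_y\int_{B(y,1)}F(\wb_m)\,dx=\tfrac12$. Therefore $P(\wb)=1$ and all $\nu_j=0$; then $S\geq K(\wb)\geq S\,P(\wb)^{2/3}=S$, so $K(\wb)=S$ and $\wb\in\D$ is a minimizer for \eqref{S}. Finally, $K(\wb_m)\to S=K(\wb)$ together with $\wb_m\rightharpoonup\wb$ in $\Dz$ and the fact that $K(\cdot)^{1/2}$ is a Hilbert norm equivalent to the $\Dz$-norm give $\wb_m\to\wb$ strongly in $\Dz$, hence in $\D$. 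Relabeling the subsequence as $(m_j)$ and setting $R_j:=R_{m_j}$, $y_j:=-x_{m_j}$, $\vb_j:=\wb_{m_j}$, $\vb:=\wb$, we have $\wb_{m_j}(x)=R_{m_j}^{-2}\ub_{m_j}(R_{m_j}^{-1}(x-y_j))$, which is the statement.

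\emph{Main obstacle.} The heart of the argument is the exclusion of dichotomy: one must arrange the nested radii so that the two gradient contributions produced by Corollary~\ref{locGSI} are carried by disjoint regions, allowing the superadditivity gap $\lambda^{2/3}+(1-\lambda)^{2/3}>1$ to be played against $K(\vb_m)\to S$. The second delicate point is the elimination of the single--atom case in the limit, which is exactly what the normalization of the concentration function at radius $1$ is designed to prevent; along the way one must also keep careful track of the difference between vague and weak convergence of measures and of the role of uniform tightness in preserving total mass.
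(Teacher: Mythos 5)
Your proof is correct and follows essentially the same concentration--compactness strategy as the paper: normalizing the concentration function at level $\tfrac12$ by dilation, excluding vanishing and dichotomy via Corollary \ref{locGSI}, invoking Lemma \ref{thm:CCLII} for the limit measures, ruling out a single atom using the normalization, and upgrading weak to strong convergence from $K(\wb_m)\to K(\wb)$. The only differences are cosmetic (you translate by the compactness centers after the fact instead of re-centering the concentration function at the origin during normalization, use two intermediate radii instead of one in the dichotomy step, and test the atomic case against a bump function rather than a Portmanteau-type theorem), none of which changes the substance.
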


  \begin{proof}
  Let $(\ub_{m})\subset \D$ be any minimizing sequence of \eqref{S}  with $\ub_m\geq0$, that is,
  \begin{equation}\label{uminseqS}
      \lim_{m\to \infty}K(\ub_{m})=S \qquad\mathrm{and}\qquad P(\ub_{m})=\int F(\ub_{m})\;dx=1.\\
  \end{equation}
  
   \noindent {\bf Claim 1.} 
There are sequences $(R_{m})\subset \R$ and $(y_{m})\subset \R^{6}$  such that  \begin{equation}\label{vmscalng6}
 \vb_{m}(x):=
 R_{m}^{-2}\ub_{m}\left(R_{m}^{-1}(x-y_{m})\right), 
 \end{equation}
 satisfies
 \begin{equation}\label{normliz}
 \sup_{y\in \R^{6}}\int_{B(y,1)}F(\vb_m(x))\; dx=   \int_{B(0,1)}F(\vb_{m})\; dx=\frac{1}{2}. 
 \end{equation}
 
 To prove this let us consider the following scaling
 \begin{equation*}
 \vb_{m}^{R,w}(x):=
 R^{-2}\ub_{m}\left(R^{-1}(x-w)\right), \qquad R>0, \quad w\in \R^{6}.
 \end{equation*}
 From Remark \ref{KPinvarice6} we have 
 $K(\vb_{m}^{R,w})=K(\ub_{m})$ and $P(\vb_{m}^{R,w})=P(\ub_{m})=1$. Denote by $Q_{m}^{R,w}(t)$ the concentration function corresponding to $F(\vb_{m})$, i.e., 
 \begin{equation*}
    Q_{m}^{R,w}(t):= \sup_{y\in \R^{6}}\int_{B(y,t)}F(\vb_{m}^{R,w}(x))\; dx. 
 \end{equation*}
 A change of variable gives that $Q_{m}(t/R)=Q_{m}^{R,w}(t)$, for all $t\geq 0$ and $w\in \R^{6}$, where, as in Lemma \ref{QmRattain},
 $$
 Q_m(t)=\sup_{y\in\R^{6}}\int _{B(y,t)} F(\ub_{m})\;dx.
 $$
 In particular, $Q_{m}(1/R)=Q_{m}^{R,w}(1)$ for all $m$.  Since for each $m$, $Q_{m}$ is a non-decreasing function with $Q_{m}(0)=0$ and $\lim_{t\to \infty}Q_{m}(t)=1$ we have
 \begin{equation*}
     \lim_{R\to 0^{+}}Q_{m}^{R,w}(1)=\lim_{R\to 0^{+}}Q_{m}\left(1/R\right)=1. 
 \end{equation*}
 Hence, for each $m$  we may  choose a number $R_{m}>0$ such that
 \begin{equation}\label{Qmt}
  Q_{m}^{R_{m},w}(1)=Q_{m}\left(1/R_{m}\right)=\frac{1}{2}, \qquad \mbox{for any}\; w\in \R^{6},   
 \end{equation}
that is,
 \begin{equation}\label{supFv}
      \sup_{y\in \R^{6}}\int_{B(y,1)}F(\vb_{m}^{R_m,w}(x))\; dx=Q_{m}^{R_{m},w}(1)=\frac{1}{2}, \qquad\mbox{for any}\; w\in \R^{6}.
 \end{equation}
 
 On the other hand, since $\int F(\vb_m^{R_m,w})=1$ and $\vb_m^{R_m,w}\geq0$,
  Lemma \ref{QmRattain} implies that there is $y_{m}\in \R^{6}$ such that
 \begin{equation*}
 \begin{split}
   \sup_{y\in \R^{6}}\int_{B(y,1)}F\left(R_{m}^{-2}\ub_{m}\left(R_{m}^{-1}(x-w)\right)\right)\;dx  &= \sup_{y\in \R^{6}}\int_{B(y,1)}F(\vb_m^{R_m,w}(x))\; dx\\
     &= \int_{B(y_{m},1)}F(\vb_m^{R_m,w}(x))\; dx\\
      &=\int_{B(0,1)}F\left(R_{m}^{-2}\ub_{m}\left(R_{m}^{-1}(z+y_{m}-w)\right)\right)\;dz,
 \end{split}
 \end{equation*}
where we have used the change of variables $x=z+y_{m}$.  Thus, taking $w=2y_{m}$ in the above equality and using \eqref{supFv} we obtain
\begin{equation*}
    \begin{split}
      \int_{B(0,1)}F\left(R_{m}^{-2}\ub_{m}\left(R_{m}^{-1}(z-y_{m})\right)\right)\;dz  &=\sup_{y\in \R^{6}}\int_{B(y,1)}F\left(R_{m}^{2}\ub_{m}\left(R_{m}^{-1}(x-2y_{m})\right)\right)\;dx\\
      &=Q_{m}^{R_{m},2y_{m}}(1)\\
      &=\frac{1}{2},
    \end{split}
\end{equation*}
which is the second equality in \eqref{normliz}. The first one also follows in view of \eqref{supFv}.\\
 
Next, from Remark \ref{KPinvarice6} and Claim 1 we have that  $(\vb_{m})$ is a minimizing sequence for \eqref{S} with $\vb_m\geq0$, which means that
 \begin{equation}\label{vminseqS}
      \lim_{m\to \infty}K(\vb_{m})=S \qquad\mathrm{and}\qquad P(\vb_{m})=\int F(\vb_{m})\;dx=1, \quad \mbox{for all} \;m\in\N. 
  \end{equation}
 In particular, $(\vb_{m})$ is uniformly bounded in $\D$. Then, there exist a subsequence, still denoted by $(\vb_{m})$, and $\vb\in \Dz $ such that
 \begin{equation}\label{weakconvm}
  \vb_{m} \rightharpoonup \vb, \quad \mathrm{ in }\quad \Dz.   
 \end{equation}
 It follows from Remark \ref{umnoneg} that $\vb\geq \mathbf{0}$.  
 
 Define the sequences of measures $(\mu_{m})$ and $(\nu_{m})$ by
 \begin{equation}\label{defmunum}
   \mu_{m}=\sum_{k=1}^{l}\gamma_{k}|\nabla v_{km}|^{2}\;dx,\qquad\mathrm{and}\qquad\nu_{m}=F(\vb_{m})\;dx.   
 \end{equation}
 
 From \eqref{vminseqS} we have that $(\nu_{m})$ is a probability sequence of measures. Then by Lemma \ref{CCLI} we know that, up to a subsequence,  one of the three cases occur:  \textit{vanishing}, \textit{dichotomy}, or \textit{compactness}. We will show that neither vanishing nor dichotomy occur. \\

   \noindent {\bf Claim 2.} Vanishing does not occur.
 
This follows immediately from \eqref{normliz} because
 \begin{equation*}
     \lim_{m\to\infty}\sup_{y\in \R^{6}}\nu_{m}(B(y,1))\geq\frac{1}{2}.\\
 \end{equation*}

 \noindent {\bf Claim 3.} Dichotomy does not occur.
 
In fact, suppose by contradiction that  dichotomy occurs. Then,  there is $\lambda\in (0,1)$ such that for any $\epsilon>0$ there exist a number $R>0$ and a sequence $(x_{m})$ with the property: given $R'>R$ and $m$ sufficiently large,
      \begin{equation}\label{numdicho}
          \nu_{m}(B(x_{m},R))\geq \lambda-\epsilon, \qquad \nu_{m}(\R^{6}\setminus B(x_{m},R'))\geq 1-\lambda-\epsilon.
      \end{equation}
For  $m$ (large) fixed and a given $\delta>0$, Corollary \ref{locGSI} implies that choosing $\rho$ such that $R<\rho<R'$ with $\frac{\rho}{R'}\leq C(\delta)$ and $\frac{R}{\rho}\leq C(\delta)$ we obtain 
 \begin{equation*}
     \int_{B(x_{m},R)}F(\vb_{m})\;dx\leq S^{-\frac{3}{2}}\left[\sum_{k=1}^{l}\int_{B(x_{m},\rho)}\gamma_{k}|\nabla v_{km}|^{2}\;dx+\delta K(\vb_{m}) \right]^{\frac{3}{2}},
 \end{equation*}
 \begin{equation*}
     \int_{\R^{6}\setminus B(x_{m},R')}F(\vb_{m})\;dx\leq S^{-\frac{3}{2}}\left[\sum_{k=1}^{l}\int_{\R^{6}\setminus B(x_{m},\rho)}\gamma_{k}|\nabla v_{km}|^{2}\;dx+(2\delta+\delta^{2}) K(\vb_{m}) \right]^{\frac{3}{2}},
 \end{equation*}
 These inequalities combined with \eqref{numdicho} lead to
 \begin{equation}\label{ineqSKdelt}
     S\left[(\lambda-\epsilon)^{\frac{2}{3}}+(1-\lambda-\epsilon)^{\frac{2}{3}}\right]\leq K(\vb_{m})+(3\delta+\delta^{2})K(\vb_{m}). 
 \end{equation}
 According to \eqref{vminseqS}, the right-hand side of \eqref{ineqSKdelt} is bounded by $K(\vb_{m})+(3\delta+\delta^{2})M$, for some positive constant $M$ independent of $m$. Thus, as $\epsilon,\delta \to 0$  and $m\to \infty$ we obtain
 \begin{equation}\label{ineqS}
    S\left[\lambda^{\frac{2}{3}}+(1-\lambda)^{\frac{2}{3}}\right]\leq S,
 \end{equation}
 that is, $\lambda^{\frac{2}{3}}+(1-\lambda)^{\frac{2}{3}}\leq1$. This is a contradiction with the fact that   $\lambda^{\frac{2}{3}}+(1-\lambda)^{\frac{2}{3}}>1$ for $\lambda\in (0,1)$. Hence, dichotomy does not occur.\\

As a consequence of Lemma \ref{CCLI} there is a sequence $(x_{m})\subset \R^{6}$, such that for any $\epsilon>0$ there exists a  positive number $R$  with
 \begin{equation}\label{compsnum}
     \nu_{m}(B(x_{m},R))\geq 1-\epsilon,\qquad \mbox{for all}\; m. 
 \end{equation}
 
  \noindent {\bf Claim 4.}  
The sequence $(\nu_{m})$ is  uniformly tight. 

In fact, we start claiming that $B(x_{m},R)\cap B(0,1)\neq \emptyset$, for all $m$.  Suppose the contrary, that is, there exists $m_{0}$ such that $B(x_{m_{0}},R)\cap B(0,1)= \emptyset$.   Taking $0<\epsilon<\frac{1}{2}$ in \eqref{compsnum} we have
 \begin{equation*}
     \int_{B(x_{m_{0}},R)}F(\vb_{m_0})\;dx> \frac{1}{2}. 
 \end{equation*}
 This combined with the normalization condition  \eqref{normliz} lead to
 \begin{equation*}
     \int F(\vb_{m_{0}})\;dx\geq \int_{B(x_{m_{0}},R)}F(\vb_{m_0})\;dx+\int_{B(0,1)} F(\vb_{m_{0}})\;dx>\frac{1}{2}+\frac{1}{2}=1,
 \end{equation*}
 which contradicts  \eqref{vminseqS}. Hence, the claim follows. 
 
 Next, because  $B(x_{m},R)\subset B(0,2R+1)$, for all $m$, \eqref{compsnum} yields
 \begin{equation*}\label{Ball2R}
     \nu_{m}(B(0,2R+1))\geq 1-\epsilon,\qquad \forall m. 
 \end{equation*}
Consequently, since $(\nu_{m})$ is a sequence of probability measures,
 \begin{equation*}
     \nu_{m}\left(\R^{6}\setminus \overline{B(0,2R+1)}\right)=1-\nu_{m}({B(0,2R+1)})\leq \epsilon, \quad \mbox{for all}\; m,  
 \end{equation*}
 that is, $(\nu_{m})$ is a uniformly tight sequence, as claimed.\\

  \noindent {\bf Claim 5.}  Up to a subsequence, $(\nu_{m})$ converges weakly to some $\nu\in\mathcal{M}_{+}^{1}(\R^{6})$.
 
Indeed, first note that for any $f\in \mathcal{C}_{c}(\R^{6})$, 
 \begin{equation*}
     \left|\int f\; d\nu_{m}\right|\leq \|f\|_{L^{\infty}}\nu_{m}\left(\R^{6}\right)=\|f\|_{L^{\infty}}<\infty. 
 \end{equation*}
 Thus, from Lemma \ref{vagbndconv},  there is $\nu \in \mathcal{M}_{+}^b(\R^{6}) $ such that, up to a subsequence, $\nu_{m}\overset{\ast}{\rightharpoonup} \nu$ in  $\mathcal{M}_{+}^b(\R^{6})$.  The uniform tightness of $(\nu_m)$ then implies that $\nu_m\rightharpoonup\nu$ weakly in $\mathcal{M}_{+}^b(\R^{6})$ (see for instance \cite[Theorem 30.8]{bauer2011measure}), i.e.,
 \begin{equation}\label{weakconvnum}
    \int f\;d\nu_{m} \to \int f\;d\nu ,\quad \mbox{for any}\quad f\in\mathcal{C}_b(\R^6).
 \end{equation}
In particular,   by taking $f\equiv1$, we obtain
\begin{equation}\label{nu=1}
    \nu\left(\R^{6}\right)=\lim_{m\to \infty}\nu_m(\R^6)=1,
\end{equation}
from which we deduce that $\nu\in\mathcal{M}_{+}^1(\R^{6})$.\\

Next, because $(K(\vb_{m}))$ is uniformly bounded it follows that $(\mu_{m})$ is also  vaguely bounded. Then, up to a subsequence, there exists $\mu\in \mathcal{M}_{+}^{b}(\R^{6})$ such that
 \begin{equation}\label{vagueconvmum}
  \mu_{m}\overset{\ast}{\rightharpoonup} \mu \quad \mathrm{in}\quad  \mathcal{M}_{+}^{b}(\R^{6}). 
 \end{equation}
 In particular we have $\mu(\R^6)\leq \liminf_{m\to \infty}\mu_m(\R^6)$.
 
 Now, \eqref{weakconvm}, \eqref{weakconvnum} and \eqref{vagueconvmum} allow us to invoke Lemma \ref{thm:CCLII} to obtain
 \begin{equation}\label{descomunu}
          \mu\geq \sum_{k=1}^{l}\gamma_{k}|\nabla v_{k}|^{2}\;dx+\sum_{j\in J}\mu_{j}\delta_{x_{j}}\quad\mathrm{and} \quad \nu=F(\vb)\; dx+\sum_{j\in J}\nu_{j}\delta_{x_{j}},
      \end{equation}
      for some family  $\{x_{j}\in \R^{6}:j\in J\}$, $J$ countable, and  $\mu_{j},\nu_{j}$ non-negative numbers satisfying
      \begin{equation}\label{Sobvjnuj}
          \nu_{j}\leq S^{-\frac{3}{2}}\mu_{j}^{\frac{3}{2}}, \qquad \mbox{for any}\; j \in J.
      \end{equation}
     with $\sum_{j\in J}\nu_j^{\frac{2}{3}}$ convergent.
Consequently,  \eqref{GCSIS}, \eqref{nu=1} and \eqref{Sobvjnuj} give
\begin{equation}\label{Sineq}
\begin{split}
        S=\liminf_{m\to \infty}\mu_{m}(\R^{6})&\geq \mu\left(\R^{6}\right)\\
        &\geq K(\vb)+\sum_{j\in J}\mu_{j}\\
        &\geq S\left[P(\vb)^{\frac{2}{3}}+\sum_{j\in J}\nu_{j}^{\frac{2}{3}}\right]\\
        &>S\left[P(\vb)+\sum_{j\in J}\nu_{j}\right]^{\frac{2}{3}}\\
        &=S\left[\nu\left(\R^{6}\right)\right]^{\frac{2}{3}}\\
        &=S,
\end{split}
\end{equation}
where we also have used that $\lambda \mapsto \lambda^{2/3}$ is a strictly concave function. Thus, all inequalities in \eqref{Sineq} are indeed equalities. But by the strictly concavity of the function $\lambda \to \lambda^{2/3}$, for \eqref{Sineq} to be an equality at most one of the terms $P(\vb)$ or $\nu_{j}$, $j\in J$, must be different from zero.\\

  \noindent {\bf Claim 6.} We claim  that $\nu_{j}=0$ for all $j\in J$. 
  
Otherwise, assume   $\nu_{j_{0}}\neq 0$ for some $j_{0}\in J$. Then from the above discussion, \eqref{nu=1} and the decomposition \eqref{descomunu} we obtain  $\nu=\nu_{j_0}\delta_{x_{j_{0}}}$, and then
\begin{equation}\label{nuj0}
    1=\nu(\R^6)=\nu_{j_{0}}. 
\end{equation}
The normalization condition \eqref{normliz} gives
\begin{equation*}
    \frac{1}{2}\geq \int_{B(x_{j_{0}},1)}F(\vb_{m})\;dx=\nu_{m}(B(x_{j_{0}},1)), \qquad \mbox{for all}\; m,
\end{equation*}
which leads to
\begin{equation*}
   \frac{1}{2}\geq\lim_{m\to \infty}\nu_{m}(B(x_{j_{0}},1))= \nu(B(x_{j_{0}},1))=\int_{B(x_{j_{0}},1)}d\nu=\nu_{j_{0}} ,
\end{equation*}
where the first equality is a consequence of   the weak convergence \eqref{weakconvnum}  (see, for instance, \cite[Theorem 30.12]{bauer2011measure}). But, this   contradicts  \eqref{nuj0}  and the claim is proved.\\

 Therefore, it must be the case that  $\nu=F(\vb)\;dx$ and from  \eqref{nu=1}
\begin{equation}\label{intFv=1}
   P(\vb)= \int F(\vb)\;dx=1,
\end{equation}
which means that $\vb\in \D$. 

It remains to prove that $K(\vb)=S$. From \eqref{intFv=1} and the definition of $S$ we know that $S\leq K(\vb)$. On the other hand, the lower semi-continuity of the weak convergence \eqref{weakconvm} gives $K(\vb)\leq \liminf_{m\to \infty}K(\vb_{m})=S$.  Hence, we conclude that $ K(\vb)=S=\lim_{m\to \infty}K(\vb_{m})$ and also that $\vb_{m}\to \vb$ strongly in $\D$. This finishes the proof. 
  \end{proof}
 
 Note that actually we  have proved the following:
 \begin{coro}\label{eleminiS}
 There exists $\vb\in \D $ satisfying $P(\vb)=1$ and $K(\vb)=C_{6}^{-\frac{2}{3}}$, where $C_{6}$ is the  best constant in the general critical Sobolev-type inequality \eqref{GCSI6}. 
 \end{coro}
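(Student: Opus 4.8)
The plan is to read this off directly from Theorem \ref{teocomp}, together with the homogeneity relation between $S$ and $C_6$ recorded in Remark \ref{KPinvarice6}. First I would exhibit a \emph{nonnegative} minimizing sequence for the normalized problem \eqref{S}. By the very definition of $S$ as an infimum there is a sequence $(\ub_m)\subset\D$ with $P(\ub_m)=1$ and $K(\ub_m)\to S$. Replacing $\ub_m$ by $\big\bracevert\!\!\ub_m\!\!\big\bracevert$ and using $K(\big\bracevert\!\!\ub_m\!\!\big\bracevert)\le K(\ub_m)$ together with assumption \textnormal{\ref{H6}} (so that $P(\big\bracevert\!\!\ub_m\!\!\big\bracevert)\ge P(\ub_m)$), and then rescaling via \eqref{scaling6} to restore $P=1$, one still obtains a minimizing sequence for \eqref{S}, now with nonnegative components; this is exactly the reduction already noted in the text preceding Remark \ref{KPinvarice6}.

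Next I apply Theorem \ref{teocomp} to this sequence. It provides a subsequence $(\ub_{m_j})$ and sequences $(R_j)\subset\R$, $(y_j)\subset\R^6$ such that $\vb_j(x):=R_j^{-2}\ub_{m_j}\big(R_j^{-1}(x-y_j)\big)$ converges strongly in $\D$ to some $\vb$, and $\vb$ is a minimizer for \eqref{S}. In particular $\vb\in\D$, $P(\vb)=1$, and $K(\vb)=S$. Finally, by Remark \ref{KPinvarice6}(i) the homogeneity of $K$ and $P$ (of degrees $2$ and $3$, respectively) gives $C_6=S^{-3/2}$, i.e. $S=C_6^{-2/3}$; hence $K(\vb)=C_6^{-2/3}$, which is the assertion.

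There is no real obstacle at this stage: all the analytic work --- the normalization \eqref{normliz} ruling out vanishing, the localized Sobolev inequality of Corollary \ref{locGSI} ruling out dichotomy, and the passage from weak to strong convergence in the compact case --- has already been carried out in the proof of Theorem \ref{teocomp}. The corollary is simply a matter of recording that the minimizer produced there satisfies $K(\vb)=S$ and of converting $S$ into the optimal constant $C_6$ of \eqref{GCSI6} via the scaling identity $C_6=S^{-3/2}$.
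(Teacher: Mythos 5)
Your route is the paper's route: Theorem \ref{teocomp} already produces $\vb\in\D$ with $P(\vb)=1$ and $K(\vb)=S$, and Remark \ref{KPinvarice6}(i) gives $C_6=S^{-3/2}$, i.e.\ $S=C_6^{-2/3}$; the corollary is just a restatement. The one slip is in your reduction to a nonnegative minimizing sequence: after replacing $\ub_m$ by $\big\bracevert\!\!\ub_m\!\!\big\bracevert$ you say you ``rescale via \eqref{scaling6} to restore $P=1$,'' but \eqref{scaling6} leaves \emph{both} $K$ and $P$ invariant (as recorded in Remark \ref{KPinvarice6}(ii)), so it cannot change $P$. What you need instead is the scalar rescaling $\ub\mapsto\lambda\ub$ together with the degree-$2$ and degree-$3$ homogeneity of $K$ and $P$: take $\lambda_m=P(\big\bracevert\!\!\ub_m\!\!\big\bracevert)^{-1/3}\le 1$, so $P(\lambda_m\big\bracevert\!\!\ub_m\!\!\big\bracevert)=1$ and $K(\lambda_m\big\bracevert\!\!\ub_m\!\!\big\bracevert)=\lambda_m^2 K(\big\bracevert\!\!\ub_m\!\!\big\bracevert)\le K(\ub_m)\to S$, which (since $K\ge S$ on the constraint set) gives a nonnegative minimizing sequence for \eqref{S}. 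With that correction the argument is fine and matches the paper.
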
 
 
 Finally we are now in a position   to prove the existence of ground state solutions for  \eqref{syselip6}.

\begin{proof}[Proof of Theorem \ref{thm:GESn6}]
Let $\vb$ be the minimizer of \eqref{S} obtained in Theorem \ref{teocomp}.	From the Lagrange multiplier theorem,  there exists a constant $\Lambda$ such that
 \begin{equation}\label{Lagrange}
     2\gamma_{k} \int \nabla v_{k} \cdot \nabla g_{k}\;dx=\Lambda\int f_{k}(\vb)g_{k}\;dx,
 \end{equation}
 for any $\mathbf{g}\in \dot{\mathbf{H}}^{1}(\R^{6})$. 
By taking  $\mathbf{g}=\vb$ in \eqref{Lagrange} we promptly see that $\Lambda\neq0$. Now,
 define $\psib_{0}(x):=\frac{\Lambda}{2}\vb(x)$. By the above discussion $\psib_{0}$ is non-trivial. We will see that  $\psib_{0}$ is a ground state solution for \eqref{syselip6}. First of all, note    that $\boldsymbol{\psi}_{0}$ is a solution \eqref{syselip6}. Indeed, from \eqref{Lagrange} we have,  for any $\mathbf{g}\in \dot{\mathbf{H}}^{1}$,
 \begin{equation*}
    \begin{split}
        \gamma_{k} \int \nabla \psi_{0k} \cdot \nabla g_{k}\;dx&=\frac{\Lambda}{2} \gamma_{k} \int \nabla v_{k} \cdot \nabla g_{k}\;dx=\int \left(\frac{\Lambda}{2}\right)^{2}f_{k}(\vb)g_{k}\;dx=\int f_{k}(\psib_{0})g_{k}\;dx,
    \end{split} 
 \end{equation*}
 where we have used \eqref{fkhomog2} in the last equality.   
 
 Next, since  $\psib_{0}$ is a solution, from Remark  \ref{relatEIJ} it follows that $J(\psib_{0})=\frac{6^{\frac{3}{2}}}{2}I(\psib_{0})^{\frac{1}{2}}$.
 On the other hand, according to Remark \ref{KPinvarice6} (i), $\vb$ is a minimizer of $J$ and since $J(\psib_{0})=J(\vb)$, so is  $\psib_{0}$. Consequently, $\psib_{0}$ is a ground state, as we required.
\end{proof}  

\begin{obs}
 We actually know the exact value of the Lagrange multiplier $\Lambda$. Indeed, since $\psib_{0}$ is a solution  of \eqref{syselip6}, from \eqref{KPrel} we have
 \begin{equation*}
     \left(\frac{\Lambda}{2}\right)^{2}K(\vb)=K(\psib_{0})=3P(\psib_{0})=3\left(\frac{\Lambda}{2}\right)^{3}P(\vb).
 \end{equation*}
 Hence, recalling that $K(\vb)=C_{6}^{-\frac{2}{3}}$ and $P(\vb)=1$ we deduce that 
  $\Lambda=\frac{2}{3}C_{6}^{-\frac{2}{3}}$. 
\end{obs}

\begin{coro}
  The inequality
  \begin{equation}\label{GNItype}
      P(\ub)\leq C_{6}^{opt}K(\ub)^{\frac{3}{2}},
  \end{equation}
  holds, for all $\ub \in 
 \D$, with the optimal constant $C_6^{opt}$ given by
  \begin{equation}\label{bestconstn6}
      C_{6}^{opt}=\frac{1}{3^{\frac{3}{2}}}\frac{1}{\E(\psib)^{\frac{1}{2}}},
  \end{equation}
  where $\psib$ is any ground state solution of \eqref{syselip6}. 
\end{coro}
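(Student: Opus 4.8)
The plan is to identify the optimal constant $C_6^{opt}$ in \eqref{GNItype} with the constant $C_6$ defined in \eqref{C6}, and then to rewrite $C_6$ in terms of the ground-state level $\E(\psib)$. The inequality \eqref{GNItype} with constant $C_6$, as well as the impossibility of any smaller constant, is already in hand: \eqref{GCSIS} together with $C_6=S^{-3/2}$ (Remark \ref{KPinvarice6}(i)) gives $P(\ub)\leq C_6K(\ub)^{3/2}$ for every $\ub\in\D$, while by definition \eqref{C6} one has $C_6^{-1}=\inf\{J(\ub):\ub\in\D\}$, hence $\sup_{\ub\in\D} P(\ub)/K(\ub)^{3/2}=\sup_{\ub\in\D}1/J(\ub)=C_6$, and Corollary \ref{eleminiS} shows that this extremal value is attained. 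Therefore $C_6^{opt}=C_6$, and it only remains to establish formula \eqref{bestconstn6}.

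First I would use the explicit relation between the minimizer $\vb$ of \eqref{S} and the ground state $\psib_0$ produced in the proof of Theorem \ref{thm:GESn6}: recall that $\psib_0=\frac{\Lambda}{2}\vb$ with $\Lambda=\frac{2}{3}C_6^{-2/3}$, and that $K(\vb)=C_6^{-2/3}$, $P(\vb)=1$ (Corollary \ref{eleminiS} and the remark following Theorem \ref{thm:GESn6}). Since $K$ and $P$ are homogeneous of degrees $2$ and $3$ respectively,
\[
K(\psib_0)=\left(\frac{\Lambda}{2}\right)^{2}K(\vb)=\frac{1}{9}\,C_6^{-2},\qquad
P(\psib_0)=\left(\frac{\Lambda}{2}\right)^{3}P(\vb)=\frac{1}{27}\,C_6^{-2}.
\]
Because $\psib_0$ solves \eqref{syselip6}, relation \eqref{KPrel} gives $K(\psib_0)=3P(\psib_0)$, so by the definition \eqref{Efunc} of $\E$,
\[
\E(\psib_0)=K(\psib_0)-2P(\psib_0)=P(\psib_0)=\frac{1}{27}\,C_6^{-2}.
\]
Solving for $C_6$ then gives $C_6^{-2}=27\,\E(\psib_0)$, that is, $C_6=(27\,\E(\psib_0))^{-1/2}=3^{-3/2}\E(\psib_0)^{-1/2}$, which is exactly \eqref{bestconstn6}.

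Finally I would check that \eqref{bestconstn6} does not depend on the particular ground state chosen: by Definition \ref{defgroundstate} every $\psib\in\mathcal{G}_6$ attains the same value $\inf\{I(\phib):\phib\in\mathcal{C}\}$, and by Remark \ref{relatEIJ} we have $\E(\psib)=2I(\psib)$ on nontrivial solutions, so $\E$ is constant on $\mathcal{G}_6$ and the formula is unambiguous. I do not expect a genuine obstacle here: the substantive work — existence of the minimizer and optimality of $C_6$ — was already carried out in Theorem \ref{teocomp} and Corollary \ref{eleminiS}, so what remains is only careful bookkeeping of the homogeneity exponents and of the value of $\Lambda$.
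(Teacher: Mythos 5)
Your proposal is correct, and the overall reduction is the same as the paper's: identify $C_6^{opt}$ with $C_6=(\inf_\D J)^{-1}$ (which Remark \ref{KPinvarice6} and Corollary \ref{eleminiS} already give, including attainment), and then express that constant through a ground state. The only difference is in how the numerical coefficient is extracted. The paper reads it off directly from Remark \ref{relatEIJ}: for any ground state $\psib$, $C_6^{-1}=J(\psib)=\tfrac{6^{3/2}}{2}I(\psib)^{1/2}=3^{3/2}\E(\psib)^{1/2}$, so the formula is manifestly independent of the choice of $\psib\in\mathcal{G}_6$. You instead compute $K(\psib_0)$ and $P(\psib_0)$ for the particular ground state $\psib_0=\tfrac{\Lambda}{2}\vb$, using the explicit value $\Lambda=\tfrac{2}{3}C_6^{-2/3}$ and the normalization $K(\vb)=C_6^{-2/3}$, $P(\vb)=1$, and then add a short (correct) argument that $\E$ is constant on $\mathcal{G}_6$ via $\E(\psib)=2I(\psib)$ and the definition of ground state. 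Both routes rely on the same ingredients; yours is a bit more computational but equally valid, and your identification $C_6^{opt}=C_6$ is stated more cleanly than the paper's (which has an evident typo writing $C_6^{-1}=C_6^{opt}$).
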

\begin{proof}
In Remark \ref{KPinvarice6} we saw that \eqref{GNItype} holds with $C_6^{-1}=C_6^{opt}=\inf \{J(u); \,u\in \D\}$. Now if $\psib$ is any ground state of \eqref{syselip6}, Remark  \ref{relatEIJ} implies that
$$
C_6^{-1}=J(\psib)=\frac{6^{\frac{3}{2}}}{2}I(\psib)^{\frac{1}{2}}=3^{\frac{3}{2}}\E(\psib)^{\frac{1}{2}},
$$  
which is the desired.
\end{proof}

\begin{obs}
Note that all ground states of \eqref{syselip6} have the same energy. Therefore, the constant $C_6$ does not depend on the choice of the ground state.
\end{obs}

 \section{Blow-up results}\label{sec.blowupn56}
 This section aims to show the existence of blows-up solutions of  \eqref{system1}. To give the precise statement of our result we set
 \begin{equation*}
 \mathcal{G}:= 
 \begin{cases}
 \mathcal{G}_5(1,\boldsymbol{0}), \qquad if\quad n=5\\
 \mathcal{G}_6, \qquad if\quad n=6.
 \end{cases}
 \end{equation*}
 
 Thus the main result of this section is the following.

 \begin{teore}\label{thm:Blowupn6}
 	Assume that $\ub_{0}\in \mathbf{H}^1(\R^n)$ and let $\ub$ be the corresponding solution of \eqref{system1} defined in  the maximal time interval of existence, say  $I$. 
 	\begin{itemize}
 		\item[(i)] If $n=5 $ assume
 		\begin{equation}\label{desEgs1n=5}
 		Q(\mathbf{u}_{0})E(\mathbf{u}_{0})<Q(\boldsymbol{\psi})\E(\boldsymbol{\psi}),
 		\end{equation}
 		and 
 		\begin{equation}\label{desKgs1n=5}
 		Q(\mathbf{u}_{0})K(\mathbf{u}_{0})>Q(\boldsymbol{\psi})K(\boldsymbol{\psi}).
 		\end{equation}
 		\item[(ii)] If $n=6$ assume 
 		\begin{equation}\label{desEgs1n=6}
 		E(\mathbf{u}_{0})<\E(\boldsymbol{\psi})
 		\end{equation}
 		and
 		\begin{equation}\label{desKgs1n=6}
 		K(\mathbf{u}_{0})>K(\boldsymbol{\psi}).
 		\end{equation}
 	\end{itemize}
 	where $\mathcal{E}$ is the energy defined in \eqref{Efunc}
 	and  $\psib\in\mathcal{G} $. 
 	
 	Then, if $\ub_{0}$ is radially symmetric we have that $I$ is finite. 
 \end{teore}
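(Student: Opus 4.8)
The plan follows, in the general setting of \eqref{system1}, the strategy of \cite{inui2018blow}: combine a variational (invariant-region) analysis with a localized virial estimate. The localization is what makes the argument available for $\mathbf{H}^1$-data (rather than the $\Sigma$-data underlying \eqref{V2mwmr}), and, together with radial symmetry, it is what lets us dispose of the term in \eqref{V2mwmr} that records the absence of mass-resonance.

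\emph{Step 1 (invariant region).} Using conservation of $Q$ and $E$, Lemma~\ref{estdifF}, assumption \ref{H6}, and the sharp inequality \eqref{GNI} (when $n=5$) or \eqref{GNItype} (when $n=6$), one shows that hypotheses \eqref{desEgs1n=5}--\eqref{desKgs1n=5} (resp.\ \eqref{desEgs1n=6}--\eqref{desKgs1n=6}) propagate with a gap: there is $\delta_0>0$ such that, for all $t\in I$, $Q(\ub_0)K(\ub(t))\ge(1+\delta_0)Q(\psib)K(\psib)$ when $n=5$, resp.\ $K(\ub(t))\ge(1+\delta_0)K(\psib)$ when $n=6$. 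The mechanism is the familiar one of Holmer--Roudenko type: substituting \eqref{GNI}/\eqref{GNItype} into $E(\ub)\ge K(\ub)-2P(\ub)$ (using $\mathcal{Q}(\ub)=Q(\ub)$, since $\omega=1$, $\boldsymbol{\beta}=\mathbf{0}$), and multiplying by $Q(\ub_0)$ when $n=5$, reduces matters to the scalar function $z\mapsto z-2C_5^{opt}z^{5/4}$ (with $z=Q(\ub_0)K(\ub(t))$), resp.\ $z\mapsto z-2C_6^{opt}z^{3/2}$ (with $z=K(\ub(t))$), which, by the identities of Lemma~\ref{identitiesfunctionals} and the value of the optimal constant, attains its maximum exactly at the ground-state value, with value $Q(\psib)\mathcal{E}(\psib)$, resp.\ $\mathcal{E}(\psib)$; the strict energy inequality together with the ``$K$ large'' condition then trap $z$ in the component lying to the right of the maximizer, bounded away from it. Substituting back into \eqref{Energy} and discarding the nonnegative $\beta_k$-terms gives a constant $\delta_2>0$ with
\begin{equation*}
2(4-n)K(\ub(t))+2nE(\ub_0)\le-\delta_2\,K(\ub(t)),\qquad t\in I.
\end{equation*}

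\emph{Step 2 (localized Morawetz quantity).} Let $\phi\in C^\infty(\R^n)$ be a standard radial cutoff with $\phi(x)=|x|^2$ for $|x|\le1$, $\phi$ constant for $|x|\ge2$, and $\phi''\le2$, $\phi'(r)/r\le2$ everywhere; set $\phi_R(x)=R^2\phi(x/R)$ and
\begin{equation*}
M_R(t):=2\sum_{k=1}^l\alpha_k\int\nabla\phi_R(x)\cdot\mathrm{Im}\!\left(\overline{u_k}\,\nabla u_k\right)dx ,
\end{equation*}
a quantity finite and (after a standard regularization, cf.\ \cite{Cazenave}) $C^1$ in $t$ for $\mathbf{H}^1$-solutions. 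Observe that $M_R(t)$ differs from $V_R'(t)$, where $V_R(t)=\sum_k\frac{\alpha_k^2}{\gamma_k}\int\phi_R|u_k|^2\,dx$, precisely by the term $4\int\phi_R\,\mathrm{Im}\sum_k m_k f_k(\ub)\overline{u_k}\,dx$ -- the one obstructing mass-resonance. Consequently, upon differentiating $M_R$ this correction exactly cancels the non-resonant term that would otherwise appear as in \eqref{V2mwmr}, and (the quadratic nonlinearities being reorganized through Lemma~\ref{estdifF}(ii) into $-2\int\Delta\phi_R\,\mathrm{Re}\,F(\ub)$, and using $\phi''\le2$, $\phi'/r\le2$, $\Delta\phi_R=2n$ on $\{|x|\le R\}$, together with $2\,\mathrm{Re}\!\int F(\ub)=K(\ub)+\sum_k\beta_k\|u_k\|_{L^2}^2-E(\ub_0)$) one arrives at
\begin{equation*}
M_R'(t)\le 2(4-n)K(\ub(t))+2nE(\ub_0)-2n\sum_{k=1}^l\beta_k\|u_k(t)\|_{L^2}^2+CR^{-2}\|\ub_0\|_{L^2}^2+C\!\int_{|x|\ge R}|\ub(t)|^3\,dx .
\end{equation*}

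\emph{Step 3 (radial estimate and conclusion).} Radial symmetry enters through the Strauss inequality: with mass conservation,
\begin{equation*}
\int_{|x|\ge R}|\ub(t)|^3\,dx\le C\,R^{-\frac{n-1}{2}}\|\ub_0\|_{L^2}^{5/2}\|\nabla\ub(t)\|_{L^2}^{1/2}\le C(\ub_0)\,R^{-\frac{n-1}{2}}K(\ub(t))^{1/4},
\end{equation*}
and Young's inequality turns this into $\tfrac{\delta_2}{2}K(\ub(t))+C_R$ with $C_R\to0$ as $R\to\infty$. Combining with Step~1 and dropping the nonpositive $\beta_k$-term, for $R$ fixed large enough we get $M_R'(t)\le-\tfrac{\delta_2}{2}K(\ub(t))+C_R\le-c_0<0$ on $I$, and moreover $M_R'(t)\le-\tfrac{\delta_2}{4}K(\ub(t))$ once $K(\ub(t))$ is large. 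Now suppose $\sup I=+\infty$ (the case $\inf I=-\infty$ being symmetric under time reversal). Then $M_R(t)\to-\infty$; since $|M_R(t)|\le2\|\nabla\phi_R\|_{L^\infty}\|\ub_0\|_{L^2}\|\nabla\ub(t)\|_{L^2}\le C(\ub_0)R\,K(\ub(t))^{1/2}$, this forces $K(\ub(t))\to\infty$, so there are $t_1\in I$ and $c_1=c_1(\ub_0)>0$ with $M_R'(t)\le-\tfrac{\delta_2}{4}K(\ub(t))\le-\tfrac{c_1}{R^2}M_R(t)^2$ for all $t\ge t_1$, while $M_R(t_1)<0$. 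The scalar inequality $y'\le-\tfrac{c_1}{R^2}y^2$ with $y(t_1)<0$ drives $y=M_R$ to $-\infty$ in a finite time $\le t_1+\tfrac{R^2}{c_1|M_R(t_1)|}$, contradicting that $M_R(t)$ is finite for every $t\in I$. Hence $\sup I<\infty$, and symmetrically $\inf I>-\infty$, i.e.\ $I$ is finite.

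\emph{Expected main difficulty.} The crucial point is Step~2: because mass-resonance fails, \eqref{V2mwmr} carries a term that cannot merely be estimated, and the remedy is to differentiate the Morawetz action $M_R$ rather than the truncated variance $V_R$, so that this term cancels identically. The remaining delicate point is the error analysis, above all the nonlinear boundary term $\int_{|x|\ge R}|\ub|^3$, which can be absorbed only because radiality (Strauss) supplies the decay $R^{-(n-1)/2}$ and because the conserved mass together with the negative sign of $2(4-n)$ let it be dominated, via Young's inequality, despite the a priori unbounded growth of $K(\ub(t))$. Step~1 is routine given Lemma~\ref{identitiesfunctionals} and the known optimal constants.
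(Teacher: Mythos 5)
Your proposal is correct and follows essentially the same approach as the paper's proof: Step~1 corresponds to Lemma~\ref{lemTneg} (the invariant-region bound $\mathcal{T}_n\le-\delta$ via Lemma~\ref{supercritcalcase} and the sharp constants), Step~2 is exactly the $\mathcal{R}(t)$, $\mathcal{R}'(t)$ computation (the paper likewise differentiates the Morawetz action so that the non-resonant term never appears, rather than estimating it), and Step~3 reproduces the Ogawa--Tsutsumi radial estimate, the bound $|\mathcal{R}(t)|\lesssim R K(\ub)^{1/2}$, and the Riccati-type ODE contradiction, with the only cosmetic difference that the paper runs the ODE comparison on $\eta(t)=\int_{T_1}^t K(\ub(s))\,ds$ while you run it directly on $M_R(t)$.
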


 As we already said, to prove Theorem \ref{thm:Blowupn6} we follow closely the arguments in
   \cite{inui2018blow}.
 Let us start by introducing, for  $\varphi\in \mathcal{C}_{0}^{\infty}(\R^{n})$,
 \begin{equation*}
V(t)=\int \varphi(x)\left(\sum_{k=1}^{l}\frac{\alpha_{k}^{2}}{\gamma_{k}}|u_{k}|^{2}\right)\;dx.
\end{equation*}
Then, the solution $\ub$ of system \eqref{system1} satisfies
\begin{equation}\label{defR}
\begin{split}
    V'(t)&=2\sum_{k=1}^{l}\alpha_{k}\mathrm{Im}\int\nabla \varphi\cdot \nabla u_{k} \overline{u}_{k}\;dx-4\int\varphi(x)\mathrm{Im}\sum_{k=1}^{l}m_{k}f_{k}(\mathbf{u})\overline{u}_{k}\;dx\\
    &=:\mathcal{R}(t)-4\int\varphi(x)\mathrm{Im}\sum_{k=1}^{l}m_{k}f_{k}(\mathbf{u})\overline{u}_{k}\;dx.
\end{split}
\end{equation}

If $\ub_{0}$ is a radially symmetric function, so  is the corresponding  solution $\ub$. Hence, if in addition $\varphi$ is radially symmetric, by a direct calculation (see for instance \cite[Lemma 2.9]{Kavian} or \cite[Theorem 5.5]{NoPa2}) we can rewrite  $\mathcal{R}'$ as
 \begin{equation}\label{Rprim6}
\mathcal{R}'(t)=4\int \varphi''\left(\sum_{k=1}^{l}\gamma_{k}|\nabla u_{k}|^{2}\right)dx-\int\Delta^{2}\varphi\left(\sum_{k=1}^{l}\gamma_{k}|u_{k}|^{2}\right)dx-2\mathrm{Re}\int\Delta\varphi F\left(\mathbf{u}\right)\;dx.
\end{equation}

The approach used in \cite{inui2018blow} to prove the existence of blow-up solutions consists in getting a contradiction by working with $\mathcal{R}$ and $\mathcal{R}'$ instead of $V$ and $V''$.

 We start with two technical lemmas.
 
\begin{lem}\label{teclem}
Assume that $n\geq 1$. Let  $r=|x|$, $x\in \R^{n}$. Define, for a positive constant $c$,
\begin{equation}\label{definitionchi}
 \chi(r)=\left\{\begin{array}{cc}
r^{2},&0\leq r\leq 1,\\
c,& r\geq 3.
\end{array}\right.
\end{equation}
Assume also that $\chi''(r)\leq 2$ and $0\leq \chi'(r)\leq 2r$, for any $ r\geq 0$. Let $\chi_{R}(r)=R^{2}\chi\left(r/R\right)$. Then,
   If $r\leq R$,   
 \begin{equation}\label{laplachi1}
   \Delta\chi_{R}(r)=2n \qquad \mathrm{and}\qquad  \Delta^{2}\chi_{R}(r)=0.
 \end{equation}
 On the other hand, if $r\geq R$, then
 \begin{equation}\label{laplachi2}
 \Delta\chi_{R}(r)\leq C\qquad \mathrm{and}\qquad |\Delta^{2}\chi_{R}(r)|\leq \frac{C}{R^{2}},
 \end{equation}
 where $C$ is a constant independent of $R$.
\end{lem}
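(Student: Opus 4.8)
## Proof Plan for Lemma \ref{teclem}

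The plan is to verify the four stated identities and estimates directly from the definition $\chi_R(r) = R^2 \chi(r/R)$ together with the hypotheses on $\chi$, treating the regions $r \leq R$ and $r \geq R$ separately. Recall that for a radial function $g(r)$ on $\R^n$ one has $\Delta g = g'' + \frac{n-1}{r} g'$. First I would record the scaling relations: writing $s = r/R$, a direct differentiation gives $\chi_R'(r) = R\,\chi'(s)$, $\chi_R''(r) = \chi''(s)$, and more generally $\Delta \chi_R(r) = (\Delta \chi)(s)$ and $\Delta^2 \chi_R(r) = R^{-2}(\Delta^2\chi)(s)$, since the Laplacian of a radial function scales with two derivatives. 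Thus everything reduces to estimating $\Delta\chi$ and $\Delta^2\chi$ on the regions $s \leq 1$ and $s \geq 1$.

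For $r \leq R$ (i.e.\ $s \leq 1$) we are in the region where $\chi(s) = s^2$ by \eqref{definitionchi}. Then $\Delta\chi(s) = \Delta(|x|^2) = 2n$, a constant, so $\Delta\chi_R(r) = 2n$ and $\Delta^2\chi_R(r) = R^{-2}\Delta(2n) = 0$, which is \eqref{laplachi1}. For $r \geq R$ (i.e.\ $s \geq 1$): the estimate on $\Delta\chi$ follows from $\Delta\chi(s) = \chi''(s) + \frac{n-1}{s}\chi'(s)$, using the hypotheses $\chi''(s) \leq 2$ and $0 \leq \chi'(s) \leq 2s$, which give $\Delta\chi(s) \leq 2 + \frac{n-1}{s}\cdot 2s = 2n$; hence $\Delta\chi_R(r) \leq 2n =: C$, independent of $R$. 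For the bound on $\Delta^2\chi_R$, one observes that $\chi$ is a fixed smooth (at least $C^4$ away from the gluing points, or one builds it $C^\infty$) function that is constant for $s \geq 3$ and equals $s^2$ for $s \leq 1$, so $\Delta^2\chi$ is a fixed bounded function supported in $1 \leq s \leq 3$; let $C$ majorize $\sup|\Delta^2\chi|$. Then $|\Delta^2\chi_R(r)| = R^{-2}|(\Delta^2\chi)(s)| \leq C R^{-2}$, which is \eqref{laplachi2}.

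The only genuine subtlety — and the point I would be careful to address — is the regularity of $\chi$ at the junctions $r = 1$ and $r = 3$: the statement presents $\chi$ only on $[0,1]$ and $[3,\infty)$ together with the differential inequalities $\chi'' \leq 2$, $0 \leq \chi' \leq 2r$, so one must note that such a $\chi$ can indeed be chosen of class $C^4$ (or $C^\infty$) on all of $[0,\infty)$ with these bounds — this is the standard interpolation between $r^2$ and a constant, and the inequalities $\chi'' \leq 2$, $\chi' \geq 0$ are exactly what is needed for the interpolation to be monotone and sub-quadratic. Once that is granted, all derivatives appearing above are legitimate classical derivatives and the computations are routine; I would simply remark that the construction of $\chi$ is elementary (cf.\ the truncation functions used in \cite{inui2018blow} or \cite{Kavian}) and omit the explicit formula. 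In short, no deep obstacle arises here — the lemma is a bookkeeping statement — but the writeup should make the scaling identities $\Delta\chi_R = (\Delta\chi)(\cdot/R)$ and $\Delta^2\chi_R = R^{-2}(\Delta^2\chi)(\cdot/R)$ explicit, since those are what convert the fixed bounds on $\chi$ into the $R$-uniform bounds \eqref{laplachi2}.
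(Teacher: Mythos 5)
Your proposal is correct and matches the paper's approach: the paper simply asserts that the lemma follows by ``a straightforward computation,'' and your write-up is exactly that computation, carried out via the scaling identities $\Delta\chi_R=(\Delta\chi)(\cdot/R)$ and $\Delta^2\chi_R=R^{-2}(\Delta^2\chi)(\cdot/R)$ together with the radial formula $\Delta g=g''+\frac{n-1}{r}g'$. Your remark about choosing the interpolating piece of $\chi$ with enough regularity (at least $C^4$) is a reasonable and correct clarification of an implicit assumption in the statement.
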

\begin{proof}
The proof is a straightforward computation.
\end{proof}

\begin{lem}\label{supercritcalcase}
Let $I$ be an open interval with $0\in I$. Let $a\in \R$, $b>0$ and $q>1$. Define $\gamma=(bq)^{-\frac{1}{q-1}}$ and $f(r)=a-r+br^{q}$, for $r\geq 0$. Let $G(t)$ a non-negative continuous  function such that $f\circ G\geq 0$ on $I$. Assume that $a<\left(1-\frac{1}{q}\right)\gamma$.
\begin{enumerate}
\item[(i)] If $G(0)<\gamma$, then $G(t)<\gamma$, $\forall t\in I$.
\item[(ii)] If $G(0)>\gamma$, then $G(t)>\gamma$, $\forall t\in I$.
\end{enumerate}
\end{lem}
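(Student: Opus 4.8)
The plan is to reduce everything to a soft connectedness argument, exploiting that $I$, being an interval, is connected and that $G$ is continuous. The key object is the shape of the auxiliary function $f(r)=a-r+br^q$ on $[0,\infty)$.

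First I would analyze $f$. Since $f'(r)=-1+bq\,r^{q-1}$ vanishes exactly at $r=(bq)^{-1/(q-1)}=\gamma$ and is negative for $r<\gamma$, positive for $r>\gamma$ (using $q>1$, so $r\mapsto r^{q-1}$ is increasing), the function $f$ is strictly decreasing on $[0,\gamma]$ and strictly increasing on $[\gamma,\infty)$; hence $r=\gamma$ is its global minimizer on $[0,\infty)$. Using $\gamma^{q-1}=(bq)^{-1}$ one computes $b\gamma^q=b\gamma\cdot\gamma^{q-1}=\gamma/q$, so that $f(\gamma)=a-\bigl(1-\tfrac1q\bigr)\gamma$. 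Thus the standing hypothesis $a<\bigl(1-\tfrac1q\bigr)\gamma$ is precisely the assertion that the minimum value $f(\gamma)$ is strictly negative.

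The second step uses this to disconnect the admissible set. Since $f$ is continuous and $f(\gamma)<0$, there is $\epsilon\in(0,\gamma)$ with $f<0$ on $(\gamma-\epsilon,\gamma+\epsilon)$, so the set $\mathcal{N}:=\{r\geq0:f(r)\geq0\}$ is contained in the disjoint union $[0,\gamma-\epsilon]\cup[\gamma+\epsilon,\infty)$ of two closed sets separated by a gap around $\gamma$. By hypothesis $G(t)\in\mathcal{N}$ for every $t\in I$, so $G(I)$, being the continuous image of the connected set $I$, is a connected subset of $[0,\gamma-\epsilon]\cup[\gamma+\epsilon,\infty)$ and therefore lies entirely in one of the two pieces. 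The value at $t=0$ decides which: if $G(0)<\gamma$ then $G(0)\in\mathcal{N}$ together with $G(0)<\gamma+\epsilon$ forces $G(0)\in[0,\gamma-\epsilon]$, hence $G(I)\subseteq[0,\gamma-\epsilon]\subset[0,\gamma)$, which is (i); if $G(0)>\gamma$ then symmetrically $G(0)\in[\gamma+\epsilon,\infty)$, hence $G(I)\subseteq[\gamma+\epsilon,\infty)\subset(\gamma,\infty)$, which is (ii).

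There is no genuine obstacle in this lemma; it is a purely topological argument. The one point that must not be missed is that the hypothesis $a<\bigl(1-\tfrac1q\bigr)\gamma$ is used in an essential way — exactly to guarantee $f(\gamma)<0$ and hence the separation of $\mathcal{N}$ across $\gamma$ — and that $f\circ G\geq0$ is required on all of $I$, not merely at $t=0$. An equivalent way to finish, avoiding the word ``connected,'' is by contradiction through the intermediate value theorem: if some $G(t_1)$ lay strictly on the opposite side of $\gamma$ from $G(0)$, then $G$ would attain the value $\gamma$ at some $t_*$ between $0$ and $t_1$, and there $f\circ G$ would equal $f(\gamma)<0$, contradicting $f\circ G\geq0$.
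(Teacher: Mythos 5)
Your proof is correct, and it matches the standard argument for this type of ``bootstrap'' lemma. Note that the paper does not actually prove Lemma \ref{supercritcalcase} --- it simply cites external references --- so there is no internal proof to compare against; but the argument you give (show $f$ has its unique minimum at $r=\gamma$, verify $f(\gamma)=a-(1-\tfrac1q)\gamma<0$ under the hypothesis, then conclude by continuity of $G$ and the intermediate value theorem that $G$ cannot cross $\gamma$) is exactly the canonical proof one finds in those sources. Two small points worth keeping in mind, both of which you handled implicitly: the set $\{r\geq 0: f(r)\geq 0\}$ may fail to meet $[0,\gamma)$ at all when $a<0$, in which case part (i) is vacuous (your ``contained in the disjoint union'' phrasing accommodates this without fuss); and the hypothesis $0\in I$ is what makes $G(0)$ an admissible anchor for the connectedness argument. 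The alternative IVT phrasing you give at the end is the most economical way to write it up.
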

\begin{proof}
See, for instance, \cite[Lemma 5.2]{beg}, \cite[Lemma 4.2]{Esfahani} or \cite[Lemma 3.1]{Pastor}.
\end{proof}

\subsection{Proof of Theorem \ref{thm:Blowupn6}}
In this section we will prove Theorem \ref{thm:Blowupn6}. Let us start by introducing the ``Pohozaev'' functional, 
 \begin{equation}\label{funcT}
   \mathcal{T}_{n}(\mathbf{u}(t))=K(\mathbf{u}(t))-\frac{n}{2} P(\mathbf{u}(t)), \qquad n=5,6.   
 \end{equation}
From the definition of the energy functional we may write
\begin{equation}\label{relTE}
   \mathcal{T}_{n}(\mathbf{u}(t))=\frac{n}{4}E(\mathbf{u}(t))-\left(\frac{n-4}{4}\right)K(\mathbf{u}(t))-\frac{n}{4}L(\mathbf{u}(t)).
\end{equation}

Our first result establishes that under the assumptions of Theorem \ref{thm:Blowupn6} the Pohozaev function is strictly negative.

\begin{lem}\label{lemTneg} Under the assumptions of Theorem \ref{thm:Blowupn6},
 there exists $\delta>0$ such that
\begin{equation*}
\mathcal{T}_{n}(\ub(t))\leq -\delta<0,\qquad t\in I.
\end{equation*}
\end{lem}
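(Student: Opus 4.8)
The plan is to follow the scheme of \cite{inui2018blow} (a vectorial ``below the ground state'' argument): combine conservation of charge and energy with the sharp vectorial Gagliardo--Nirenberg/Sobolev inequality to trap $K(\ub(t))$ above the ground-state level, and then read off the sign of $\mathcal{T}_n$ from the identity \eqref{relTE}. First I would note that, since $L(\ub)=\sum_{k=1}^l\beta_k\|u_k\|_{L^2}^2\ge0$ and the energy is conserved, \eqref{relTE} gives, for every $t\in I$,
\[
\mathcal{T}_n(\ub(t))\le \frac{n}{4}E(\ub_0)-\frac{n-4}{4}K(\ub(t)).
\]
Hence it suffices to bound $K(\ub(t))$ from below by $K(\psib)$ when $n=6$, respectively $Q(\ub_0)K(\ub(t))=Q(\ub(t))K(\ub(t))$ from below by $Q(\psib)K(\psib)$ when $n=5$ (using that $Q$ is conserved).

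For the trapping step I would apply Lemma \ref{supercritcalcase}. Using $E(\ub(t))=E(\ub_0)$, $L\ge0$, and the sharp inequality --- \eqref{GNItype} with $C_6^{opt}$ as in \eqref{bestconstn6} when $n=6$, and \eqref{GNI} with $C_5^{opt}$ as in \eqref{bestCn} when $n=5$ (here $\mathcal{Q}=Q$ because $\psib\in\mathcal{G}_5(1,\boldsymbol0)$; both inequalities are trivial when $P(\ub)\le0$, so no a priori restriction to $\mathcal{P}$ or $\D$ is needed) --- one obtains
\[
E(\ub_0)\ge K(\ub(t))-2C_6^{opt}K(\ub(t))^{3/2}\quad(n=6),\qquad
Q(\ub_0)E(\ub_0)\ge z(t)-2C_5^{opt}z(t)^{5/4}\quad(n=5),
\]
where $z(t):=Q(\ub_0)K(\ub(t))$. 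With $G(t)=K(\ub(t))$ (resp.\ $z(t)$), $a=E(\ub_0)$ (resp.\ $Q(\ub_0)E(\ub_0)$), $b=2C_6^{opt}$ (resp.\ $2C_5^{opt}$) and $q=\tfrac32$ (resp.\ $\tfrac54$), this is exactly $f\circ G\ge0$ on $I$ with $f(r)=a-r+br^q$; and $G$ is continuous on $I$ because $\ub\in\mathcal{C}(I;\mathbf{H}^1)$. A short computation with the explicit constants \eqref{bestconstn6}, \eqref{bestCn} together with the ground-state identities (Lemma \ref{identitiesfunctionals} for $n=5$; Remark \ref{relatEIJ} and \eqref{KPrel} for $n=6$) then shows that $\gamma=(bq)^{-1/(q-1)}$ equals $K(\psib)$ (resp.\ $Q(\psib)K(\psib)$) and that $(1-\tfrac1q)\gamma$ equals $\mathcal{E}(\psib)$ (resp.\ $Q(\psib)\mathcal{E}(\psib)$); in particular $K(\psib)=n\,\mathcal{E}(\psib)$. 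Consequently the hypothesis $a<(1-\tfrac1q)\gamma$ of Lemma \ref{supercritcalcase} is precisely \eqref{desEgs1n=6} (resp.\ \eqref{desEgs1n=5}), while $G(0)>\gamma$ is precisely \eqref{desKgs1n=6} (resp.\ \eqref{desKgs1n=5}). Part (ii) of that lemma then yields $K(\ub(t))>K(\psib)$ for all $t\in I$ when $n=6$, and $Q(\ub_0)K(\ub(t))>Q(\psib)K(\psib)$ for all $t\in I$ when $n=5$.

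Feeding this back into the first display finishes the proof. For $n=6$,
\[
\mathcal{T}_6(\ub(t))\le \tfrac32 E(\ub_0)-\tfrac12 K(\ub(t))< \tfrac32 E(\ub_0)-\tfrac12 K(\psib)=\tfrac32\bigl(E(\ub_0)-\mathcal{E}(\psib)\bigr)=:-\delta,
\]
which is negative by \eqref{desEgs1n=6}; and for $n=5$, multiplying by the positive constant $Q(\ub_0)$ and using $Q(\psib)K(\psib)=5\,Q(\psib)\mathcal{E}(\psib)$,
\[
Q(\ub_0)\mathcal{T}_5(\ub(t))\le \tfrac54 Q(\ub_0)E(\ub_0)-\tfrac14 Q(\ub_0)K(\ub(t))< \tfrac54\bigl(Q(\ub_0)E(\ub_0)-Q(\psib)\mathcal{E}(\psib)\bigr),
\]
so that $\mathcal{T}_5(\ub(t))\le-\delta$ with $\delta=\frac{5}{4Q(\ub_0)}\bigl(Q(\psib)\mathcal{E}(\psib)-Q(\ub_0)E(\ub_0)\bigr)>0$ by \eqref{desEgs1n=5}. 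The only delicate point is the constant bookkeeping in the trapping step: making the threshold values $\gamma$ and $(1-\tfrac1q)\gamma$ land exactly on $K(\psib)$ (or $Q(\psib)K(\psib)$) and $\mathcal{E}(\psib)$ (or $Q(\psib)\mathcal{E}(\psib)$). Once that is checked, Lemma \ref{supercritcalcase} and the continuity of $t\mapsto K(\ub(t))$ do the rest, and no concentration-compactness input is needed at this stage.
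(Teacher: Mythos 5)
Your proposal is correct, and its trapping half coincides with the paper's argument: the paper likewise rearranges the conserved energy, invokes \ref{H6} together with the sharp inequalities \eqref{GNI}/\eqref{GNItype}, and applies Lemma \ref{supercritcalcase} with exactly the parameters you describe (up to your harmless renormalization $z(t)=Q(\ub_0)K(\ub(t))$ in place of the paper's choice $G=K$, $b=2C_5^{opt}Q(\ub_0)^{1/4}$) to obtain $Q(\ub_0)K(\ub(t))>Q(\psib)K(\psib)$, resp.\ $K(\ub(t))>K(\psib)$, for all $t\in I$. Where you genuinely diverge is in the conclusion. The paper only infers $\mathcal{T}_n(\ub(t))<0$ from this and then devotes the second half of its proof to the auxiliary claim $\mathcal{T}_n(\ub(t))<-\sigma_0K(\ub(t))$, proved by contradiction with sequences $t_m$ and $\sigma_m\to0$, before combining it with the lower bound $K(\ub(t))>\epsilon_0$ to extract $\delta$. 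You instead substitute the lower bound on $K(\ub(t))$ directly into the linear estimate $\mathcal{T}_n(\ub(t))\le\frac{n}{4}E(\ub_0)-\frac{n-4}{4}K(\ub(t))$ from \eqref{relTE}, which at once produces an explicit uniform negative constant; this is shorter and entirely sufficient for the statement of the lemma (the paper's detour yields the stronger information $\mathcal{T}_n\le-\sigma_0K(\ub)$, which it does not actually use beyond this point). One slip in your prose: the identity ``$K(\psib)=n\,\E(\psib)$'' holds only for $n=5$; the general relation forced by your own computation of $\gamma$ and $(1-\tfrac1q)\gamma$ is $K(\psib)=\tfrac{n}{n-4}\E(\psib)$, i.e.\ $K(\psib)=3\E(\psib)$ when $n=6$ (via \eqref{KPrel} and Remark \ref{relatEIJ}). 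Your displayed estimates use the correct value $\tfrac12K(\psib)=\tfrac32\E(\psib)$, so nothing breaks.
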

\begin{proof} 
 We follow the ideas presented in the proof of Theorem 1.3 in \cite{du2013blow}.  We will give the proof only in the cases $n=5$. The analysis for  $n=6$ follows exactly the same strategy using the results in Section \ref{sec.exigsn=6}.

We first note that by Lemma \ref{identitiesfunctionals} and the definition of the energy functional we obtain
 \begin{equation}\label{identKE}
     K(\boldsymbol{\psi})=5\E(\boldsymbol{\psi}).
 \end{equation}
Since $\psib\in\mathcal{G}_5(1,\boldsymbol{0})$ the functionals $\mathcal{Q}$ in \eqref{functionalQ} and $Q$ are the same. Therefore, from \ref{H6} and \eqref{GNI},
 \begin{equation}\label{estKxi0}
 \begin{split}
 K(\mathbf{u})&= E(\mathbf{u}_{0})-L(\mathbf{u})+2P(\mathbf{u})\leq E(\mathbf{u}_{0})+2\left|P(\mathbf{u})\right|\leq E(\mathbf{u}_{0})+2C_{5 }^{opt}Q(\ub_{0})^{\frac{1}{4}}K(\mathbf{u})^{\frac{5}{4}},
 \end{split}
 \end{equation}
 Now, in the notation of Lemma \ref{supercritcalcase}, if we take $G(t)=K(\mathbf{u}(t))$, $a=E(\mathbf{u}_{0})$, $b=2C_{5}^{opt}Q(\ub_{0})^{\frac{1}{4}}$  and $q=\frac{5}{4}$, then $\gamma=5\frac{Q(\boldsymbol{\psi})^{2}}{Q(\mathbf{u}_{0})}$ and from \eqref{estKxi0} 
 $f\circ G \geq 0$. Moreover, by using \eqref{identKE} a direct calculation gives
\begin{equation*}
    a<\left(1-\frac{1}{q}\right)\gamma \Longleftrightarrow Q(\mathbf{u}_{0})E(\mathbf{u}_{0})<Q(\boldsymbol{\psi})\E(\boldsymbol{\psi}),
\end{equation*}
\begin{equation*}
   G(0)>\gamma \Longleftrightarrow Q(\mathbf{u}_{0})K(\mathbf{u}_{0})>Q(\boldsymbol{\psi})K(\boldsymbol{\psi}).
\end{equation*}
 Hence, an application of Lemma \ref{supercritcalcase} yields
\begin{equation}\label{desK}
Q(\mathbf{u}_{0})K(\mathbf{u}(t))>Q(\boldsymbol{\psi})K(\boldsymbol{\psi}),\qquad t\in I.
\end{equation}
Thus, from \eqref{desEgs1n=5}, \eqref{identKE},  and \eqref{desK} we have
\begin{equation*}
    \begin{split}
      \frac{5}{4}E(\mathbf{u}(t))=\frac{5}{4}E(\mathbf{u}_{0})&<\frac{5}{4} \E (\boldsymbol{\psi})\frac{Q(\boldsymbol{\psi})}{Q(\mathbf{u}_{0})}=\frac{1}{4}K(\boldsymbol{\psi})\frac{Q(\boldsymbol{\psi})}{Q(\mathbf{u}_{0})}<\frac{1}{4}K(\mathbf{u}(t)).
    \end{split}
\end{equation*}
This combined with  \eqref{relTE} yields 
\begin{equation}\label{Tnegat}
    \mathcal{T}_{5}(\mathbf{u}(t))<0,\quad t\in I.  
\end{equation}

We claim that there exists $\sigma_{0}>0$ such that 
\begin{equation}\label{claim}
\mathcal{T}_{5}(\mathbf{u}(t))<-\sigma_{0}K(\mathbf{u}(t)), \qquad t\in I.
\end{equation}
Indeed, if $E(\ub_{0})\leq0$ from \eqref{relTE} we can promptly take $\sigma_{0}=\frac{1}{4}$. Now suppose $E(\ub_0)>0$ and assume by contradiction that \eqref{claim} does not hold. Then we can find sequences $(t_{m})\subset I$ and $(\sigma_{m})\subset\R_+$ with $\sigma_{m}\to 0$ such that
\begin{equation*}
-\sigma_{m}\frac{1}{4}K(\mathbf{u}(t_{m}))\leq \mathcal{T}_{5}(\mathbf{u}(t_{m}))<0.
\end{equation*}
 Thus, the last inequality and \eqref{relTE} gives
\begin{equation*}
\begin{split}
E(\mathbf{u}(t_{m}))&=\frac{4}{5}\mathcal{T}_{5}(\mathbf{u}(t_{m}))+\frac{1}{5}K(\mathbf{u}(t_{m}))+L(\mathbf{u}(t_{m}))\\
&\geq-\sigma_{m}\frac{1}{5}K(\mathbf{u}(t_{m}))+\frac{1}{5}K(\mathbf{u}(t_{m}))+L(\mathbf{u}(t_{m}))\\
&\geq(1-\sigma_{m})\frac{1}{5}K(\mathbf{u}(t_{m})).
\end{split}
\end{equation*}
From this, the conservation of the energy, \eqref{desK} and \eqref{identKE} we get
\begin{equation*}
\begin{split}
Q(\mathbf{u}_{0})E(\mathbf{u}_{0})&=Q(\mathbf{u}_{0})E(\mathbf{u}(t_{m}))\\
&\geq (1-\sigma_{m})\frac{1}{5}Q(\mathbf{u}_{0})
K(\mathbf{u}(t_{m}))\\
&>(1-\sigma_{m})\frac{1}{5}Q(\boldsymbol{\psi})K(\boldsymbol{\psi})\\
&=(1-\sigma_{m})Q(\boldsymbol{\psi})
\E(\boldsymbol{\psi}),
\end{split}
\end{equation*}
Taking $m\to \infty$ in the last inequality we obtain a contradiction with \eqref{desEgs1n=5}, so the claim is proved.

Finally note that \eqref{desK} gives $K(\ub(t))>K(\boldsymbol{\psi})\frac{Q(\boldsymbol{\psi})}{Q(\mathbf{u}_{0})}=:\epsilon_{0}$. Therefore the result follows immediately from \eqref{claim}.
\end{proof}
 
We are now in a position to prove Theorem \ref{thm:Blowupn6}.  
 \begin{proof}[Proof of Theorem \ref{thm:Blowupn6}]  Suppose that the maximal existence interval is  $I=(-T_{*},T^{*})$. We proceed by contradiction.  Without loss of generality assume that $T^{*}=+\infty$.  
 Using $\varphi(x)=\chi_{R}(|x|)$ with $\chi_R$ defined by \eqref{definitionchi}, from \eqref{defR} and \eqref{Rprim6} we can write
  $$
  \mathcal{R}(t)=2 \sum_{k=1}^{l}\alpha_{k}\int \nabla\chi_R \cdot\nabla u_{k}\overline{u}_{k}\;dx
  $$
  and
 \begin{equation*}
 \begin{split}
\mathcal{R}'(t)&=8\mathcal{T}_{n}(\mathbf{u})+4\int \left(\chi_{R}''-2\right)\left(\sum_{k=1}^{l}\gamma_{k}|\nabla u_{k}|^{2}\right)\;dx\\
&\quad-\int\Delta^{2}\chi_{R}\left(\sum_{k=1}^{l}\gamma_{k}|u_{k}|^{2}\right)\;dx-2\mathrm{Re}\int\left(\Delta\chi_{R}-2n\right) F\left(\mathbf{u}\right)\;dx\\
&=:8\mathcal{T}_{n}(\mathbf{u})+\mathcal{R}_{1}(t)+\mathcal{R}_{2}(t)+\mathcal{R}_{3}(t).
\end{split}
\end{equation*}
Here $R$ is seen as a parameter that will be chosen later.

Since from Lemma \ref{teclem} we have $\chi_{R}''(r)\leq 2$ for all $r\geq 0$, it follows that  $\mathcal{R}_{1}\leq 0$.
From \eqref{laplachi1} and the conservation of the charge, 
\begin{equation*}
    \mathcal{R}_{2}(t)\leq \int  |\Delta^{2} \chi_{R}|\left(\sum_{k=1}^{l}\gamma_{k}|u_{k}|^{2}\right)\;dx\leq C \int_{\{|x|\geq R\}}R^{-2}  \left(\sum_{k=1}^{l}\gamma_{k}|u_{k}|^{2}\right)\;dx\leq C R^{-2}Q(\mathbf{u}_{0}).
\end{equation*}
Also, \eqref{laplachi1}  and  Lemma \ref{estdifF} imply
\begin{equation*}
    \begin{split}
    \mathcal{R}_{3}&=-2\mathrm{Re}\int_{\{|x|\geq R\}}\left(\Delta\chi_{R}-2n\right) F\left(\mathbf{u}\right)\;dx \\
    &\leq C \int_{\{|x|\geq R\}}\left| \mathrm{Re}\,F\left(\mathbf{u}\right)\right|\;dx\\
    &\leq C \int_{\{|x|\geq R\}}\sum_{k=1}^{l}\left|u_{k} \right|^{3}\;dx\\
    &=C \sum_{k=1}^{l}\|u_{k}\|^{3}_{L^{3}(|x|\geq R)}.
    \end{split}
\end{equation*}
Recall that for any radial function $f\in H^1(\R^n)$ (see, for instance, \cite[equation (3.7)]{Ogawa})
$$
\|f\|^{3}_{L^{3}(|x|\geq R)}
\leq C  R^{-\frac{(n-1)}{2}}\|f\|^{\frac{5}{2}}_{L^{2}(|x|\geq R)}\|\nabla f\|^{\frac{1}{2}}_{L^{2}(|x|\geq R)}.
$$
 Hence, from Young's inequality  we can write, for any $\epsilon>0$, 
 \begin{equation*}
    \begin{split}
     \sum_{k=1}^{l}\|u_{k}\|^{3}_{L^{3}(|x|\geq R)}
    &\leq C  \sum_{k=1}^{l}R^{-\frac{(n-1)}{2}}\|u_{k}\|^{\frac{5}{2}}_{L^{2}(|x|\geq R)}\|\nabla u_{k}\|^{\frac{1}{2}}_{L^{2}(|x|\geq R)}\\
    &\leq C_{\epsilon}R^{\frac{-2(n-1)}{3}}Q(\mathbf{u}_{0})^{\frac{5}{3}}+2(n-4)\epsilon K(\mathbf{u}),
    \end{split}
\end{equation*}
where $C_\epsilon$ is a positive constant depending on $\epsilon$, $\alpha_{k}$, $\gamma_{k}$, and $\sigma_{k}$.

Gathering together the estimates for $\mathcal{R}_{1},\mathcal{R}_{2}$ and $\mathcal{R}_{3}$ we obtain
 \begin{equation}\label{Rprim1}
     \mathcal{R}'(t)\leq 8\mathcal{T}_{n}(\mathbf{u})+C R^{-2}Q(\mathbf{u}_{0})+2(n-4)\epsilon K(\mathbf{u})+C_{\epsilon}R^{\frac{-2(n-1)}{3}}Q(\mathbf{u}_{0})^{\frac{5}{3}},\qquad \epsilon>0.
 \end{equation}
 Assume  that $0<\epsilon<1$. Using \eqref{relTE} and Lemma \ref{lemTneg} we get 
\begin{equation*}
     \begin{split}
     \mathcal{R}'(t)&\leq 8(1-\epsilon)\mathcal{T}_{n}(\mathbf{u})+ 2n \epsilon |E(\mathbf{u}_{0})|+ C R^{-2}Q(\mathbf{u}_{0})+C_{\epsilon}R^{\frac{-2(n-1)}{3}}Q(\mathbf{u}_{0})^{\frac{5}{3}}\\
     &\leq -8(1-\epsilon)\delta+ 2n \epsilon |E(\mathbf{u}_{0})|+ C R^{-2}Q(\mathbf{u}_{0})+C_{\epsilon}R^{\frac{-2(n-1)}{3}}Q(\mathbf{u}_{0})^{\frac{5}{3}}.
     \end{split}
 \end{equation*}

 Now in the last inequality, we fix $R$ sufficiently large and choose $\epsilon$ sufficiently small such that $ \mathcal{R}'(t)\leq -2\delta$. 
Integrating this inequality on $[0,t)$ we obtain
\begin{equation}\label{Rt}
    \mathcal{R}(t)\leq -2\delta t +\mathcal{R}(0).  
 \end{equation}

On the other hand, from H\"{o}lder's inequality we deduce
\begin{equation}\label{modulR}
    \begin{split}
      |\mathcal{R}(t)|&\leq  2 \sum_{k=1}^{l}\alpha_{k}\int R|\chi'(|x|/R)|| \nabla u_{k}| |u_{k}|\;dx\\
        &\leq CR  \sum_{k=1}^{l}\alpha_{k}\|u_{k}\|_{L^{2}}\|\nabla u_{k}\|_{L^{2}}\\
        &\leq CRQ(\mathbf{u}_{0})^{\frac{1}{2}}K(\mathbf{u})^{\frac{1}{2}}.
    \end{split}
\end{equation}
We now may choose $T_{0}>0$ sufficiently large such that $\frac{\mathcal{R}(0)}{\delta}<T_{0}$. From this and \eqref{Rt},
\begin{equation}\label{Rnegti}
    \mathcal{R}(t)\leq -\delta t<0, \qquad t\geq T_{0}.
\end{equation}
Thus, \eqref{modulR} and \eqref{Rnegti}  lead to $\delta t\leq -\mathcal{R}(t)=|\mathcal{R}(t)|\leq CRQ(\mathbf{u}_{0})^{\frac{1}{2}}K(\mathbf{u})^{\frac{1}{2}}$, or equivalently,
\begin{equation}\label{Kquadrbon}
  K(\mathbf{u}(t))\geq C_0t^{2}, \qquad t\geq T_{0}
\end{equation}
for  some positive constant $C_0$.

Moreover, taking into account that $\epsilon$ is sufficiently small (less than $1/2$ is enough)  by \eqref{Rprim1} and \eqref{relTE} we get
\begin{equation}\label{Rprim3}
     \begin{split}
     \mathcal{R}'(t)&\leq 2nE(\mathbf{u})-2(n-4) K(\mathbf{u})+C R^{-2}Q(\mathbf{u}_{0})+(n-4) K(\mathbf{u})+CR^{\frac{-2(n-1)}{3}}Q(\mathbf{u}_{0})^{\frac{5}{3}}\\
 &\leq -(n-4) K(\mathbf{u})+ 2nE(\mathbf{u}_{0})+C R^{-2}Q(\mathbf{u}_{0})+CR^{\frac{-2(n-1)}{3}}Q(\mathbf{u}_{0})^{\frac{5}{3}},
     \end{split}
 \end{equation}
 where we have used  the conservation of the energy and the fact that $L(\ub)\geq 0$. We note that the last three terms in \eqref{Rprim3} do not depend on $t$. So, we may take $T_{1}>T_{0}$ such that
 \begin{equation*}
 \begin{split}
    C_0\frac{(n-4)}{2}T_{1}^{2} &\geq 2nE(\mathbf{u}_{0})+CR^{-2}Q(\mathbf{u}_{0})+CR^{\frac{-2(n-1)}{3}}Q(\mathbf{u}_{0})^{\frac{5}{3}},
 \end{split}
 \end{equation*}
 where $C_0$ is the constant appearing in  \eqref{Kquadrbon}. Thus,  \eqref{Kquadrbon}  and \eqref{Rprim3} give
\begin{equation*}
    \mathcal{R}'(t)\leq -\frac{(n-4)}{2}K(\mathbf{u}(t)), \quad t>T_{1}.
\end{equation*}
Now integrating the last inequality on $[T_{1}, t)$ gives
\begin{equation*}
\mathcal{R}(t)\leq -\frac{(n-4)}{2}\int_{T_{1}}^{t}K(\mathbf{u}(s))\; ds +\mathcal{R}(T_{1})\leq -\frac{(n-4)}{2}\int_{T_{1}}^{t}K(\mathbf{u}(s))\; ds.
\end{equation*}
Combining this  with \eqref{modulR} we get  
\begin{equation}\label{ineqK3}
\frac{(n-4)}{2}\int_{T_{1}}^{t}K(\mathbf{u}(s))\; ds\leq -\mathcal{R}(t)=|\mathcal{R}(t)|\leq CRQ(\mathbf{u}_{0})^{\frac{1}{2}}K(\mathbf{u})^{\frac{1}{2}}.
\end{equation}

Define $\displaystyle \eta(t):=\int_{T_{1}}^{t}K(\mathbf{u}(s))\;ds$ and $A:=\frac{(n-4)^{2}}{4C^{2}R^{2}Q(\mathbf{u}_{0})}$. From \eqref{Kquadrbon}, we have that $\eta(t)>0$ for $t>T_{1}$.  Thus \eqref{ineqK3} can be written as $A\leq\frac{\eta'(t)}{\eta^{2}(t)}$. Finally, taking $T_{1}<T'$ and integrating  on $[T',t)$ we obtain
\begin{equation*}
    A(t-T')\leq \int_{T'}^{t}\frac{\eta'(s)}{\eta^{2}(s)}\;ds=\frac{1}{\eta(T')}-\frac{1}{\eta(t)}\leq \frac{1}{\eta(T')},
\end{equation*}
or equivalently, 
\begin{equation*}
    0<\eta(T')\leq \frac{1}{ A(t-T')}.
\end{equation*}
Letting $t \to \infty$ we arrive to a contradiction. Hence the proof of Theorem \ref{thm:Blowupn6} is completed.
 \end{proof}

 \begin{obs}
 As we pointed out in Theorem \ref{thm:globalwellposH1}, if the inequality in \eqref{desKgs1n=5} is reversed then the solution exists globally in time. We believe if we reverse  inequality \eqref{desKgs1n=6} then the solution is also global. However, since this is the energy-critical case, much more efforts is needed. A possible technique to obtain the result is the one developed in \cite{killip}.
\end{obs}

\section*{Acknowledgement}

N.N. is partially supported by Universidad de Costa Rica, through the OAICE. 

A.P. is partially supported by CNPq/Brazil grants 402849/2016-7 and 303098/2016-3 and FAPESP/Brazil grant 2019/02512-5.

\end{document}